\def\xrightarrow{\mathop{\longrightarrow}\limits^}
\newcommand{\eqref}[1]{(\ref{#1})}
\def\mathbbm{\mathbh}
\newcommand{\R}{\mathbb{R}}
\newcommand{\E}{\mathbb{E}}
\newcommand{\Pb}{\mathbb{P}}
\newcommand{\Z}{\mathbb{Z}}
\newcommand{\N}{\mathbb{N}}
\newcommand{\1}{\mathbbm{1}}
\def\epsk{{\tilde\varepsilon_k}}
\def\laweq{\stackrel{\mathrm{law}}=}
\def\FIN{\mathrm{FIN}}
\def\FK{\mathrm{FK}}
\def\GFIN{\mathrm{SSBM}}
\def\IIC{\mathrm{IIC}}
\def\IPC{\mathrm{IPC}}
\def\Max{\mathrm{max}}
\def\comb{\mathrm{comb}}
\def\Leb{{\operatorname{Leb}}}
\def\supp{\mathop{\operatorname{supp}}}
\def\Id{{\mathrm{Id}}}
\newtheorem{theorem}{Theorem}[section]
\newtheorem{lemma}[theorem]{Lemma}
\newtheorem{proposition}[theorem]{Proposition}
\newtheorem{theoremm}{Theorem}[section]
\newtheorem{propositionn}[theoremm]{Proposition}
\begin{document}
\begin{frontmatter}

%\dochead{}
\title{Randomly trapped random walks}
\runtitle{Randomly trapped random walks}

\begin{aug}
\author[A]{\fnms{G\'erard} \snm{Ben Arous}\ead[label=e1]{benarous@cims.nyu.edu}},
\author[B]{\fnms{Manuel} \snm{Cabezas}\thanksref{T1}\corref{}\ead[label=e2]{mncabeza@mat.puc.cl}},
\author[C]{\fnms{Ji\v{r}\'{\i}}~\snm{\v{C}ern\'{y}}\ead[label=e3]{jiri.cerny@univie.ac.at}}
\and
\author[A]{\fnms{Roman}~\snm{Royfman}\ead[label=e4]{roman.royfman@gmail.com}}
\runauthor{Ben Arous, Cabezas, \v Cern\'y, Royfman}
\thankstext{T1}{Supported by Iniciativa Cient\'{i}fica Milenio NC120062 and Grant CNPq-PDJ 150897/2012-0.}
\affiliation{New York University, Instituto de Matem\'atica Pura e
Aplicada, University of Vienna and New York University}
\address[A]{G. Ben Arous\\
R. Royfman\\
Courant Institute\\
\quad of Mathematical Sciences\\
New York University\\
251 Mercer Street\\
New York, New York 10012\\
USA\\
\printead{e1}\\
\phantom{E-mail:\ }\printead*{e4}}%
\address[B]{M. Cabezas\\
Instituto de Matem\'atica Pura e Aplicada\\
Estrada Dona Castorina 110\\
Rio de Janeiro\\
Brazil\\
\printead{e2}}
\address[C]{J. \v{C}ern\'{y}\\
Faculty of Mathematics\\
University of Vienna\\
Oskar-Morgenstern-Platz 1\\
1090 Vienna\\
Austria\\
\printead{e3}}
%
%\author{\fnms{}~\snm{}\corref{}}
%\and
%\author{\fnms{}~\snm{}}
%\runauthor{}
%\affiliation{}
%\dedicated{}
%\address{} %adresu isvedimo komanda gale!
%\address{}
\end{aug}

% HISTORY:
\received{\smonth{3} \syear{2013}}
\revised{\smonth{2} \syear{2014}}
%\accepted{\smonth{} \syear{}}

% ABSTRACT

\begin{abstract}
We introduce a general model of trapping for random walks on graphs. We
give the possible scaling limits of these \textit{Randomly Trapped
Random Walks} on $\Z$. These scaling limits include the well-known
fractional kinetics process, the Fontes--Isopi--Newman singular diffusion
as well as a new broad class we call \textit{spatially subordinated
Brownian motions}. We give sufficient conditions for convergence and
illustrate these on two important examples.
\end{abstract}

% KEYWORDS
% Pirmas kwd is didziosios raides
%
\begin{keyword}[class=AMS]
\kwd[Primary ]{60K37}
\kwd{60G52}
\kwd[; secondary ]{60F17}
\end{keyword}
%
%\subjclass[2000]{60K37, 60G52, 60F17}
%
\begin{keyword}
\kwd{Bouchaud trap model}
\kwd{random walk}
\kwd{scaling limit}
\kwd{percolation}
\end{keyword}
%
%\begin{keyword}[class=AMS]
%\kwd[Primary ]{}
%\kwd{}
%\kwd[; secondary ]{}
%\end{keyword}
%\begin{keyword}
%\kwd{}
%\end{keyword}
\end{frontmatter}

%s1 #&#
\section{Introduction}\label{sec1} %<<<1

We present here a general class of trapping mechanisms for random walks.
This class includes the usual ``effective'' models of trapping, from the
Continuous Time Random Walks (CTRW) (see \cite{montrollweiss}), to the
Bouchaud Trap Models (BTM) (see
\cite{weakergodicitybreaking,phys,bouchaud-1995,BovierFaggionato05} and
\cite{bcnotes}). It is in fact much wider. This higher level of
generality is needed for the study of random walks on classical random
structures, where the trapping is not introduced {ab initio} as in
the CTRW or the BTM, but is created by the complexity of the underlying
geometry.
We introduce the class of models for general graphs, but
restrict the study in this paper to the case of the line $\Z$. We
obtain a rather complete understanding of the asymptotic behavior of
these trapped walks on $\Z$. We give first a description of all possible
scaling limits, and then proceed to give wide sufficient conditions for
convergence to each of the possible scaling limits. We illustrate this by
two simple examples, one effective and the other geometric, where we
exhibit a rich transition picture between those different asymptotic
regimes and scaling limits.

The behavior of these models in higher dimension or other graphs is open.
It seems clear that, when the underlying graph is transient, the
asymptotic behavior should be much simpler. One might even risk the
conjecture that, when the underlying graph is transient, the Brownian and
the fractional kinetics scaling limits obtained both for the CTRW or the
BTM, should be prevalent in general.

Consider a graph $G=(V,E)$, where $V$ denotes the set of vertices, and $E$
the set of edges. A general ``trapping landscape'' on the graph $G$ will be
given by a collection $\bolds{\pi}=(\pi_x)_{x \in V}$ of probability
measures on $(0,\infty)$. Consider now the continuous-time random process
$X:=(X_t)_{ t\geq0}$ defined on $V$ as follows: $X_t$ stays at a vertex
say $x \in V$, for a random duration sampled from the distribution $\pi_x$
and then moves on to one of the neighbors of $x$, chosen uniformly at
random. If the process $X$ visits $x$ again at a later time, the random
duration of this next visit at $x$ is sampled again and independently,
from the distribution $\pi_x$. We will call the process $X$ the trapped
random walk (TRW) defined by the trapping landscape
$\bolds{\pi}=(\pi_x)_{x \in V}$.

This structure contains the important and very well-studied class of
continuous time random walks (CTRW) as the simple particular case where
the trapping landscape is constant, that is, $\pi_x$ is independent of
$x \in V$. So, in particular the possible scaling limits, on the graph
$\Z^d$, include the Brownian Motion (BM) and the Fractional Kinetics (FK)
models (see \cite{mula}).

We will study in fact a much richer class of models, by considering the
case of random trapping landscapes, that is, the situation where the
landscape $(\pi_x)_{x \in V}$ is given as an i.i.d. sample of a
distribution on the space of probability measures on $(0,\infty)$. The
random collection $(\pi_x)_{x \in V}$ is now a random environment. We
have thus one extra layer of randomness and call the random process $X$
defined as above, for every fixed (or quenched) realization of
environment, a Randomly Trapped Random Walk (RTRW).

This richer class contains the Bouchaud Trap Model. This is the case
where the probability measures $\pi_x$ are chosen as exponential
distributions with mean $\tau(x) $, and the $\tau(x)$'s are chosen as
i.i.d. random variables in $(0,\infty)$. The scaling limits of this model
in dimension 2 and above include the Brownian motion and the fractional
kinetics models (see \cite{BCM06,BC07,Mourrat11,BC08}), and in dimension
one, the Fontes--Isopi--Newman (or FIN) singular diffusion (see
\cite{fin02,JaraLandimTexeira11} and also \cite{Cerny06}).

The new class of RTRWs also contains completely new examples which have
motivated this general study. These examples are of random walks in
random media, where the trapping mechanism is not imposed a priori, but
is a consequence of the geometric characteristics of the medium. For
instance, one of our main motivations is given by the random walk on an
incipient critical Galton--Watson tree (introduced by Kesten in
\cite{kesten}, see also \cite{BarlowKumagai2006}). This incipient
critical tree can be seen as made of a one-dimensional backbone, and of
very long dead-ends (in fact finite critical trees) attached to this
backbone. The trapping landscape is here of geometric origin: the
projection of the random walk along the backbone is trapped by the very
long sojourns in the dead-ends. We are also interested in the similar
problem of the random walk on the invasion percolation cluster on a
regular tree (see \cite{ipc}). These two examples will not be treated
here, but in a forthcoming work.

In this paper, we build the foundation by studying the general question
of understanding the scaling limits of our general class of RTRWs in
dimension one. We call this general class of limit processes Randomly
Trapped Brownian Motions (RTBMs). These processes are all obtained
through random time-changes of Brownian motion. The needed class of time
changes is rich and complex. The class of RTBMs contains naturally the
scaling limits of the examples mentioned above, that is, the Brownian motion,
the FK dynamics, and the FIN-diffusion. But it also contains very
interesting new processes, which we call Spatially Subordinated Brownian
Motions (SSBM). The class of geometric models mentioned above (the random
walk on the incipient critical tree and the invasion percolation
cluster) have scaling limits that belong to these new classes of
models, hence the necessity of the general study done here.

In order to begin the discussion about the asymptotic behavior of the
process $X$ that we have defined above, we remark that its structure is a
priori quite simple. It is given by a random time-change of the standard
discrete-time random walk, say $Y=(Y_n)_{n \geq0}$, on the graph $G$.
Indeed, we first define $S(n)$, the ``clock process,'' that is, the sum
of the
random trapping durations along the first $n$ steps of the random walk
$(Y_n)_{n \geq0} $. More precisely, consider an random array of
independent positive numbers $(s^k_x)_{k \geq1, x \in V}$ where for
every fixed vertex $x \in V$ the numbers $(s^k_x)_{k \geq1}$ are an
i.i.d. sample with common distribution $\pi_x$. Also, define $L(x,n)$
to be
the local time of the random walk $Y$, that is, the number of visits of the
site $x$ before (and including) time $n$.
%
%e1.1 #&#
\begin{equation}
\label{e:prertrwstart} L(x,n)= \sum_{k=0}^{n}
\1_{\{Y_k=x\}}.
\end{equation}
The clock process is simply defined as
%
%e1.2 #&#
\begin{equation}
S(n)= \sum_{k=0}^{n-1} s^{L(Y_k,k)}_{Y_k}
= \sum_{x\in G} \sum_{k=1}^{L(x,n-1)}
s_x^k.
\end{equation}
Then, clearly the process $X$ is the time change of the simple random
walk $Y$ by this random additive functional, that is,
%
%e1.3 #&#
\begin{equation}
\label{e:prertrw} X_t = Y_n \qquad\mbox{if } S(n) \leq t <
S(n+1).
\end{equation}

It is thus perfectly natural, at least when $G=\mathbb Z$, to expect
that the
possible scaling limits will be random time-changes of Brownian motion.
But it might not be obvious that the asymptotic behavior of the
time-change can be as rich as we find it to be. In the case of the FK
processes, it is clear that this time change is a stable subordinator, and
is independent of the underlying Brownian motion. In the case of the FIN
diffusion, this time-change is not independent of the underlying Brownian
motion and is very singular since it retains the randomness of the
spatial information contained in the traps.

In the general situation, the time-change will be even more complex. We
show in our first result (Theorem~\ref{t:scalinglimitclassification})
that the asymptotic behavior is in general a mixture of an FK type
situation, and of the new class of processes, the Spatially Subordinated
Brownian Motions (SSBM). These processes are again defined by a time
change of Brownian motion, where the time change retains some of the
randomness of the spatial information about deep traps, in a much more
intricate fashion than in the FIN case.

In order to illustrate this new class of processes, we also show in this
article that very simple models give rise to them, much simpler indeed
that the two geometric models mentioned above. We start with the simplest
of such models, which we call the model with ``transparent traps'':
Consider the
Bouchaud trap model with the following twist: at site $x \in\Z$ the
process $X$ can, with positive probability, ignore the trap. This model
exhibits different regimes where the scaling limits can be very
different. They include the Brownian motion, the FK dynamics, the FIN
diffusion and in a critical regime a new example of our wide class of
SSBMs. This model is interesting since, although very simple, it
contains this rich array of limiting behaviors and this new transition.
In fact it contains, in a very simple way, the main mechanism: the
possibility to ignore somewhat the deep traps.

As a next step, and building on this intuition, we give finally a
complete study of a simple geometric example, much closer to the cases of
the random walk on the incipient critical tree and invasion percolation
cluster. We study the random walk on comb models. This model is also
rich. If one add a drift toward the teeth of the comb, then various
regimes mentioned above are also present in this model.

%s2 #&#
\section{Statement of results}%<<<1
\label{s:results}
In this section, we provide precise statements of our results. We begin by
describing the processes that will later appear as possible scaling
limits of RTRW's on $\Z$. We will define first the Fractional Kinetics
processes, then introduce our new class of spatially subordinated Brownian
motions, and then specialize this definition to introduce the
Fontes--Isopi--Newman (or FIN) diffusion.

%
%de2.1 #&#
\begin{definition}[(Fractional kinetics)]
\label{d:fk}
Let $(B_t)_{t\geq0}$ be a standard one-dimensional Brownian motion and
let $(V^\alpha_t)_{t\geq0}$ be an $\alpha$-stable subordinator [for
some $\alpha\in(0,1)$] independent of $B$. Let
$\psi^\alpha_t:=\inf\{s\geq0\dvtx V_s^\alpha>t\}$. The fractional kinetics
process of index $\alpha$, $Z^\alpha$, is defined as
\[
Z^\alpha_t:=B_{\psi^\alpha_t}.
\]
\end{definition}

Next we define \textit{Spatially Subordinated Brownian Motions}
(SSBMs). Let $\mathfrak{F}^{\ast}$ be the set of Laplace exponents of
subordinators (i.e., of nondecreasing L\'evy processes), that is the
set of continuous functions
$f\dvtx \mathbb R_+\to\mathbb R_+$ that can be expressed as
%
%e2.1 #&#
\begin{equation}
\label{e:fdPi} f(\lambda)=f_{\mathtt d, \Pi}(\lambda):= \mathtt d\lambda+\int
_{\mathbb{R}_+}\bigl(1-e^{-\lambda t}\bigr)\Pi(dt)
\end{equation}
for a $\mathtt d\ge0$ and a measure $\Pi$ satisfying
$\int_{(0,\infty)}(1\wedge t)\Pi(dt)< \infty$. We endow $\mathfrak
F^\ast$
with topology of pointwise convergence and the corresponding Borel
$\sigma$-algebra.

Let $\mathbb{F}$ be a $\sigma$-finite measure on $\mathfrak{F}^{\ast}$
and let $(x_i,f_i)_{i\in\N}$ be a Poisson point process on
$\mathbb R\times\mathfrak F^\ast$ with intensity $dx \otimes\mathbb F$.
Let $(S_t^i)_{t\ge0}$, $i\in\N$, be a family of processes, such that,
conditioned on a realization of $(x_i,f_i)_{i\in\N}$, $(S^i)_{i\in\N
}$ is
distributed as an independent sequence of subordinators, where the
Laplace exponent of $S^i$ is given by $f_i$. We will assume that the
measure $\mathbb{F}$ satisfies the following assumption:
%
%e2.2 #&#
\begin{equation}
\label{e:assumptiononF} \sum_{i:x_i\in[0,1]}S_1^i<
\infty\qquad \mbox{almost surely.}
\end{equation}

Let $B$ be a one-dimensional standard Brownian motion started at the
origin, independent of the $(S^i)_{i\in\N}$, and $\ell(x,t)$ be its local
time. Define
%
%e2.3 #&#
\begin{equation}
\phi_t:=\sum_{i\in\N}S^i_{\ell(x_i,t)}
\end{equation}
and $\psi_t:=\inf\{s\geq0\dvtx \phi_s>t\}$.

%de2.2 #&#
\begin{definition}[(Spatially subordinated Brownian motion)]
\label{d:GFIN}
The process $B^{\mathbb{F}}$ defined as
\[
B^{\mathbb{F}}_t:=B_{\psi_t}
\]
is called an $\mathbb{F}$-spatially subordinated Brownian motion.
\end{definition}

%re2.3 #&#
\begin{remark}
Assumption \eqref{e:assumptiononF} ensures that $\phi_t$ is finite
for all $t\geq0$ and hence the $\mathbb{F}$-SSBM is well defined.
\end{remark}

The FIN diffusion is a particular case of a SSBM. It is in fact a
Markovian SSBM, which has been introduced as the scaling limit of the BTM
on $\Z$ in \cite{fin02}; see also \cite{BC05}. For every $v>0$, consider
the atomic measure $\delta_{f_v}$ concentrated on the linear function
$f_v(\lambda)= v\lambda$. For $\gamma\in(0,1)$, consider the
measure $\mathbb F$ on $\mathfrak F^\ast$ defined by
%
%e2.4 #&#
\begin{equation}
\mathbb F^{\gamma} = \int_0^\infty\gamma
v^{-1-\gamma} \delta_{f_v} \,dv.
\end{equation}

%de2.4 #&#
\begin{definition}[(Fontes--Isopi--Newman diffusion)]
\label{d:FIN}
For $\gamma\in(0,1)$, the $\mathbb{F}^{\gamma}$-SSBM is the FIN-diffusion
of index $\gamma$ ($\FIN_\gamma$).
\end{definition}

To see that this definition agrees with the usual one, it is sufficient
to observe that the L\'evy process $S_t$ corresponding to the Laplace
exponent $f_v$ satisfies $S_t=t v$, and thus
$\phi_t$ can be written as $\sum_{i} v_i \ell(x_i,t)$ for a Poisson
process $(x_i,v_i)$ on
$\mathbb R\times(0,\infty)$ with intensity $dx\, \gamma v^{-1-\gamma} \,dv$.

Finally, we will define processes which are constructed as mixtures of the
SSBM's and the FK-processes. Let $\mathbb{F}$ be a $\sigma$-finite
measure on $\mathfrak{F}^{\ast}$ satisfying \eqref{e:assumptiononF} and
$(x_i,f_i)_{i\ge0}$, $(S^i)_{i\in\N}$ be as in Definition~\ref{d:GFIN}.
Let $(V^{\gamma})_{t\geq0}$ be an $\gamma$-stable subordinator [for some
$\gamma\in(0,1)$] independent of the processes $(S^i)_{i\in\N}$,
and $B$ be a Brownian motion independent of the
$(S^i)_{i\in\N}$ and $V^\gamma$. Let $\ell(x,t)$ be the local time
of $B$.
Define
%
%e2.5 #&#
\begin{equation}
\phi_t:=\sum_{i\in\N}S^i_{\ell(x_i,t)}+V^{\gamma}_t
\end{equation}
and $\psi_t:=\inf\{s\geq0\dvtx \phi_s>t\}$.

%de2.5 #&#
\begin{definition}[(FK-SSBM mixture)]
\label{d:FKSSBM}
The process $(B_{\psi_t})_{t\geq0}$ is called an FK-SSBM mixture.
\end{definition}

%re2.6 #&#
\begin{remark}
Note that the SSBM and the FK-processes are both particular cases of
FK-SSBM mixtures. The SSBM is obtained by taking $V^\gamma\equiv0 $
(i.e., the ``trivial'' $\gamma$-stable subordinator), and the FK process is
recovered by taking $\mathbb F$ to be a zero measure.
\end{remark}

%re2.7 #&#
\begin{remark}
We make here a small digression and describe the results of the
companion paper \cite{BenCab14} about the Random Walk on the Incipent
Infinite Cluster and the Random Walk on the Invasion Percolation Cluster.
As shown by Kesten in \cite{kesten}, the IIC on a regular tree is
composed of a single infinite path, called the \emph{backbone} from
which there emerge finite branches. These random branches are
independent and distributed as critical branching trees.
Let $W^{\IIC}$ be the projection on the backbone of a simple random
walk on the IIC. Since the backbone is one-dimensional, $W^{\IIC}$ can
be seen as a random walk on $\N$ with random jump times, where each
branch of the IIC represents a trap.

In the companion paper \cite{BenCab14}, we study the scaling limit of
$W^{\IIC}$ and show how it can be obtained using the tools developed
here. More precisely, it is easy to see that only the largest traps
will be relevant in the large time behavior of $W^{\IIC}$. On the
other hand, in \cite{AldousCRT3}, Aldous showed that the scaling limit
of critical trees conditioned on being large is the \emph{Continuum
Random tree} (CRT).
Moreover, as shown by Croydon in \cite{rwrt}, the random walk on those
large, critical trees scales to the Brownian motion on the CRT.
The time that $W^{\IIC}$ spends on a large trap is given by the
inverse local time at the root of the Brownian motion on the CRT.
More specifically, we prove that the scaling limit of $W^{\IIC}$ is a
spatially subordinated Brownian motion $B^\mathbb F$, where $\mathbb F$
is related to the law of the Laplace exponent of the inverse local time
at the root of the Brownian motion on the CRT.

The case of the Invasion Percolation Cluster (IPC) is similar. The IPC
also can be seen as a one-dimensional backbone adorned with finite
branches. In this case, the branches are distributed as subcritical
percolation trees, where the percolation parameter converges to the
critical value as we advance along the backbone; see~\cite{ipc}.
Let $W^{\IPC}$ denote the projection on the backbone of a random walk
in the IPC. In \cite{BenCab14}, we study the scaling limit of $W^{\IPC
}$. Moreover, we will see that it is not the same as that of $W^{\IIC
}$, although being very similar. This scaling limit is not exactly an
SSBM but a very slight modification of one.

To get the results described above, we will make use of general
convergence criteria deduced in the present article.
\end{remark}

%s2.1 #&#
\subsection{Classification theorem}
The first result we present is a classification theorem which
characterizes the set of limiting processes of RTRWs with an
i.i.d. trapping landscape.

Consider $P\in M_1(M_1((0,\infty)))$ [i.e., $P$ is a probability
measure on the space of probability measures on $(0,\infty)$]. Let
$\bolds\pi $ be the corresponding i.i.d. trapping landscape, that
is an i.i.d. sequence $\bolds\pi= (\pi_z)_{z\in\mathbb Z}$,
$\pi_z\in M_1((0,\infty))$ with marginal $P$ defined on a probability
space $(\Omega, \mathcal F, \mathbb P)$. Given a realization of
$\bolds\pi$, let $(s_x^i)_{x\in\mathbb Z, i\ge1}$ be an
independent collection of random variables such that $s_x^i$ has
distribution $\pi_x$, and let $X$ be the RTRW whose random trapping
landscape is $\bolds\pi$, defined as in
\eqref{e:prertrwstart}--\eqref{e:prertrw}. We write $P^{\bolds\pi}$
for the law of $X$ given $\bolds\pi$. The distribution of $X$ is
then the
semidirect product $\mathbb P\times P^{\bolds\pi}$.

%th2.8 #&#
\begin{theorem}
\label{t:scalinglimitclassification}
Assume that there is a nondecreasing function $\rho$ such that the
processes
%
%e2.6 #&#
\begin{equation}
X^\varepsilon_t = \varepsilon X_{\rho(\varepsilon)^{-1}t},\qquad t\ge0,
\end{equation}
converge as $\varepsilon\to0$ in
$(\mathbb P\times P^{\bolds\pi})$-distribution on the space
$D(\mathbb R_+)$ of
cadlag functions endowed with Skorokhod topology to a process $U$
satisfying the nontriviality assumption
%
%e2.7 #&#
\begin{equation}
\limsup_{t\to\infty}|U_t|=\infty\qquad \mbox{almost
surely.}
\end{equation}
Then one
of the two following possibilities occurs:

\begin{longlist}[(ii)]
\item[(i)] $\rho(\varepsilon)= \varepsilon^2 L(\varepsilon)$ for a
function $L$ slowly varying at $0$. Then there exists $c>0$ such that
$U_t=(B_{c^{-1}t})_{t\geq0}$ where $B$ is a standard Brownian motion.

\item[(ii)] $\rho(\varepsilon) = \varepsilon^\alpha L(\varepsilon
)$ for
$\alpha>2$ and a function $L$ slowly varying at $0$. Then
$U$ is a
FK-SSBM mixture $(B_{\psi_t})_{t\geq0}$. Moreover, index $\gamma$ of
the $\gamma$-stable subordinator associated to $B_{\psi_t}$ equals
$2/\alpha$ and the intensity measure $\mathbb F$ satisfies the
scaling relation
%
%e2.8 #&#
\begin{equation}
\label{e:Fscaling} a \mathbb F(A)= \mathbb F\bigl(\sigma_a^\alpha
A\bigr)\qquad \mbox{for every }A\in\mathcal B\bigl(\mathfrak F^\ast\bigr),
a>0,
\end{equation}
where
$\sigma_a^\alpha\dvtx  \mathfrak F^\ast\to\mathfrak F^\ast$ is
defined by
%
%e2.9 #&#
\begin{equation}
\label{e:sigma} \sigma_a^\alpha(f) (\lambda) = a f
\bigl(a^{-\alpha} \lambda\bigr).
\end{equation}
\end{longlist}
\end{theorem}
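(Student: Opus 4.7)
The plan is to pass the scaling limit through the representation $X_t = Y_n$ for $S(n)\le t < S(n+1)$, and reduce the problem to identifying the scaling limit of the clock process $S$. Since the simple random walk $Y$ on $\Z$ is well understood, everything depends on the joint limit of $Y$ and $S$ (and in particular on the coupling through the local times).

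First I would set $\phi^\varepsilon_s := \rho(\varepsilon)S(\lfloor \varepsilon^{-2} s\rfloor)$ and $\psi^\varepsilon$ its right-continuous inverse, so that $X^\varepsilon_t = \varepsilon Y_{\lfloor\varepsilon^{-2}\psi^\varepsilon_t\rfloor}$. Since $\varepsilon Y_{\lfloor\varepsilon^{-2}s\rfloor}\Rightarrow B_s$ and $X^\varepsilon\Rightarrow U$ non-trivially, a standard composition/continuous-mapping argument for non-decreasing time changes (Whitt-type) forces $\phi^\varepsilon$ to converge jointly with $Y^\varepsilon$ to some strictly increasing, a.s.\ finite process $\phi$ with $\limsup \phi_s=\infty$, and $U_t = B_{\psi_t}$ with $\psi$ the inverse of $\phi$. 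Tightness of $\phi^\varepsilon$ can be obtained from the tightness of $X^\varepsilon$ using that $\phi$ controls the amount of real time accumulated by $X$ during a fixed amount of walker time.

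Next I would show that $\rho$ is regularly varying and that the only two possibilities are $\alpha=2$ and $\alpha>2$. The diffusive scaling of $Y$ means that $\phi^\varepsilon_s/s$ is, on average, $\rho(\varepsilon)\cdot\bar m_\varepsilon$ where $\bar m_\varepsilon$ is the truncated mean holding time. Self-similarity of the limit $\phi$ (inherited from the scaling ansatz and the stationarity of $\boldsymbol \pi$) combined with monotonicity then gives $\rho(\varepsilon)=\varepsilon^\alpha L(\varepsilon)$ with $\alpha\ge 2$; any $\alpha<2$ would force $U$ to explode. If $\alpha=2$, then $\phi^\varepsilon_s$ converges to a deterministic linear function $cs$ (by the ergodic theorem applied along the walk, using that $\pi_z$ has finite integral on the relevant scale), giving $U_t=B_{c^{-1}t}$ as in case (i).

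The substantive case is $\alpha>2$. Here I would rewrite
\begin{equation}
  \phi^\varepsilon_s = \sum_{z\in\Z}\rho(\varepsilon)\sum_{k=1}^{L(z,\lfloor\varepsilon^{-2}s\rfloor-1)} s_z^k
\end{equation}
and split the sum into contributions of \emph{typical} sites (whose rescaled holding-time distributions concentrate on compact sets of $\mathfrak F^*$) and contributions of \emph{atypical} sites (whose rescaled holding-time distributions escape to infinity in $\mathfrak F^*$). The spatial i.i.d.\ structure together with the diffusive scaling $x\mapsto \varepsilon x$ lets me apply the classical Poisson limit theorem: the point process of atypical rescaled sites converges to a Poisson point process $(x_i,f_i)$ on $\R\times \mathfrak F^*$ with intensity $dx\otimes \mathbb F$ for some $\sigma$-finite $\mathbb F$. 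Conditional on this point process, the accumulated time at site $x_i$ becomes the value at local time $\ell(x_i,s)$ of an independent subordinator with Laplace exponent $f_i$; this is where the coupling to $B$ through $\ell$ survives the limit and produces the SSBM term $\sum_i S^i_{\ell(x_i,s)}$. The contribution of typical sites, being a spatial sum of nearly i.i.d.\ terms summed along $Y$, self-averages and, if non-trivial, converges to a stable subordinator $V^\gamma_s$ independent of $B$ by a functional CLT for heavy-tailed arrays; comparing scaling exponents forces $\gamma=2/\alpha$. Together this gives $\phi_s=\sum_i S^i_{\ell(x_i,s)}+V^\gamma_s$, i.e.\ an FK-SSBM mixture.

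The scaling relation \eqref{e:Fscaling} then follows from the fact that the limit $\phi$ must be invariant under replacing $\varepsilon$ by $a\varepsilon$. Under this substitution the spatial rescaling contributes the factor $a$ on the left of \eqref{e:Fscaling}, whereas on the right the rescaling of holding times by $\rho(a\varepsilon)/\rho(\varepsilon)\sim a^\alpha$ transforms each Laplace exponent $f$ into $\sigma_a^\alpha f$ as in \eqref{e:sigma}. The hard part is Step~3, namely the simultaneous control of the Poissonization of atypical sites and the law-of-large-numbers for typical sites, and, crucially, showing that the coupling between $\phi$ and $B$ through the local times $\ell(x_i,\cdot)$ is retained in the limit; this requires a careful joint tightness argument for $(Y^\varepsilon, L^\varepsilon, \phi^\varepsilon)$ and continuity of the map $(\ell,(f_i,x_i))\mapsto \sum_i S^i_{\ell(x_i,\cdot)}$ in the appropriate topology.
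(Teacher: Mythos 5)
Your proposal takes a genuinely different route from the paper, and the route you sketch has a gap at exactly the step you yourself flag as the hard one.

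The paper does \emph{not} analyze the clock process directly via a typical/atypical site split. Instead it reduces everything to a classification of \emph{random measures}: Lemma~\ref{l:converse} shows that non-trivial convergence of $X^\varepsilon$ forces the rescaled trap measures $\mu^\varepsilon = \rho(\varepsilon)\mathfrak S_\varepsilon(\mu)$ to be relatively compact; Lemma~\ref{l:clas} then classifies all their subsequential limits. The engine of Lemma~\ref{l:clas} is Kallenberg's representation theorem for \emph{separately exchangeable} random measures on $\mathbb H$ (Theorem~\ref{t:Kallenberg}): any limit of $\mu^\varepsilon$ inherits separate exchangeability from the i.i.d.\ trapping landscape, and Kallenberg's theorem gives an explicit decomposition into a Lebesgue component, a planar Poisson component (which becomes FK), spatial-atom components (which become FIN/SSBM), and cross terms. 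The i.i.d.\ structure, via independence of $\nu\langle A_1\rangle$ and $\nu\langle A_2\rangle$ for disjoint $A_1,A_2$, then kills the cross terms and the $\alpha$-dependence, and scale-invariance plus local finiteness pins down the functions $l,g,h$ and forces the scaling index to be either exactly $2$ (Brownian) or strictly greater than $2$ (FK-SSBM). This representation theorem is what makes the classification exhaustive.

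Your proposal replaces this with the heuristic that the clock should split into a Poissonized deep-trap part plus a self-averaging heavy-tailed part, and invokes the ``classical Poisson limit theorem'' and a ``functional CLT for heavy-tailed arrays.'' But those results apply under tail hypotheses on $P$, and the theorem you are proving assumes none; the only input is that \emph{some} scaling limit exists. Nothing in your argument rules out, say, a limit that is exchangeable in $x$ but whose $y$-direction structure is not a pure sum of independent subordinators indexed by spatial atoms, nor does it rule out correlations across the ``typical''/``atypical'' decomposition (which, as you define it by compactness in $\mathfrak F^*$, is not even well posed without further assumptions on $P$). You correctly recognize this as ``the hard part'' but do not supply the missing structural ingredient. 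The Kallenberg representation is precisely the tool that makes the classification closed, and without it your argument proves, at best, that the FK-SSBM mixtures are \emph{among} the possible limits, not that they are the \emph{only} ones. The remaining pieces of your plan --- the regular variation of $\rho$, the derivation of $\gamma=2/\alpha$, and the scaling relation \eqref{e:Fscaling} --- are all in the right spirit and align with the paper's scaling arguments, but they sit downstream of the classification and do not repair the gap.
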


%re2.9 #&#
\begin{remark}
\label{r:scalingremark}
The map $\sigma_a^\alpha$ maps the Laplace exponent of a L\'evy process
$V$ to the Laplace exponent of the L\'evy process
$a^{-\alpha} V(a \cdot)$.
\end{remark}

%\begin{remark}
% The constant $c$ and the measure $\mathbb F$ in the previous theorem
% are fully determined by the $\alpha, \rho$ and $\nu$. See
% Section~\ref{ss:classification}.
%\end{remark}

%s2.2 #&#
\subsection{Convergence theorems}
We now present sufficient conditions for the convergence to the processes
described above. Let $X$ be, as above, a RTRW with i.i.d. trapping
landscape whose marginal is $P\in M_1(M_1((0,\infty)))$.

%s2.2.1 #&#
\subsubsection{Convergence to Brownian motion}
We start by presenting general criteria for the convergence to the
Brownian motion. For any probability measure $\nu\in M_1((0,\infty
))$, we
define $m(\nu)$ to be its mean,
%
%e2.10 #&#
\begin{equation}
\label{e:defmean} m(\nu)= \int_{\mathbb R_+} x \nu(dx).
\end{equation}

%th2.10 #&#
\begin{theorem}
\label{t:BMconvergence}
Assume that
%
%e2.11 #&#
\begin{equation}
M:=\int m(\pi) P(d\pi)\in(0,\infty).
\end{equation}
Then $\mathbb P$-a.s., as $\varepsilon\to0$, the
rescaled RTRW
$(\varepsilon X_{M^{-1}\varepsilon^{-2}t})_{t\ge0}$ converges
to a standard Brownian motion, in $P^{\bolds\pi}$-distribution
on the space
$D(\R_+)$.
\end{theorem}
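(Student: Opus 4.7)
The plan is to decompose the problem into a spatial part, handled by Donsker's invariance principle for the underlying simple random walk $Y$, and a temporal part, consisting of a quenched law of large numbers for the clock process $S$. Since $X_t = Y_{N(t)}$ with $N(t):=\max\{n:S(n)\le t\}$, and since the time-change operation is continuous at limits whose clock is continuous and strictly increasing (a standard Skorokhod-topology fact), it suffices to show that for $\mathbb P$-a.e.~$\boldsymbol\pi$ the rescaled clock
\begin{equation*}
S^{\varepsilon}(s) := \varepsilon^2 S(\lfloor \varepsilon^{-2}s\rfloor)
\end{equation*}
converges in $P^{\boldsymbol\pi}$-probability, uniformly on compact subsets of $\mathbb R_+$, to the deterministic linear function $s\mapsto Ms$. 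Combined with Donsker's convergence $\varepsilon Y_{\lfloor\varepsilon^{-2}s\rfloor}\to B$, this yields the convergence of the time-changed process to the appropriately speed-changed Brownian motion.

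The crux of the matter is this quenched clock-LLN. I would write $S(n)=\bar S(n)+R(n)$, where
\begin{equation*}
\bar S(n):=\sum_{x\in\Z} L(x,n-1)\,m(\pi_x)=\sum_{k=0}^{n-1}m(\pi_{Y_k})
\end{equation*}
is the conditional mean of $S(n)$ given $(\boldsymbol\pi,Y)$, and $R(n):=S(n)-\bar S(n)$ collects the remaining fluctuations. For the main term, $\bar S(n)-Mn=\sum_x L(x,n-1)(m(\pi_x)-M)$ is a weighted sum of the i.i.d.~mean-zero variables $m(\pi_x)-M$; because SRW on $\Z$ has range of order $\sqrt n$ and local times of order $\sqrt n$, heuristically this sum is $O(n^{3/4})$ when $m(\pi)$ has finite variance. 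Since we only assume $\mathbb E[m(\pi)]<\infty$, I would truncate $m(\pi_x)$ at a level $K$: a Chebyshev bound, using $\sum_x L(x,n-1)^2=O(n^{3/2})$ under $P^{\boldsymbol\pi}$, controls the bounded part, while Markov's inequality applied to the annealed expectation $\mathbb E[(m(\pi)-K)_+]\to 0$ handles the tail. A countable diagonal over $K\in\N$ and over rational times $s\in\mathbb Q_+$ reduces all the a.s.~statements to a common $\mathbb P$-null set.

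For the remainder $R(n)$, conditioned on $(\boldsymbol\pi,Y)$ the summands $s_x^j-m(\pi_x)$ are independent and centered but may have infinite conditional variance, so the same truncation $s_x^j\wedge K$ is applied. The truncated centered sum is controlled via Kolmogorov's inequality using that the total number of summands is $n$, while the positive tail has annealed expectation of order $n\cdot\mathbb E[\int_K^\infty x\,\pi(dx)]$, made small by first choosing $K$ large and then letting $\varepsilon\to 0$. Pointwise convergence in $s$ upgrades to uniform convergence on compacts thanks to the monotonicity of $S^\varepsilon$ and the continuity of the limit (a Dini-type argument).

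The principal obstacle is performing these truncation arguments \emph{quenchedly}, i.e.~fixing $\boldsymbol\pi$ outside a single $\mathbb P$-null set independent of the truncation parameter $K$ and the time $s$. This is managed by extracting convergence along a countable dense collection of parameters and then upgrading by monotonicity and a countable union of null sets. A secondary technical point, joint Skorokhod convergence of $(\varepsilon Y_{\lfloor\varepsilon^{-2}\cdot\rfloor},S^\varepsilon)$, is automatic because the second coordinate converges in probability to a deterministic continuous function.
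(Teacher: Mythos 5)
Your approach is correct in outline but takes a genuinely different route from the paper's. The paper's argument is very short: it applies the multidimensional individual ergodic theorem (in two dimensions, indexed by the site $z$ and the visit number $i$) directly to the i.i.d.\ array $(s_z^i)_{z,i\in\Z}$, concluding that the rescaled trap measure $\varepsilon^2\mathfrak S_\varepsilon(\mu)$ converges $\Pb\otimes\tilde\Pb$-a.s.\ to $M\cdot\Leb_{\mathbb H}$, and then invokes Theorem~\ref{t:delayedrwconv}, which turns vague convergence of trap measures into convergence of the corresponding time-changed processes. Your decomposition $S=\bar S+R$ amounts to doing this two-dimensional Cesaro average by hand: $\bar S$ handles the spatial ($z$) direction via a random-walk-in-random-scenery estimate, $R$ handles the visit ($i$) direction via Kolmogorov's inequality, and Donsker's theorem plus continuity of the time-change replaces the general time-change machinery of Section~\ref{s:convergence}. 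Both routes are sound; yours avoids the abstract random-measure framework at the cost of more explicit bookkeeping. One place where your sketch is too cavalier: ``Markov's inequality applied to the annealed expectation'' gives a single probability bound but does not by itself produce a quenched statement over all $n$. What you actually want for the tail term $\frac{1}{n}\sum_x L(x,n)(m(\pi_x)-K)_+$ is the spatial SLLN for the i.i.d.\ scenery $(m(\pi_x)-K)_+$ (valid for $\mathbb P$-a.e.~$\boldsymbol\pi$, for each fixed $K$), combined with local-time control of the walk (both the range and $\max_x L(x,n)$ are of order $\sqrt n$ under $P^{\boldsymbol\pi}$), which bounds the tail contribution by a constant multiple of $\mathbb E[(m(\pi_0)-K)_+]$ with high $P^{\boldsymbol\pi}$-probability; your diagonalization over $K\in\N$ then finishes. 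This is a fixable omission rather than a structural flaw, and the same remark applies to the tail of the $R$ term, where the relevant scenery is $\int_{(K,\infty)}(u-K)\pi_x(du)$.
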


%re2.11 #&#
\begin{remark}
Observe that Theorem~\ref{t:BMconvergence} is a quenched result: the
convergence holds for $\mathbb P$-a.e. realization of the trapping
landscape $\bolds\pi$.
\end{remark}

%s2.2.2 #&#
\subsubsection{Convergence to the Fractional Kinetics process}

We now deal with the convergence to the FK process. Let, as usual, $X$
be a RTRW with i.i.d. trapping landscape $\bolds\pi$ whose
marginal is $P$. We write
%
%e2.12 #&#
\begin{equation}
\hat\pi(\lambda):= \int_0^\infty e^{-\lambda t}
\pi(dt)
\end{equation}
for the Laplace transform of a probability measure over
$(0,\infty)$, and set
%
%e2.13 #&#
\begin{equation}
\label{e:FKGamma} \Gamma(\varepsilon):=\mathbb E\bigl[1-\hat\pi_0 (
\varepsilon)\bigr].
\end{equation}
It is easy to see that $\Gamma $ is strictly increasing on $\mathbb R_+$,
taking values in $[0,\Gamma_\Max)$ for some $0< \Gamma_\Max\leq1$.
Therefore, the inverse $\Gamma^{-1}$ is well defined on this interval.
For $\varepsilon$ small enough, we can thus introduce the inverse time
scale $q_{\FK}$ by
%
%e2.14 #&#
\begin{equation}
\label{e:qFK} q_{\FK}(\varepsilon)=\Gamma^{-1}\bigl(
\varepsilon^{2}\bigr).
\end{equation}

%th2.12 #&#
\begin{theorem}
\label{t:FKconv}
Assume that
%
%e2.15 #&#
\begin{equation}
\label{e:qFKcond} q_\FK(\varepsilon) = \varepsilon^{\alpha} L(
\varepsilon)
\end{equation}
for some $\alpha>2$ and a slowly varying function $L$. In addition
assume that
%
%e2.16 #&#
\begin{equation}
\label{e:FKsecondmoment} \lim_{\varepsilon\to0} \varepsilon^{-3} \mathbb
E \bigl[ \bigl(1-\hat{\pi}_0\bigl(q_{\FK}(\varepsilon)
\bigr) \bigr)^{2} \bigr] =0.
\end{equation}
Then, as $\varepsilon\to0$, the rescaled RTRW
$(\varepsilon X_{q_{\FK}(\varepsilon)^{-1}t})_{t\ge0}$ converges in
$P^{\bolds\pi}$-distri\-bution on $D(\mathbb R_+)$ to the FK
process with parameter $\gamma=2/\alpha$, in $\Pb$-probability.

In addition, if $\varepsilon^{-3}$ in \eqref{e:FKsecondmoment} is
replaced by $\varepsilon^{-4-\delta}$, $\delta>0$, then the
convergence in $P^{\bolds\pi}$-distribution holds $\Pb$-a.s.
\end{theorem}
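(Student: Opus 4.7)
The strategy is to use the time-change decomposition
\begin{equation*}
  \varepsilon X_{q_{\FK}(\varepsilon)^{-1}t} = \tilde Y^\varepsilon\bigl(\tilde N^\varepsilon(t)\bigr), \qquad \tilde Y^\varepsilon_s := \varepsilon Y_{\lfloor\varepsilon^{-2}s\rfloor}, \qquad \tilde S^\varepsilon(s) := q_{\FK}(\varepsilon)S(\lfloor\varepsilon^{-2}s\rfloor),
\end{equation*}
where $\tilde N^\varepsilon$ is the right-continuous inverse of $\tilde S^\varepsilon$. Since, by Definition~\ref{d:fk}, the FK process is a time-change of Brownian motion by the inverse of an \emph{independent} $\gamma$-stable subordinator, it suffices to prove the joint $P^{\boldsymbol\pi}$-convergence $(\tilde Y^\varepsilon,\tilde S^\varepsilon)\Rightarrow(B,V^\gamma)$ in $\Pb$-probability, with $B$ and $V^\gamma$ independent and $\gamma=2/\alpha$; a standard composition lemma (the limiting time-change being strictly increasing and continuous) then transfers this to the time-changed process in $D(\R_+)$. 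Donsker already handles the first marginal, so the task is to identify $\tilde S^\varepsilon$ together with its asymptotic independence from the walk.

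To this end I would compute the conditional Laplace transform of $\tilde S^\varepsilon(t)$ given $Y$ and $\boldsymbol\pi$; by independence of the trapping clocks it factors as
\begin{equation*}
  \E\bigl[e^{-\lambda \tilde S^\varepsilon(t)}\bigm|Y,\boldsymbol\pi\bigr] = \prod_{x\in\Z}\hat\pi_x\bigl(\lambda q_{\FK}(\varepsilon)\bigr)^{L_x}, \qquad L_x := L(x,\lfloor\varepsilon^{-2}t\rfloor-1).
\end{equation*}
Setting $\Psi_\varepsilon(x):=1-\hat\pi_x(\lambda q_{\FK}(\varepsilon))$ and using $-\log(1-u)=u+O(u^2)$, this product is well approximated by $\exp(-\sum_x L_x\Psi_\varepsilon(x))$. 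From the defining identity $\Gamma(q_{\FK}(\varepsilon))=\varepsilon^{2}$ and assumption \eqref{e:qFKcond}, $\Gamma$ is regularly varying at $0$ of index $\gamma$, so $\E[\Psi_\varepsilon(0)]=\Gamma(\lambda q_{\FK}(\varepsilon))\sim\lambda^{\gamma}\varepsilon^{2}$. Since $\sum_x L_x=\lfloor\varepsilon^{-2}t\rfloor-1$, the annealed mean of $\sum_x L_x\Psi_\varepsilon(x)$ converges to $\lambda^\gamma t$, which is precisely the Laplace exponent of $V^\gamma$ at $\lambda$.

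The main obstacle, and the reason for hypothesis \eqref{e:FKsecondmoment}, is controlling the fluctuations of $\sum_x L_x\Psi_\varepsilon(x)$ around its mean. Because the $\Psi_\varepsilon(x)$ are i.i.d.~and independent of $Y$,
\begin{equation*}
  \Var\Bigl(\sum_x L_x\Psi_\varepsilon(x)\,\Bigm|\,Y\Bigr)\leq\Bigl(\sum_x L_x^2\Bigr)\E\bigl[\Psi_\varepsilon(0)^2\bigr],
\end{equation*}
and a direct self-intersection computation gives $\E_Y[\sum_x L_x^2]=\sum_{i,j\le\varepsilon^{-2}t}\Pb(Y_i=Y_j)\asymp\varepsilon^{-3}$. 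The elementary monotonicity bound $1-\hat\pi(\lambda\theta)\leq\max(\lambda,1)(1-\hat\pi(\theta))$ absorbs the $\lambda$-dependence, so the annealed variance is at most $C(\lambda)\varepsilon^{-3}\E[(1-\hat\pi_0(q_{\FK}(\varepsilon)))^2]=o(1)$ by \eqref{e:FKsecondmoment}. Chebyshev then yields $\sum_x L_x\Psi_\varepsilon(x)\to\lambda^\gamma t$ in $\Pb\otimes P^{\boldsymbol\pi}$-probability; exponentiating and using bounded convergence on $\E_Y[\,\cdot\,]$ produces convergence of the quenched Laplace transform of $\tilde S^\varepsilon(t)$ to the deterministic limit $e^{-\lambda^\gamma t}$, in $\Pb$-probability. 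The fact that this limit does not depend on $Y$ simultaneously identifies $V^\gamma$ and establishes its independence of $B$. Analogous computations for increments yield finite-dimensional convergence; tightness of $\tilde S^\varepsilon$ in $D(\R_+)$ is immediate from monotonicity, and the composition step completes the in-probability statement.

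For the quenched a.s.~statement I would run a Borel--Cantelli argument along a dyadic subsequence $\varepsilon_k=2^{-k}$: the strengthened hypothesis $\varepsilon^{-4-\delta}\E[(1-\hat\pi_0(q_{\FK}(\varepsilon)))^2]\to 0$ makes the variance bound of the previous paragraph of order $o(\varepsilon_k^{1+\delta})$, hence summable, giving a.s.~convergence of the Laplace transforms at the dyadic scales. Monotonicity of $\tilde S^\varepsilon$ in its time argument, combined with continuity of the scaling in $\varepsilon$, then fills in intermediate values of $\varepsilon$ and upgrades the a.s.~convergence to the full limit.
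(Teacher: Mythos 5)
Your proposal takes a genuinely different route from the paper. The paper never works directly with the clock process: it reduces, via Theorem~\ref{t:delayedrwconv} and Proposition~\ref{p:Levymeasureconv}, to showing vague convergence of the rescaled \emph{trap measure} $q_{\FK}(\varepsilon)\mathfrak S_\varepsilon(\mu)$ to $\mu_\FK^\gamma$, and then checks this on rectangles $[x_1,x_2]\times[y_1,y_2]$, where the relevant quantity is a sum of the i.i.d.~waiting times $s_x^j$ over a deterministic block of indices -- the random walk $Y$ disappears entirely from the estimate, and the $\varepsilon^{-3}$ in \eqref{e:FKsecondmoment} emerges as (prefactor)$^2\times$(number of $x$-terms) $=\varepsilon^{-2}\cdot\varepsilon^{-1}$. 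You instead condition on $Y$, sum over local times $L_x$, and obtain the same $\varepsilon^{-3}$ from the one-dimensional self-intersection estimate $\E_Y[\sum_x L_x^2]\asymp\varepsilon^{-3}$. Both lead to a Chebyshev/law-of-large-numbers argument with identical second-moment input; your version makes the random-walk structure visible but requires you to re-derive by hand the composition and independence facts that the paper's trap-measure machinery packages once and for all. Your argument also glosses over the truncation step needed to control the $O(u^2)$ error in $-\log(1-u)$ uniformly; the paper handles this explicitly via the supremum bound \eqref{e:FKsupremum}, since the implicit constant blows up as $u\to 1$. This is repairable, but should be addressed.

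The one genuine gap is in your a.s.~argument: the dyadic subsequence $\varepsilon_k=2^{-k}$ cannot be filled in. For $\varepsilon\in(\varepsilon_{k+1},\varepsilon_k)$ the exact identity
\begin{equation*}
\tilde S^\varepsilon(t)=\frac{q_{\FK}(\varepsilon)}{q_{\FK}(\varepsilon_k)}\,\tilde S^{\varepsilon_k}\!\Bigl(\frac{\varepsilon_k^2}{\varepsilon^2}t\Bigr)
\end{equation*}
shows that interpolation requires the ratios $q_{\FK}(\varepsilon_{k+1})/q_{\FK}(\varepsilon_k)$ and $\varepsilon_{k+1}/\varepsilon_k$ to tend to $1$. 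With dyadic spacing they converge to $2^{-\alpha}$ and $1/2$ respectively (since $q_{\FK}$ is regularly varying of index $\alpha$), so the sandwich between $\tilde S^{\varepsilon_k}$ and $\tilde S^{\varepsilon_{k+1}}$ degenerates to a window of relative width $2^\alpha-2^{-\alpha}$ rather than collapsing; ``monotonicity in $t$ plus continuity in $\varepsilon$'' does not squeeze this. You need a polynomially-spaced subsequence $\varepsilon_k=k^{-1+\delta/2}$, for which the summability under the strengthened hypothesis still holds (the variance is $o(\varepsilon_k^{1+\delta})=o(k^{-(1-\delta/2)(1+\delta)})$, summable for $\delta<1$, and a smaller $\delta'\le\delta$ can always be substituted) and for which the consecutive ratios tend to $1$, so the interpolation closes. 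This is exactly the choice made in the paper's proof.
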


%re2.13 #&#
\begin{remark}
Due to \eqref{e:qFK}, \eqref{e:qFKcond} is equivalent to
%
%e2.17 #&#
\begin{equation}
\label{e:qFKcondeq} \Gamma(\varepsilon)=\varepsilon^{2/\alpha }\tilde L(
\varepsilon) =\varepsilon^\gamma\tilde L(\varepsilon),
\end{equation}
for some slowly varying function $\tilde L$.
\end{remark}

%s2.2.3 #&#
\subsubsection{Convergence to spatially subordinated Brownian motions}
Here, we present sufficient conditions for the convergence to the SSBM
processes introduced in Definition~\ref{d:GFIN}. We assume that $X$ is
a RTRW with an i.i.d. random trapping landscape
$\bolds\pi=(\pi_z)_{z\in\Z}$ with marginal
$P\in M_1(M_1((0,\infty)))$.

We recall that $m(\nu)$ denotes the mean of the probability
distribution $\nu$; see~\eqref{e:defmean}.
Our first assumption is that the distribution of $m(\pi_0)$ has heavy tails.

%as1 #&#
\begin{assumption*}[(HT)]
\label{a:PP}
There exists $\gamma\in(0,1)$ and a nonvanishing slowly varying
function at infinity $L\dvtx \R_+\to\R_+$ such that
%
%e2.18 #&#
\begin{equation}
P\bigl[\pi\in M_1\bigl((0,\infty)\bigr)\dvtx m(\pi)>u
\bigr]=u^{-\gamma}L(u).
\end{equation}
\end{assumption*}

%re2.14 #&#
\begin{remark}
\label{rk:assumptionpp}
We define
$V \in D(\mathbb R)$ by
%
%e2.19 #&#
\begin{equation}
\label{e:VV} V_x= \cases{ \displaystyle\sum_{i=1}^{\lfloor x \rfloor}
m(\pi_i), &\quad $x\ge1$,\vspace *{2pt}
\cr
0,&\quad $x\in[0,1)$,\vspace*{2pt}
\cr
\displaystyle\sum_{i=\lfloor x \rfloor+1}^0 m(\pi_i),
&\quad $x<0$.}
\end{equation}
Under Assumption (HT), there exists a function $d(\varepsilon)$
satisfying\break
$d(\varepsilon)=\varepsilon^{-1/\gamma}\tilde L(\varepsilon)$ for a
function $\tilde L$ slowly varying at $0$, such that\break
$(d(\varepsilon)^{-1}V_{\varepsilon^{-1}x})_{x\in\R}$ converges in
distribution on $D(\R)$ to a (two-sided) $\gamma$-stable
subordinator with L\'evy measure $\gamma v^{-1-\gamma} \,dv$. In addition,
we may assume that $d$ is strictly decreasing and continuous.
\end{remark}

Next, we prepare the statement of the second assumption. For each
$a\in\R_+$, let $\pi^a$ be a random measure having the law of $\pi_0$
conditioned on $m(\pi_0)=a$. Let
%
%e2.20 #&#
\begin{equation}
\label{e:defq} q(\varepsilon):=\varepsilon d(\varepsilon)^{-1},
\end{equation}
where $d$ is as in Remark~\ref{rk:assumptionpp}.

For $\varepsilon>0$, define
$\Psi_{\varepsilon}\dvtx M_1((0,\infty))\to C(\mathbb{R}_+)$ by
%
%e2.21 #&#
\begin{equation}
\label{e:defPsi} \Psi_{\varepsilon}(\nu) (\lambda):=\varepsilon^{-1}
\bigl(1-\hat{\nu}\bigl(q(\varepsilon)\lambda\bigr)\bigr),\qquad \nu \in M_1
\bigl((0,\infty)\bigr), \lambda\ge0.
\end{equation}
Observe that $\Psi_{\varepsilon}(\nu)$ is the Laplace exponent of a pure
jump L\'evy process whose jumps have intensity $\varepsilon^{-1}$ and the
size of jumps divided by $q(\varepsilon)$ has distribution $\nu$. In
particular, $\Psi_\varepsilon(\nu)\in\mathfrak F^\ast$ for every
$\nu\in M_1((0,\infty))$. Our second assumption is:

%as2 #&#
\begin{assumption*}[(L)]
There exists $\mathbb{F}_1\in M_1(\mathfrak F^\ast)$ such that
%
%e2.22 #&#
\begin{equation}
\label{e:assumptionD0} \mbox{law of }\Psi_{\varepsilon}\bigl(\pi^{d(\varepsilon)}\bigr)
\xrightarrow{\varepsilon\to0} \mathbb{F}_1.
\end{equation}
In addition, $\mathbb F_1$ is nontrivial, that is,
%
%e2.23 #&#
\begin{equation}
\label{e:Fnontrivial} \mathbb F_1\neq\delta_{\mathbf{0}},
\end{equation}
where $\mathbf{0}$ is the identically zero function.
\end{assumption*}

%re2.15 #&#
\begin{remark}
\label{r:fboudedness}
Observe that $\Psi_\varepsilon(\pi^{d(\varepsilon)})$ is a Laplace
exponent of a subordinator $S$ such that
$\mathbb E[S_1]=\varepsilon^{-1}d(\varepsilon)q(\varepsilon)=1$. The
measure $\mathbb F_1$ thus gives the full mass to the set
$\mathfrak F\subset\mathfrak F^\ast$ of functions
$f\dvtx \mathbb{R}_+\rightarrow\mathbb{R}$ that can be written as
$f(\lambda)= \mathtt d \lambda + c \int(1-e^{-\lambda t})\Pi(dt)$
for $\mathtt d + c \le1$ and $\Pi$ satisfying
$\int_{\mathbb{R}_+}t\Pi(dt)=1$. In particular, $f(\lambda) \le
\lambda$.
\end{remark}

%th2.16 #&#
\begin{theorem}
\label{t:RSBMconv}
Assume that \textup{(HT)} and \textup{(L)} hold. Then, as $\varepsilon\to0$,
$(\varepsilon X_{q(\varepsilon)^{-1}t})_{t\ge0}$ converges on
$D(\mathbb R_+)$ in $\mathbb P\times P^{\bolds\pi}$-distribution
to a SSBM process $(B^{\mathbb F}_t)_{t\ge0}$ introduced in Definition~\ref{d:GFIN}. The intensity measure $\mathbb F$ which determines the
law of the limiting process is given by
%
%e2.24 #&#
\begin{equation}
\label{e:GFINF} \mathbb F(d f):= \int_{0}^\infty
\gamma v^{-\gamma-1} \mathbb F_v(df) \,dv,
\end{equation}
where [recall \eqref{e:sigma} for the notation]
%
%e2.25 #&#
\begin{equation}
\label{e:Fv} \mathbb F_v:= \mathbb F_1 \circ
\sigma_{v^{\gamma}}^{1+1/\gamma}.
\end{equation}
\end{theorem}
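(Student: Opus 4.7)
The plan is to follow the standard time-change scheme: write $X_t = Y_{n(t)}$, where $Y$ is the underlying simple random walk and $n(\cdot)$ is the right-inverse of the clock process $S$ from \eqref{e:prertrwstart}--\eqref{e:prertrw}. Set
\begin{equation*}
Y^\varepsilon_s := \varepsilon Y_{\lfloor \varepsilon^{-2} s\rfloor}, \qquad \bar S^\varepsilon(s) := q(\varepsilon) S(\lfloor \varepsilon^{-2} s\rfloor),
\end{equation*}
so that $\varepsilon X_{q(\varepsilon)^{-1}t} = Y^\varepsilon_{\psi^\varepsilon_t}$, where $\psi^\varepsilon$ is the right-inverse of $\bar S^\varepsilon$. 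Donsker's theorem, jointly with convergence of the rescaled local time $\bar L^\varepsilon(x,s) := \varepsilon L(\lfloor \varepsilon^{-1}x\rfloor, \lfloor \varepsilon^{-2}s\rfloor)$ to the Brownian local time $\ell(x,s)$, handles $Y^\varepsilon$. The task thus reduces to the joint annealed convergence of $\bar S^\varepsilon$ to the process $\phi$ of Definition~\ref{d:GFIN}, after which a standard inverse-time-change argument delivers $\varepsilon X_{q(\varepsilon)^{-1}\cdot} \Rightarrow B^{\mathbb F}$ in the Skorokhod topology.

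\textbf{Trap landscape as a point process.} The first main step is to prove that the random measure
\begin{equation*}
\Xi_\varepsilon := \sum_{z \in \mathbb Z} \delta_{(\varepsilon z,\, \Psi_\varepsilon(\pi_z))} \qquad \text{on } \mathbb R \times \mathfrak F^\ast
\end{equation*}
converges in distribution, in the vague topology away from the identically zero exponent $\boldsymbol 0$, to a Poisson point process with intensity $dx \otimes \mathbb F$. Assumption (HT) and the choice of $d$ yield that the positions of sites with $m(\pi_z)/d(\varepsilon) \in dv$, rescaled by $\varepsilon$, form asymptotically a PPP with intensity $\gamma v^{-\gamma-1}\,dv\,dx$. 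Conditionally on such a site sitting at height $v$, the scaling $d(\varepsilon/v^\gamma) \sim v\, d(\varepsilon)$ combined with Assumption (L) identifies the limit of $\Psi_\varepsilon(\pi^{vd(\varepsilon)})$ as the push-forward of $\mathbb F_1$ under $\sigma_{v^{-\gamma}}^{1+1/\gamma}$, which is $\mathbb F_v$ (cf.\ \eqref{e:Fv}). Integrating in $v$ reproduces \eqref{e:GFINF}. The bound $\Psi_\varepsilon(\pi) \le \lambda$ from Remark~\ref{r:fboudedness} ensures that the (numerous) shallow traps charge no test set bounded away from $\boldsymbol 0$.

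\textbf{Clock convergence, inversion, and obstacle.} Conditionally on $(Y, \boldsymbol\pi)$, $\bar S^\varepsilon(s)$ is a sum of independent nonnegative increments and
\begin{equation*}
-\log \mathbb E\big[e^{-\lambda \bar S^\varepsilon(s)}\bigm| Y, \boldsymbol\pi\big] = -\sum_{x\in\mathbb Z} L(x, \lfloor \varepsilon^{-2}s\rfloor)\,\log\!\big(1 - \varepsilon\Psi_\varepsilon(\pi_x)(\lambda)\big).
\end{equation*}
Expanding the logarithm to first order, the right-hand side is well approximated by $\sum_x \bar L^\varepsilon(\varepsilon x, s)\,\Psi_\varepsilon(\pi_x)(\lambda)$. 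The joint convergence $(\bar L^\varepsilon, \Xi_\varepsilon) \Rightarrow (\ell, \Xi)$, legitimate because the walk $Y$ is independent of the landscape $\boldsymbol \pi$, then passes this sum to $\int \ell(x,s) f(\lambda)\,\Xi(dx,df) = \sum_i \ell(x_i,s)\,f_i(\lambda)$, the conditional Laplace exponent of $\phi_s$ given $(B, \Xi)$. The hard part will be the simultaneous control of the $O\big(\varepsilon^2 \Psi_\varepsilon(\pi_x)^2\big)$ remainder in the log expansion and of the aggregate contribution of moderately-deep traps to $\bar S^\varepsilon$: here the bound $\Psi_\varepsilon(\pi)\le\lambda$ together with the $\gamma$-stable tail from (HT) provide the required uniform truncation, though executing this in the annealed setting is the technical crux. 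Once $(\bar S^\varepsilon, Y^\varepsilon) \Rightarrow (\phi, B)$ is established jointly, the almost sure strict increase of $\phi$ at its inversion times (guaranteed by the continuity of the Brownian local time and the jumps of the $S^i$) allows the inverse time-change to pass to the Skorokhod limit, giving $\varepsilon X_{q(\varepsilon)^{-1}\cdot} \Rightarrow B^{\mathbb F}_\cdot$.
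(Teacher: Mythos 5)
Your strategy is sound and is a genuinely different organization from the paper's. The paper works through the trap-measure machinery: it reduces the theorem to vague convergence in distribution of the scaled trap measures $q(\varepsilon)\mathfrak S_\varepsilon(\mu)$ (Theorem~\ref{t:delayedrwconv} and Proposition~\ref{p:Levymeasureconv}), and establishes that convergence by an explicit Skorokhod coupling (Lemma~\ref{l:GFINcoupling}) under which the position, the rescaled mean, and $\Psi_\varepsilon(\pi_z)$ all converge $\bar\Pb$-a.s., with the convergence of Laplace exponents over the deep traps handled via the matching-of-jumps property of $J_1$ and Lemma~\ref{l:onetrap}. You bypass the trap-measure apparatus entirely, instead converting the question into convergence of a marked point process $\Xi_\varepsilon$ on $\mathbb R\times\mathfrak F^\ast$ plus convergence of the clock $\bar S^\varepsilon$; the identification of the intensity $\mathbb F$ via the scaling $d(\varepsilon/v^\gamma)\sim v\,d(\varepsilon)$ and Assumption (L) is the same computation that appears inside Lemma~\ref{l:onetrap}, so the two proofs share their engine. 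What the paper's framing buys is reuse of the general Theorem~\ref{t:delayedrwconv} (so the inversion step is done once for all TRW limits); what yours buys is a more self-contained presentation that exposes the PPP structure of the limit directly.

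There is, however, a genuine gap exactly where you say there is one, and it cannot be hand-waved. To pass $\sum_x \bar L^\varepsilon(\varepsilon x,s)\Psi_\varepsilon(\pi_x)(\lambda)$ to $\sum_i \ell(x_i,s)f_i(\lambda)$, you need to kill the aggregate contribution of the shallow traps, i.e.~to show that, in the annealed sense and uniformly as $\varepsilon\to 0$,
\begin{equation*}
\sum_{z:\, m(\pi_z)<\delta' d(\varepsilon),\ \varepsilon z\in I}\varepsilon\,L\big(z,\lfloor\varepsilon^{-2}s\rfloor\big)\,\Psi_\varepsilon(\pi_z)(\lambda)
\end{equation*}
is $O(\lambda\delta)$ with $\delta\to 0$ as $\delta'\to 0$. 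The paper proves precisely this after \eqref{e:malasuma}: the bound $\Psi_\varepsilon(\pi_z)(\lambda)\le\lambda\,m(\pi_z)/d(\varepsilon)$ together with the $J_1$-convergence of the rescaled partial-sum process $V^\varepsilon$ of means (Remark~\ref{rk:assumptionpp}) controls $\sum_z m(\pi_z)\mathbf 1\{m(\pi_z)<\delta' d(\varepsilon)\}$ over $\varepsilon^{-1}I$. Your sketch invokes ``the bound $\Psi_\varepsilon(\pi)\le\lambda$ from Remark~\ref{r:fboudedness}'' for this, which is imprecise: that remark gives $f(\lambda)\le\lambda$ only for $f\in\supp\mathbb F_1$, i.e.~for $\pi$ conditioned on $m(\pi)=d(\varepsilon)$, and the correct general inequality is $\Psi_\varepsilon(\pi)(\lambda)\le\lambda\,m(\pi)/d(\varepsilon)$, which is favorable (small) for shallow traps but useless for deep ones. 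You need that form, plus the functional convergence of $V^\varepsilon$, to close the argument. Finally, because $\bar S^\varepsilon$ is a step function, the convergence to $\phi$ should be stated in $M_1$ rather than $J_1$ (the paper is careful about this), before passing to the inverse.
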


%re2.17 #&#
\begin{remark}
Observe that the scaling relation \eqref{e:Fscaling} is satisfied for
$\mathbb F$ in~\eqref{e:GFINF} and $\alpha= 1 +\frac{1} \gamma$.
Indeed, since
$\sigma^{1+1/\gamma}_a \sigma^{1+1/\gamma}_b = \sigma^{1+1/\gamma}_{ab}$,
for any $A\in\mathcal B(\mathfrak F^\ast)$,
%
%e2.26 #&#
\begin{eqnarray}
\mathbb F\bigl(\sigma^{1+1/\gamma}_a A\bigr) &=& \int\gamma
v^{-1-\gamma} \mathbb F_v\bigl(\sigma^{1+1/\gamma}_a
A\bigr) \,dv
\nonumber
\\
&=& \int\gamma v^{-1-\gamma} \mathbb F_1\bigl(\sigma
^{1+1/\gamma}_{av^\gamma} A\bigr) \,dv
\\
&=& a \int\gamma u^{-1-\gamma} \mathbb F_1\bigl(\sigma
^{1+1/\gamma}_u A\bigr) \,du = a \mathbb F(A).\nonumber
\end{eqnarray}
\end{remark}

%s2.2.4 #&#
\subsubsection{Convergence to the FIN diffusion}
Next, we present a theorem which gives sufficient conditions for
convergence to the FIN diffusion. Recall that $m(\nu)$ denotes the
expectation $\nu\in M_1((0,\infty))$, and define
$m_2(\nu)=\int_{\R_+} t^2   \nu(dt)$ to be its second moment. As before,
we let $\pi^a$ stand for a random measure having the distribution of
$\pi_0$ given $m(\pi_0)=a$. Define random variable $m_2(a):=m_2(\pi^a)$.

%th2.18 #&#
\begin{theorem}
\label{t:FINconv}
Assume that \textup{(HT)} holds and let $d(\varepsilon)$ be as in
Remark~\ref{rk:assumptionpp}. In addition, let
$\varepsilon d(\varepsilon)^{-2}m_2(d(\varepsilon))\xrightarrow
{\varepsilon\to0} 0$
in distribution. Then\break $(\varepsilon^{-1}X_{q(\varepsilon)t})_{t\geq0}$
converges to the $\FIN_{\gamma}$ diffusion in the sense of Theorem~\ref{t:RSBMconv}.
\end{theorem}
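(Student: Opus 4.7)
The plan is to deduce this theorem directly from Theorem~\ref{t:RSBMconv} by verifying Assumption (L) with $\mathbb F_1$ equal to the point mass $\delta_{f_1}$ on the linear Laplace exponent $f_1(\lambda )=\lambda $, and then identifying the resulting intensity~\eqref{e:GFINF} with the FIN intensity $\mathbb F^\gamma $ of Definition~\ref{d:FIN}. Assumption (HT) is already among the hypotheses, so this reduction suffices.

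For the first step, I would expand $\Psi_\varepsilon (\pi^{d(\varepsilon )})(\lambda )$ using the elementary inequalities $0\le \mu t-(1-e^{-\mu t})\le \tfrac12(\mu t)^2$ valid for $\mu ,t\ge 0$. Since $m(\pi^{d(\varepsilon )})=d(\varepsilon )$ by the definition of the conditional law $\pi^a$, and $\varepsilon^{-1}q(\varepsilon )d(\varepsilon )=1$ by~\eqref{e:defq}, this gives the decomposition
\begin{equation*}
  \Psi_\varepsilon (\pi^{d(\varepsilon )})(\lambda )
  = \lambda - R_\varepsilon (\lambda ),
  \qquad
  0\le R_\varepsilon (\lambda )\le \tfrac12\lambda^2\,\varepsilon\, d(\varepsilon )^{-2} m_2(d(\varepsilon )).
\end{equation*}
The hypothesis is exactly that the random prefactor $\varepsilon d(\varepsilon )^{-2} m_2(d(\varepsilon ))$ vanishes in distribution, so for every fixed $\lambda \ge 0$ the random variable $\Psi_\varepsilon (\pi^{d(\varepsilon )})(\lambda )$ tends to $\lambda $ in probability.

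To upgrade this pointwise-in-probability statement to convergence in law of $\Psi_\varepsilon (\pi^{d(\varepsilon )})$ as a random element of $\mathfrak F^\ast $ equipped with the pointwise topology, I would invoke that this topology is generated by the evaluation maps $f\mapsto f(\lambda )$, so convergence in distribution on $\mathfrak F^\ast $ reduces to joint convergence of the finite-dimensional marginals $(f(\lambda_1 ),\dots,f(\lambda_k ))$. Pointwise-in-probability convergence to the deterministic limit function $f_1$ passes automatically to joint convergence of any finite family, so~\eqref{e:assumptionD0} holds with $\mathbb F_1=\delta_{f_1}$, and the nontriviality~\eqref{e:Fnontrivial} is immediate since $f_1\ne\boldsymbol 0$.

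For the final step, I would plug $\mathbb F_1=\delta_{f_1}$ and $\alpha =1+1/\gamma $ into~\eqref{e:Fv}: this gives $\mathbb F_v=\delta_{g_v}$, where $g_v$ is the unique Laplace exponent with $\sigma_{v^\gamma }^{1+1/\gamma }(g_v)=f_1$. A short computation from~\eqref{e:sigma} yields $g_v(\lambda )=v\lambda =f_v(\lambda )$, hence
\begin{equation*}
  \mathbb F=\int_0^\infty \gamma v^{-1-\gamma }\delta_{f_v}\,dv=\mathbb F^\gamma ,
\end{equation*}
and Theorem~\ref{t:RSBMconv} then produces convergence to the $\mathbb F^\gamma $-SSBM, which by Definition~\ref{d:FIN} is the $\FIN_\gamma $ diffusion. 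The only real subtlety is the verification of (L): the quadratic remainder estimate is what makes the second-moment hypothesis sharp, and one must note that because the candidate limit $f_1$ is deterministic, the "in distribution" nature of the hypothesis on $m_2(d(\varepsilon ))$ is automatically strong enough to conclude convergence in probability. Everything else is algebra with the scaling map $\sigma_a^\alpha $.
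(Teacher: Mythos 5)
Your proposal is correct and takes essentially the same approach as the paper: verify Assumption~(L) with $\mathbb F_1=\delta_{\lambda\mapsto\lambda}$ via the two-sided Taylor bound on $1-e^{-x}$, use $m(\pi^{d(\varepsilon)})=d(\varepsilon)$ and $\varepsilon^{-1}q(\varepsilon)d(\varepsilon)=1$ to reduce the remainder to the hypothesized quantity, and then invoke Theorem~\ref{t:RSBMconv}. The paper leaves the identification of~\eqref{e:GFINF} with $\mathbb F^\gamma$ and the upgrade to distributional convergence on $\mathfrak F^\ast$ implicit, whereas you spell them out; these extra details are accurate and do not change the route.
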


We conclude the \hyperref[sec1]{Introduction} with a description of the organization of
the paper. In Section~\ref{s:examples}, we will define two examples of
RTRWs for which we will prove convergence results. First, we will define
the transparent traps model and we will state the theorem which describes
its phase diagram (see Theorem~\ref{t:phasediagramtransparenttraps}).
Then we will define the comb model and we will present
Theorem~\ref{t:phasediagramcombmodel} which deals with its possible
scaling limits.

Sections~\ref{s:delayed} and \ref{s:rtrwrtbm} contain the main
definitions which will be used through the paper. In
Section~\ref{s:delayed}, we give the precise definitions of trapped random
walks and trapped Brownian motions. In Section~\ref{s:rtrwrtbm}, we give
the definitions and examples of randomly trapped random walks and
randomly trapped Brownian motions. In Section~\ref{s:convergence}, we
prove a general result from which one can deduce convergence of trapped
processes from the convergence of their respective \textit{trap measures}.

The bulk of the paper is Section~\ref{s:limits} where we deal with limits
of RTRWs. In Section~\ref{ss:classification}, we prove the
classification of the all possible limits of RTRWs with i.i.d. trapping
landscape stated in Theorem~\ref{t:scalinglimitclassification}. In
Section~\ref{ss:BMconvergence}, we prove Theorem~\ref{t:BMconvergence}
which deals with the convergence to the Brownian motion. The convergence
to the FK process stated in Theorem~\ref{t:FKconv} will be proved in
Section~\ref{ss:FKconvergence}. In Section~\ref{ss:GFINconvergence},
we will prove the convergence to the SSBM stated in
Theorem~\ref{t:RSBMconv}. In Section~\ref{ss:FINconvergence}, we prove
Theorem~\ref{t:FINconv} which states the convergence to the FIN diffusion.

Finally, Section~\ref{s:applications} deals with the proof of the
theorems for the transparent traps model and the comb model. In
Section~\ref{ss:faketrap}, we will prove
Theorem~\ref{t:phasediagramtransparenttraps} and in
Section~\ref{ss:combmodel} we will prove Theorem~\ref{t:phasediagramcombmodel}.

The \hyperref[app]{Appendix} collects, for the reader's convenience, several known results from
the random measure theory that are used through the paper.

%s3 #&#
\section{Examples}
\label{s:examples}

In this section, we define two examples of RTRWs. We also present the
theorems which describe their phase-diagrams.

%s3.1 #&#
\subsection{Transparent traps model}
\label{ss:transparenttraps}
The simplest model which we will treat is the \textit{trap model with
transparent traps}. Let $\alpha,\beta>0$, and let
$(\tau_x)_{x\in\mathbb{Z}}$ be a i.i.d. sequence of positive random
variables which satisfy
%
%e3.1 #&#
\begin{equation}
\label{tail} \lim_{u\to\infty}u^{\alpha}\mathbb{P}(
\tau_0>u)=c\in (0,\infty),
\end{equation}
and $\Pb(\tau_x>1)=1$. For each $x\in\mathbb{Z}$, consider the random
probability distribution
$\pi_x:=(1-\tau_x^{-\beta})\delta_1 +\tau_x^{-\beta}\delta_{\tau_x}$.

%de3.1 #&#
\begin{definition}[(Trap model with transparent traps)]\label{d:faketrap}
Let $X$ be the RTRW with random trapping landscape $(\pi_x)_{x\in\Z
}$. Then $X$ is the called the trap model with transparent traps.
\end{definition}

%f1 #&#
\begin{figure}[b]

\includegraphics{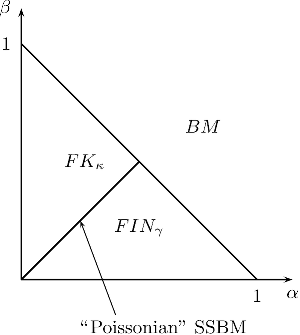}

\caption{Phase diagram for the transparent traps model.}
\label{figure 1}
\end{figure}

The reason for this name is the following. When $X$ reaches
$x\in\mathbb Z$, it is trapped there for time $\tau_x$ with probability
$\tau_x^{-\beta}$, otherwise it does not ``see'' the trap and just stays
at $x$ for a unit of time. The phase-diagram of the transparent traps
model (see Figure \ref{figure 1}) is given by the following theorem.

%th3.2 #&#
\begin{theorem}\label{t:phasediagramtransparenttraps}
The trap model with transparent traps has the following scaling
behavior:
\begin{longlist}[(iii)]
\item[(i)]
If $\alpha+\beta>1$, then for
$m:=\mathbb E (m(\pi_0))<\infty$, the process
$\varepsilon X_{m\varepsilon^{-2} t}$ converges
to a standard
Brownian motion in the sense of Theorem~\ref{t:BMconvergence}.

\item[(ii)]
If $\alpha+\beta<1$ and $\alpha>\beta$, then for
$\gamma= \alpha/(1-\beta)$ and
$q(\varepsilon)=\varepsilon^{1+1/\gamma}$, the process
$\varepsilon X_{q(\varepsilon)^{-1}t}$ converges
to $\FIN_{\gamma}$ in the sense of Theorem~\ref{t:RSBMconv}.

\item[(iii)]
If $\alpha+ \beta<1$ and $\alpha<\beta$, then for
$\kappa= \alpha+ \beta$ and
$q(\varepsilon)= \varepsilon^{2/\kappa}$, the process
$\varepsilon X_{q(\varepsilon)^{-1}t}$ converges to a
fractional kinetics process with parameter $\kappa$
in the sense of Theorem~\ref{t:FKconv}.

\item[(iv)]
If $\alpha+ \beta<1$ and $\alpha=\beta$, then for
$q(\varepsilon)=\varepsilon^{1/\alpha}$ the process
$\varepsilon X_{q(\varepsilon)^{-1}t}$ converges,
in the sense of Theorem~\ref{t:RSBMconv}, to a SSBM process, which will
be referred as a ``Poissonian'' SSBM.
\end{longlist}
\end{theorem}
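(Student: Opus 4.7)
The plan is to verify the hypotheses of Theorems~\ref{t:BMconvergence},~\ref{t:FINconv},~\ref{t:FKconv},~\ref{t:RSBMconv} case by case. All the required asymptotics reduce to elementary computations on the two-atom measure
\begin{equation*}
  \pi_x=(1-\tau_x^{-\beta})\delta_1+\tau_x^{-\beta}\delta_{\tau_x},
\end{equation*}
namely the mean $m(\pi_x)=1-\tau_x^{-\beta}+\tau_x^{1-\beta}$, the second moment $m_2(\pi_x)=1-\tau_x^{-\beta}+\tau_x^{2-\beta}$, and the Laplace transform
\begin{equation*}
  \hat\pi_x(\lambda)=(1-\tau_x^{-\beta})e^{-\lambda}+\tau_x^{-\beta}e^{-\lambda\tau_x}.
\end{equation*}
For $\beta<1$ the map $\tau\mapsto m(\pi(\tau))$ is strictly increasing on $[1,\infty)$ with inverse satisfying $\tau(a)\sim a^{1/(1-\beta)}$ as $a\to\infty$, so the conditional measure $\pi^a$ is the deterministic two-atom measure built from $\tau(a)$, and the tail~\eqref{tail} translates into $\Pb(m(\pi_0)>u)\sim c'u^{-\alpha/(1-\beta)}$.

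For case~(i), this tail is integrable precisely when $\alpha+\beta>1$ (and when $\beta\ge 1$, $m(\pi_0)$ is already bounded), so $M=\mathbb E[m(\pi_0)]<\infty$ and Theorem~\ref{t:BMconvergence} applies. For case~(ii), the same tail yields Assumption~(HT) with $\gamma=\alpha/(1-\beta)\in(0,1)$. The inverse relation gives $m_2(d(\varepsilon))\sim d(\varepsilon)^{(2-\beta)/(1-\beta)}$, and substituting $d(\varepsilon)\sim\varepsilon^{-1/\gamma}$ yields $\varepsilon d(\varepsilon)^{-2}m_2(d(\varepsilon))\asymp\varepsilon^{1-\beta/\alpha}$, which tends to $0$ iff $\alpha>\beta$. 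Theorem~\ref{t:FINconv} then delivers the $\FIN_\gamma$ limit.

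Cases~(iii) and~(iv) both rest on a two-scale decomposition of the ``heavy'' contribution $\tau^{-\beta}(1-e^{-\lambda\tau})$, split at $\tau\sim 1/\lambda$ and combined with~\eqref{tail}. For case~(iii), this gives $\mathbb E[\tau_0^{-\beta}(1-e^{-\varepsilon\tau_0})]\sim C\varepsilon^{\alpha+\beta}$, which dominates the ``transparent'' contribution $\mathbb E[(1-\tau_0^{-\beta})(1-e^{-\varepsilon})]\sim\varepsilon$ because $\alpha+\beta<1$. Hence $\Gamma(\varepsilon)\sim C\varepsilon^{\alpha+\beta}$, $q_\FK(\varepsilon)\sim c'\varepsilon^{2/(\alpha+\beta)}$, and the FK index is $\kappa=\alpha+\beta$. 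The same splitting applied to $(1-\hat\pi_0(\lambda))^2$ produces a dominant contribution of order $\lambda^{\alpha+2\beta}$, making the left-hand side of~\eqref{e:FKsecondmoment} of order $\varepsilon^{(\beta-\alpha)/(\alpha+\beta)}$, which vanishes iff $\beta>\alpha$; Theorem~\ref{t:FKconv} then concludes. In case~(iv), the critical tuning $\alpha=\beta$ forces $q(\varepsilon)\tau(d(\varepsilon))\to 1$ and $\varepsilon^{-1}\tau(d(\varepsilon))^{-\beta}\to 1$, so substitution into~\eqref{e:defPsi} yields the pointwise limit
\begin{equation*}
  \Psi_\varepsilon(\pi^{d(\varepsilon)})(\lambda)
  \xrightarrow{\varepsilon\to 0}1-e^{-\lambda},
\end{equation*}
the Laplace exponent of a unit-rate Poisson process. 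Assumption~(L) then holds with $\mathbb F_1$ the Dirac mass at the function $\lambda\mapsto 1-e^{-\lambda}$, which is non-trivial, and Theorem~\ref{t:RSBMconv} delivers the Poissonian SSBM limit with intensity measure $\mathbb F$ given by~\eqref{e:GFINF}.

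The main analytic step is the two-scale splitting at $\tau\sim 1/\lambda$ underlying cases~(iii) and~(iv): it is exactly this estimate that separates the transparent from the heavy trapping mechanism and exposes the phase boundary $\alpha+\beta=1$, while the secondary comparison $\alpha$ vs.~$\beta$ within the heavy regime selects between FIN (rare very-long traps dominate), FK (many shorter traps dominate), and the Poissonian SSBM (exact balance). Case~(iv) is the most delicate since the limit $\mathbb F_1$ depends on the detailed shape of $\pi_x$ rather than just its moments, but the explicit two-atom form of $\pi_x$ makes the identification $\mathbb F_1=\delta_{1-e^{-\cdot}}$ transparent once the scaling in $d(\varepsilon)$ and $q(\varepsilon)$ is correctly calibrated.
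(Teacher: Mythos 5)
Your proposal is correct and follows essentially the same strategy as the paper: verify the hypotheses of Theorems~\ref{t:BMconvergence}, \ref{t:RSBMconv}/\ref{t:FINconv}, and~\ref{t:FKconv} via explicit asymptotics of the two-atom conditional measure $\pi^{d(\varepsilon)}$ and of $\Gamma(\varepsilon)$. The only minor deviations are that you keep the $\delta_1$ atom (the paper replaces it by $\delta_0$ to streamline the formulas, noting the asymptotics are unaffected), and that for case~(ii) you invoke Theorem~\ref{t:FINconv} via the second-moment criterion rather than reading off $\Psi_\varepsilon(\pi^{d(\varepsilon)})(\lambda)\to\lambda$ directly as the paper does; both routes are equivalent and lead to the same calculation.
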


%re3.3 #&#
\begin{remark}
In the case $\alpha+ \beta=1$ which is not covered by the theorem, the
scaling limit is a Brownian motion, but a logarithmic correction should
be added to the scaling. We do not consider this case here for the
sake of
brevity.
\end{remark}

%s3.2 #&#
\subsection{Comb model}
\label{ss:combpre}
The comb model is a ``geometric''
RTRW on a graph that looks like a comb with
randomly long teeth. More precisely, consider an i.i.d. family $N_z$,
$z\in\mathbb Z$, satisfying
%
%e3.2 #&#
\begin{equation}
\label{tailcombb} \Pb(N_0=n)=\mathcal Z^{-1}
n^{-1-\alpha},\qquad n\geq1
\end{equation}
for some $\alpha>0$ and a normalizing constant
$\mathcal Z = \mathcal Z(\alpha)$. Let $G_z$ be the graph with vertices
$\{(z,0),(z,1),\ldots,(z,N_z)\}$ and with nearest-neighbor edges, and let
$G_{\comb}$ be the tree-like graph composed by a backbone $\Z$ with
leaves $(G_z)_{z\in\Z}$; $(z,0)\in G_z$ is identified with
$z\in\mathbb Z$ on the backbone. By projecting the simple random walk on
$G_{\comb}$ to the backbone we obtain a RTRW denoted $X^{\comb}$.

%f2 #&#
\begin{figure}[b]

\includegraphics{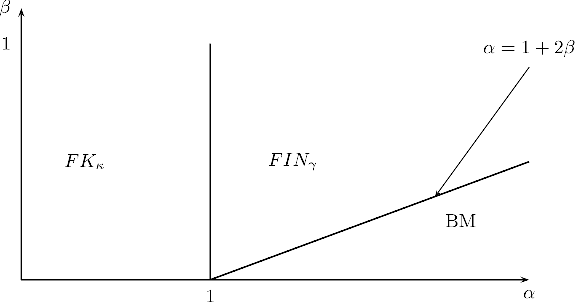}

\caption{Phase diagram for the comb model.}
\label{figure 2}
\end{figure}

We will see later that the behavior of $X^\comb$ is not very rich. When
$\alpha>1$, the teeth are ``short'' and the mean time spent on them has a
finite expectation, thus $X^\comb$ is diffusive and Brownian motion is
its scaling limit. On the other hand, when $\alpha<1$, then the teeth
may be ``long,'' and the expectation of the mean trapping time is infinite.
However, as it is rather unlikely for the random walk on $G_\comb$ to
reach the tip of long teeth, it takes many visits to a tooth to discover
that it is long. This indicates that in this case the FK process is the
limit.

To obtain a richer behavior, we need to increase the chance that the
random walk on $G_\comb$ hits the tips of the teeth. Therefore, we add a
small drift pointing to the tips, as follows. Let $Y^\comb$
be a random walk on $G_\comb$ which on the backbone behaves like the
simple random walk on $G_\comb$,
%
%e3.3 #&#
\begin{equation}
\Pb\bigl[Y^\comb_{k+1}= z\pm1| Y^\comb_k
= z\bigr]=\Pb\bigl[Y^\comb_{k+1}=(z,1)|Y^\comb_k=z
\bigr]=\tfrac{1}3,
\end{equation}
and, when on the tooth $G_z$, it performs a
random walk with a drift $g(N_z)\geq0$ pointing away
from the backbone, reflecting at the tip:
for any $z$ and $0<n<N_z$,
%
%e3.4 #&#
%e3.5 #&#
%e3.6 #&#
\begin{eqnarray}
\Pb\bigl[Y^{\comb}(k+1)=(z,n+1)|Y^{\comb}(k)=(z,n)\bigr]&=&
\bigl(1+g(N_z)\bigr)/{2},
\\
\Pb\bigl[Y^{\comb}(k+1)=(z,n-1)|Y^{\comb}(k)=(z,n)\bigr]&=&
\bigl(1-g(N_z)\bigr)/{2},
\\
\qquad
\Pb\bigl[Y^{\comb}(k+1)=(z,N_z-1)|Y^{\comb}(k)=(z,N_z)
\bigr]&=&1.
\end{eqnarray}
We will choose $g$ as
%
%e3.7 #&#
\begin{equation}
g(N)=\min\bigl(\beta N^{-1}\log(N),1\bigr)
\end{equation}
for some $\beta\geq0$. The case $\beta=0$ corresponds to the comb model
without drift.

%de3.4 #&#
\begin{definition}[(Comb model)]\label{d:combmodel}
We define $X^\comb$ as then the projection of $Y^\comb$ to the
backbone. More precisely, $X^{\comb}_t=z$ iff
$Y^\comb_{\lfloor t \rfloor}\in G_z$.
\end{definition}

The next theorem describes the phase-diagram of $X^\comb$ (see Figure \ref{figure 2}).

%th3.5 #&#
\begin{theorem}
The comb model has the following scaling behavior:
\label{t:phasediagramcombmodel}
\begin{longlist}[(iii)]
\item[(i)]
If $\alpha>1 $ and $1+2\beta<\alpha$, then for some
$m\in(0,\infty)$, the process
$\varepsilon X^\comb_{m\varepsilon^{-2} t}$ converges
to a standard
Brownian motion in the sense of Theorem~\ref{t:BMconvergence}.

\item[(ii)]
If $\alpha>1$ and $1+2\beta>\alpha$, then for
$\gamma= \alpha/(1+2\beta)$ there exists a regularly varying
function $q(\varepsilon)$ of index $1+1/\gamma$, such that the process
$\varepsilon X^\comb_{q(\varepsilon)^{-1}t}$ converges
to $\FIN_{\gamma}$ in the sense of Theorem~\ref{t:RSBMconv}.
The same holds true for the line $\alpha=1$, \mbox{$\beta>0$}.

\item[(iii)]
If $\alpha<1$, then for $\kappa=\frac{1+\alpha}{2(1+\beta)}$, there
exists a regularly varying function $q(\varepsilon)$ of index
$2/\kappa$ such that the process
$\varepsilon X^{\comb}_{q(\varepsilon)^{-1}t}$\vspace*{1pt} converges to a
fractional kinetics process with parameter $\kappa$ in the sense of
Theorem~\ref{t:FKconv}.
\end{longlist}
\end{theorem}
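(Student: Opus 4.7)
The key observation is that $X^{\comb}$ is a RTRW on $\Z$ with i.i.d.\ trapping landscape $(\pi_z)_{z\in\Z}$, where $\pi_z$ is the law of the total sojourn time $T_z$ spent in $G_z$ between the moment the backbone walk enters $z$ and the moment it first jumps to $z\pm 1$. Conditional on $N_z = N$, this sojourn decomposes as
\begin{equation*}
  T_z = (K+1) + \sum_{i=1}^K E^{(i)},
\end{equation*}
where $K$ is geometric with $\Pb(K=k) = (2/3)(1/3)^k$ (the number of successful tooth excursions before leaving) and the $E^{(i)}$ are i.i.d.\ copies of the hitting time of $0$ from $1$ for the biased random walk on $\{0,1,\dots,N\}$ with drift $g(N)$ toward $N$ and reflection at $N$.

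\textbf{Moment and tail estimates.} Solving the recurrence $p(h_{k+1}-h_k) - q(h_k - h_{k-1}) = -1$ with $h_0=0$ and reflection $h_N = 1 + h_{N-1}$ yields $d_k := h_k - h_{k-1} = (p/q)^{N-k}(1+1/g) - 1/g$. Using $(p/q)^N \approx N^{2\beta}$ and $1/g \approx N/(\beta\log N)$, this gives $\E[E^{(i)}|N] = h_1 \sim N^{1+2\beta}/(\beta\log N)$ for $\beta>0$ (and $2N-1$ for $\beta=0$), and hence $m(\pi_z) = 3/2 + h_1(N_z)/2$. A parallel computation from $\E[E^2] = 2\int_0^\infty t\,\Pb(E>t)\,dt$ gives $\E[(E^{(i)})^2|N] \sim h_1 \cdot h_N$ where $h_N \sim N^{2+2\beta}/\log^2 N$ is the mean hitting time of $0$ from $N$. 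Transferring the tail $\Pb(N>n)\sim n^{-\alpha}$ through $h_1$ yields $\Pb(m(\pi_0)>u) = u^{-\gamma}L(u)$ with $\gamma = \alpha/(1+2\beta)$ and $L$ slowly varying.

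\textbf{Application to the three cases.} In case (i), $\alpha > 1+2\beta$ forces $\gamma>1$, so $\E[m(\pi_0)]<\infty$ and Theorem~\ref{t:BMconvergence} applies directly. In case (ii), $1<\alpha<1+2\beta$ gives Assumption~(HT) with $\gamma = \alpha/(1+2\beta)\in(0,1)$; since the conditioning $m(\pi_0)=d(\varepsilon)$ forces $N \sim d(\varepsilon)^{1/(1+2\beta)}$, one obtains $m_2(d(\varepsilon)) \sim d(\varepsilon)^{2+1/(1+2\beta)}/\log d(\varepsilon)$ and hence $\varepsilon d(\varepsilon)^{-2}m_2(d(\varepsilon)) \sim \varepsilon^{1-1/\alpha}/\log\varepsilon^{-1} \to 0$, triggering Theorem~\ref{t:FINconv}. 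In case (iii), $\alpha<1$, I analyse $\Pb(E>t\mid N)$ in three regimes: a diffusive $\sim t^{-1/2}$ tail for $t\lesssim 1/g(N)^2$, an intermediate ``plateau'' of height $\sim g(N)$ for $1/g(N)^2\lesssim t \lesssim h_N$ (accounting for excursions that reach the tip and bounce many times before returning to the backbone), and exponential decay at scale $h_N$ beyond. Averaging against the law of $N$, the plateau dominates for $\beta>0$ while the diffusive regime handles $\beta=0$, yielding $\Pb_\mathrm{av}(T>t) \sim t^{-\kappa}$ with $\kappa = (1+\alpha)/(2(1+\beta))$. A Tauberian argument then gives $\Gamma(\varepsilon) \sim \varepsilon^{\kappa}$, matching~\eqref{e:qFKcondeq} with FK exponent $\kappa$; the second-moment bound~\eqref{e:FKsecondmoment} is verified by the analogous estimation of $\E[(1-\hat\pi_0(q_{\FK}(\varepsilon)))^2]$, and Theorem~\ref{t:FKconv} applies.

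\textbf{Main obstacle.} The hardest step is case (iii): the exponent $\kappa = (1+\alpha)/(2(1+\beta))$ is not produced by the naive short-time diffusive tail $t^{-1/2}$ (which would give $(1+\alpha)/2$, correct only at $\beta=0$), but by the intermediate plateau regime in which the walk has reached the reflecting tip of a long tooth and executes many near-tip bounces before successfully escaping to the backbone. Correctly identifying the two crossover scales $1/g(N)^2$ and $h_N$ across the full distribution of $N$, and showing that the plateau contribution $\sum_{n_2<n<n_1} n^{-1-\alpha} g(n)$ produces the advertised regularly varying behavior with the right constant, is the main technical effort. The companion bound~\eqref{e:FKsecondmoment} and the analogous second-moment check in case (ii) require the same refined control on higher moments of $E_N$.
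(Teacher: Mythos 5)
Your proposal is correct in structure and in all the exponents; it follows the paper's outline closely for parts (i)--(ii) but takes a genuinely different route for part (iii). The shared backbone is the decomposition of the sojourn time into a geometric number of excursion times into the tooth (your $T_z = (K+1) + \sum_{i=1}^K E^{(i)}$ corresponds to the paper's \eqref{e:pitotheta}), the first- and second-moment asymptotics $m(\theta^N)\sim N^{1+2\beta}/(\beta\log N)$ and $m_2(\theta^N)\sim N^{3+4\beta}/(\beta^3\log^3 N)$ (your $h_1\cdot h_N$ computation matches the paper's Lemma~\ref{l:moments}), the tail $\Pb(m(\pi_0)>u)=u^{-\gamma}L(u)$ with $\gamma=\alpha/(1+2\beta)$, and the applications of Theorems~\ref{t:BMconvergence} and~\ref{t:FINconv}.

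For part (iii), the paper works analytically: it computes the generating function of $\theta^N$ explicitly via \eqref{generatingfunction}, extracts uniform bounds on $1-\hat\theta^N(\varepsilon)$ over the $N$-regimes split at the threshold $u(\varepsilon)$ (Lemmas~\ref{l:tanh} and~\ref{l:betagezero}), and then sums directly over $N$ to show $\Gamma(\varepsilon)=\E[1-\hat\pi_0(\varepsilon)]$ is regularly varying of index $\kappa$. You instead work probabilistically with the tail $\Pb(E>t\,|\,N)$, split into a diffusive $t^{-1/2}$ regime for $t\lesssim g(N)^{-2}$, a plateau $\sim g(N)$ for $g(N)^{-2}\lesssim t\lesssim h_N$, and an exponential tail beyond, then average over $N$ and invoke a Tauberian theorem. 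The two routes are mathematically equivalent: your crossover scales $g(N)^{-2}$ and $h_N$ correspond precisely to the paper's $N$-thresholds $u(\varepsilon)^{1/2}$ and $u(\varepsilon)^{1+\beta}$ (read in the other direction), and the dominating regime you identify (plateau for $\beta>0$, diffusive for $\beta=0$) is exactly what the paper's computation shows. Your route is closer to the underlying probability and more transparent about \emph{why} $\kappa=\frac{1+\alpha}{2(1+\beta)}$; the paper's route avoids the detour through Tauberian theory and gives the hypotheses \eqref{e:qFKcondeq}, \eqref{e:FKsecondmoment} of Theorem~\ref{t:FKconv} directly, since these are already expressed in Laplace-transform language. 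The ``main obstacle'' you flag --- making the plateau regime rigorous uniformly over the full distribution of $N$, with control of slowly varying corrections --- is indeed where the work is; the paper discharges it with Lemma~\ref{l:betagezero}, and your proposal leaves it as an explicitly acknowledged sketch. That is a gap in completeness but not in conception: the argument you describe, once carried out, closes correctly.
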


%re3.6 #&#
\begin{remark}
We expect that on the line $\alpha=1+2\beta$ the scaling limit is
Brownian motion.
\end{remark}

%s4 #&#
\section{Trapped random walks and trapped Brownian motions}%<<<1
\label{s:delayed}

%s4.1 #&#
\subsection{Trapped random walk} %<<<2

In this section, we give the definitions of several classes of processes
which we will use through the paper.

%s4.1.1 #&#
\subsubsection{Time changed random walk}

We first consider ``deterministic'' time change. Let $Z = (Z_k)_{k\ge0}$
be a simple symmetric discrete-time random walk on $\mathbb Z$, $Z_0=0$,
and let $(s_x^i)_{x\in\Z,i\in\N}$ (with $\mathbb N=\{1,2,\ldots\}$)
be a
family of positive numbers. We define \textit{time changed random walk}
as the continuous-time $\mathbb Z$-valued process following the same
trajectory as $Z$, characterized by stating that the duration of the $i$th
visit of $X$ to $x\in\Z$ is $s_x^i$.

Alternatively, the time changed random walk can be defined using the
following procedure, which will be more suitable for generalization into
a continuous setting. Consider an atomic measure on
$\mathbb H:= \R\times\R_+ = \mathbb R \times[0,\infty)$ given by
%
%e4.1 #&#
\begin{equation}
\label{e:mudef} \mu:=\sum_{x\in\Z,i\in\N} s_x^i
\delta_{(x,i)}.
\end{equation}
Let
%
%e4.2 #&#
\begin{equation}
L(x,t):=\sum_{i=1}^{\lfloor t\rfloor}
\1_{\{Z_i=\lfloor x \rfloor\}},\qquad t\ge0,x\in\mathbb R
\end{equation}
be the local time of $Z$. For a Borel-measurable function
$f\dvtx \mathbb R\to\mathbb R_+$, define the set $U_f\subset\mathbb H$ of
points under the graph of $f$ by
%
%e4.3 #&#
\begin{equation}
\label{e:Uf} U_f:=\bigl\{(x,y)\in\mathbb H\dvtx y\leq f(x)\bigr
\}.
\end{equation}
Let $\phi[\mu,Z]\dvtx \mathbb R_+\to\mathbb R_+$ be the function
%
%e4.4 #&#
\begin{equation}
\phi[\mu,Z]_t:=\mu(U_{L(\cdot, t)}), \qquad t \ge0,
\end{equation}
and let $\psi[\mu,Z]$ be its right-continuous generalized inverse
%
%e4.5 #&#
\begin{equation}
\psi[\mu,Z]_t:=\inf\bigl\{s>0\dvtx \phi[\mu,Z]_s>t
\bigr\},\qquad  t\ge0.
\end{equation}

%de4.1 #&#
\begin{definition}
\label{d:tcrw}
The \emph{$\mu$-time changed
random walk} $(Z[\mu]_t)_{t\ge0}$ is the process given by
%
%e4.6 #&#
\begin{equation}
Z[\mu]_t:=Z_{\psi[\mu,Z]_t}, \qquad t\ge0.
\end{equation}
\end{definition}

%re4.2 #&#
\begin{remark}
(a) If $\mu(\mathbb H)<\infty$, $Z[\mu]$ is not defined for times
$t> \mu(\mathbb H)$ and might not be defined for $t=\mu(\mathbb H)$.

(b) It is easy to see that the functions $\phi[\mu,Z]$ and
$\psi[\mu,Z]$ are nondecreasing and right-continuous. Hence, $Z[\mu]$
has right-continuous trajectories.
\end{remark}

%s4.1.2 #&#
\subsubsection{Trapped random walk}

We want to, of course, to consider random time changes. One natural way how to
introduce randomness is to require that the duration of every visit to
$x\in\Z$ is distributed according to some probability distribution~$\pi_x$,
which may depend on $x$, assuming also that the durations of the visits
are independent, and independent of the direction of the jumps of the
random walk $Z$. We will call such random time change \textit{trapped
random walk} with (deterministic) \textit{trapping landscape}
$\pi= (\pi_x)_{x\in\mathbb Z}$.

More precisely, extending Definition~\ref{d:tcrw}, we may define the
trapped random walk as follows.

%de4.3 #&#
\begin{definition}[(Trapped random walk)]
\label{d:Trapped random walk}
Let $\bolds\pi=(\pi_x)_{x\in\mathbb Z}$ be a sequence of
probability measures on $(0,\infty)$, $(s_x^i)_{i\in\N,x\in\Z}$ an
independent family of random variables such for every $x\in\Z$,
$(s_x^i)_{i\in\N}$ is an i.i.d. sequence distributed according to
$\pi_x$. Let $\mu$ be a random measure on $\mathbb H$ defined as in
\eqref{e:mudef}, and let $Z$ be a simple symmetric random walk
independent of $(s_x^i)_{x\in\mathbb Z,i\in\mathbb N}$. The $\mu$-time
changed random walk $Z[\mu]$ is then called \emph{trapped random
walk} (TRW) with \emph{trap measure $\mu$} and \emph{trapping
landscape $\bolds\pi$}.
\end{definition}

We present three examples of TRWs.

%ex4.4 #&#
\begin{example}[(Montrol--Weiss continuous-time random walk)]
\label{ex:MWCTRW}
Let $\pi_x=\pi_0$ for all $x\in\mathbb{Z}$, and assume that $\pi_0$
satisfies the tail condition
%
%e4.7 #&#
\begin{equation}
\lim_{u\to\infty}u^\gamma\pi_0\bigl([u,\infty
)\bigr)=c
\end{equation}
for some
$\gamma\in(0,1)$ and $c\in(0,\infty)$. In this case, the durations of
visits $(s_x^i)_{i\in\N,x\in\Z}$ form an i.i.d. family with marginal
$\pi_0$, and the trapped random walk $Z[\mu]$ is a one-dimensional
continuous-time random walk \`a la Montroll--Weiss (see
\cite{montrollweiss}).
\end{example}

%ex4.5 #&#
\begin{example}[(Geometric TRW)]
\label{ex:GTRW}
Let $(G_x)_{x\in\Z}$ be a family of rooted finite graphs, and let $G$
be the graph obtained by attaching the graphs $G_x$ to vertices
of~$\mathbb Z$. More precisely, denote by $V(G_x)$ the set of vertices
of~$G_x$, and assume that $(V(G_x))_{x\in\Z}$ are pairwise disjoint. Then
$G$ is the graph whose set of vertices is $V(G):=\bigcup_{x\in\Z}V(G_z)$,
and its set of edges $E(G)$ is determined by: $(y,z)\in E(G)$ iff one
of the following conditions hold:
\begin{itemize}
\item There exists $x\in\Z$ such that $y,z\in V(G_x)$ and $y$
and $z$ are neighbors in $G_x$.
\item There exists $x\in\Z$ such that $y$ is the root of $G_x$
and $z$ is the root of $G_{x+1}$.
\item There exists $x\in\Z$ such that $y$ is the root of $G_x$
and $z$ is the root of $G_{x-1}$.
\end{itemize}
Hence, $G$ is a graph consisting of a copy of $\Z$ (called the
\textit{backbone}) from which emerge \textit{branches} $G_x, x\in\Z$.
We will naturally identify the backbone with $\Z$.

Let $Y:=(Y_k)_{k\geq0}$ be a discrete time, symmetric random walk on $G$
with $Y_0=0$. We can project $Y$ to the backbone to obtain a continuous
time $\mathbb Z$-valued process $W:=(W_t)_{t\geq0}$ given by
$W_t= x\in\Z$ iff $Y_{\lfloor t \rfloor}\in G_x$. We call $W$
\emph{Geometric trapped random walk}. Its waiting times are of course
related to the distribution of the return time to the root for the
simple random walks on the finite graphs $G_x$.
\end{example}

%ex4.6 #&#
\begin{example}[(Markovian random walk on $\mathbb Z$)]
The trapped random walk is in general not Markovian. However, when for
a family of positive numbers $(m_x)_{x\in\Z}$, $\pi_x$ is the
exponential distribution with mean $m_x$, then the trapped random walk
$Z[\mu]$ with trapping landscape $(\pi_z)_{z\in\Z}$ is Markovian. The
total jump rate at $x$ is $m_x^{-1}$.
\end{example}

%s4.2 #&#
\subsection{Trapped Brownian motion} %<<<2
We now define continuous counterparts of the previously defined processes.

%s4.2.1 #&#
\subsubsection{Time changed Brownian motion}

%de4.7 #&#
\begin{definition}[($\mu$-time changed Brownian motion)]
\label{d:delayedBM}
Let $\mu$ be a deterministic measure on $\mathbb H$, and $B$ be a standard
one-dimensional Brownian motion. Denote by $\ell(x,t)$ a bi-continuous
version of the local time of $B$, and define
%
%e4.8 #&#
\begin{eqnarray}
\phi[\mu,B]_t&:=&\mu(U_{\ell(\cdot, t)}),
\nonumber
\\[-8pt]
\\[-8pt]
\nonumber
\psi[\mu,B]_t&:=&\inf\bigl\{s>0\dvtx \phi[\mu,B]_s>t
\bigr\}.
\end{eqnarray}
The \emph{$\mu$-time changed Brownian motion} $(B[\mu]_t)_{t\ge
0}$ is the
process given by
%
%e4.9 #&#
\begin{equation}
B[\mu]_t:=B_{\psi[\mu,B]_t},\qquad  t\ge0.
\end{equation}
\end{definition}

%re4.8 #&#
\begin{remark}
It is easy to see that the functions $\phi[\mu,B]$, and $\psi[\mu,B]$
are nondecreasing and right-continuous. Hence, $B[\mu]$ has
right-continuous trajectories.
\end{remark}

%s4.2.2 #&#
\subsubsection{Trapped Brownian motion}

Before defining the class of trapped Brownian motions, we recall the
definition of random measure with independent increments
(see Section~10 of \cite{Kal02}).

%de4.9 #&#
\begin{definition}
\label{trapmeasure}
A random measure $\mu$ on $\mathbb H$ is called a \emph{measure with
independent increments} iff for every two disjoint sets
$A,B\in{\mathcal B}(\mathbb H)$, the random variables $\mu(A)$ and
$\mu(B)$ are independent.
\end{definition}

For any random measure $\mu$ and $A\in\mathcal B(\mathbb R)$ we define
the $\mu$-trapping process $(\mu\langle A \rangle_t)_{t\geq0}$ by
%
%e4.10 #&#
\begin{equation}
\mu\langle A\rangle_t:= \mu\bigl(A\times[0,t]\bigr).
\end{equation}
Note that, if $\mu$ is a measure with independent increments and $A,B$
are disjoint Borel subsets of $\mathbb{R}$, then $\mu\langle A
\rangle$
and $\mu\langle B \rangle$ are independent processes.

%de4.10 #&#
\begin{definition}[(L\'evy trap measure)]
A random measure $\mu$ on $\mathbb H$ is called \emph{L\'evy trap
measure} when $\mu\langle A \rangle$ is a L\'evy process for every
bounded $A\in\mathcal{B}(\mathbb{R})$.
\end{definition}

Observe that a L\'evy trap measure does not need to have independent
increments. Its increments are independent in the time direction, but
not necessarily in the space direction.
L\'evy trap measures which, in addition, have independent increments
will be used to define the trapped Brownian motions.

%de4.11 #&#
\begin{definition}[(Trapped Brownian motion)]
Let $\mu$ be a random measure on $\mathbb H$ and $B$ be a standard
one-dimensional Brownian motion. Suppose that
(i) $\mu$ is independent from $B$,
(ii) $\mu$ is a measure with independent increments,
(iii) $\mu$ is a L\'evy trap measure.
Then $B[\mu]$ is called \emph{Trapped Brownian Motion} (TBM) with
\emph{trap measure} $\mu$.
\end{definition}

The class of TBMs includes the following processes.

%ex4.12 #&#
\begin{example}[(Speed-measure changed Brownian motion)]
\label{ex:speedmeasure}
Fix $\rho\in M(\mathbb R)$ (cf. the \hyperref[app]{Appendix} for the notation) and let
$\Leb_+$ be the Lebesgue measure on
$(\mathbb{R}_+, \mathcal B(\mathbb R_+))$. Define
$\mu:=\rho\otimes\Leb_+$. Then $\mu$ is a (deterministic) L\'evy trap
measure. Furthermore, as $\mu$ is deterministic, it is also a measure
with independent increments.

The TBM $B[\mu]$ is simply a time change of Brownian motion with speed
measure $\rho$. Indeed, this time change $B^\rho$ is usually defined as
%
%e4.11 #&#
\begin{equation}
\label{timechange} \bigl(B^\rho_t\bigr)_{t\geq0}:=(B_{\psi_{\rho}(t)})_{t\geq0}
\end{equation}
for $ \phi_{\rho}(s):=\int_{\mathbb{R}}\ell(x,s)\rho(dx)$ and
$\psi_{\rho}(t):=\inf\{s>0\dvtx \phi_{\rho}(s)>t \}$.
By Fubini's theorem, it is easy to see that
%
%e4.12 #&#
\begin{equation}\qquad\quad
\phi_{\rho}(s) = \int_{\mathbb{R}}\int
_0^{\ell(x,s)} \,dy\, \rho (dx) = (\rho\otimes\Leb_+)
(U_{\ell(\cdot,s)}) = \phi[\rho\otimes\Leb_+,B]_s.
\end{equation}
This implies that $B^\rho$ equals $B[\mu]$.
\end{example}

%ex4.13 #&#
\begin{example}[(Fractional kinetics process)]
\label{ex:FK}
Let ${\mathcal P}=(x_i,y_i,z_i)_{i\in\mathbb{N}}$ be a Poisson point
process on $\mathbb H\times(0,\infty)$ with
intensity measure
%
%e4.13 #&#
\begin{equation}
\varrho=\gamma z^{-1-\gamma}\,dx \,dy \,dz,\qquad  \gamma\in(0,1).
\end{equation}
Define the random measure $\mu_\FK$ on $\mathbb H$ as
%
%e4.14 #&#
\begin{equation}
\label{e:muFK} \mu_\FK=\mu_\FK^\gamma:=\sum
_i z_i\delta_{(x_i,y_i)}.
\end{equation}

It is easy to see that for every compact $K\subset\mathbb H$, $\mu
_\FK(K)$ has a
$\gamma$-stable distribution with the scaling parameter proportional
to the Lebesgue measure of $K$. Further, as $\mathcal P$ is a Poisson
point process, we have that $\mu_\FK(K_1)$ and $\mu_\FK(K_2)$ are
independent when $K_1$, $K_2$ are disjoint. Thus, $\mu_\FK$ is a
measure with independent increments, and $\mu_\FK\langle A\rangle$ is
a stable L\'evy process for each bounded $A \in{\mathcal B} (\R)$, and
thus $\mu_\FK$ is a L\'evy trap measure.

The TBM $B[\mu]$ corresponding to this measure is
the FK process introduced in Definition~\ref{d:fk}. To see this, it is
enough to show that the process $(\phi[\mu,B]_t)_{t\in\mathbb
{R}_+}$ is
a $\gamma$-stable subordinator that is independent of $B$.

This can be proved as follows. Fix a realization of the Brownian motion
$B$. Then its local time is also fixed. As $\Leb(U_{\ell(\cdot,t)})
= t$
and
$U_{\ell(\cdot,s)}$, $(U_{\ell(\cdot,t)}\setminus U_{\ell(\cdot,s)})$
are disjoint sets
for every $s<t$, we have that $\phi[\mu,B]_t$ has $\gamma$-stable
distribution with the scaling parameter proportional to $t$, and
$\phi[\mu,B]_t-\phi[\mu,B]_s$ is independent of $\phi[\mu,B]_s$.
Hence, for every realization of $B$, $\phi[\mu,B]$ is a $\gamma$-stable
subordinator, and thus $\phi[\mu,B]$ is a $\gamma$-stable
subordinator independent of $B$.
\end{example}

The last important example goes in the direction of the SSBM.

%ex4.14 #&#
\begin{example}
\label{ex:quenchedGFIN}
Let $k\in\mathbb{N}\cup\{\infty\}$ and $((S^i_t)_{t\geq0})_{i <
k}$ be
a family of independent subordinators. Let $(x_i)_{i < k}$ be real
numbers. Denoting by $dS^i$ the Lebesgue--Stieltjes measure corresponding
to $S^i$, it is immediate that
%
%e4.15 #&#
\begin{equation}
\mu(dx\otimes dy):=\sum_{i < k}\delta_{x_i}(dx)
\otimes dS^i(y)
\end{equation}
is a L\'evy trap measure with independent increments. The TBM $B[\mu]$
is a
process which is always located at some $x_i$.
\end{example}

%s5 #&#
\section{Randomly trapped random walk and randomly trapped Brownian
motion}%<<<1
\label{s:rtrwrtbm}

The classes of trapped random walks and trapped Brownian motions are too
small to include some processes that we want to consider, in particular,
Bouchaud's trap model, the FIN diffusion and the projections of the
random walk on IIC, IPC. More precisely, quenched distributions of these
models (given corresponding random environments) are trapped random
walks. If we want to consider averaged distributions, we need to
introduce larger classes, \textit{randomly trapped random walks} and
\textit{randomly trapped Brownian motion}. Their corresponding random
measures will be constructed as mixtures of the respective trap measures.

The mixture of random measures is defined as follows. Let
$(\Omega, \mathcal F, \mathbb P)$ be a probability space, and let for
every $\omega\in\Omega$, $\mu_\omega$ be a random measure on
$\mathbb H$ defined on some other probability space
$(\tilde\Omega, \tilde{\mathcal F}, \tilde{\mathbb P})$. The random
measure $\mu\dvtx \Omega\times\tilde\Omega\to M(\mathbb H)$ given by
%
%e5.1 #&#
\begin{equation}
\mu(\omega,\tilde\omega) (A)= \mu_\omega(\tilde\omega) (A), \qquad A\in
\mathcal B(\mathbb H)
\end{equation}
is called mixture of $\mu_\omega$ with respect to $\mathbb P$. For
the reader's convenience, Proposition~\ref{p:existence} ensuring the
existence of the mixtures is included in the \hyperref[app]{Appendix}.

%s5.1 #&#
\subsection{Randomly trapped random walk}%<<<2
\label{ss:rtrw}

%de5.1 #&#
\begin{definition}[(Randomly trapped random walk)]
\label{d:rtrw}
Let $(\Omega,\mathcal{F},\Pb)$ be a probability space and
$(\mu_\omega)_{\omega\in\Omega}$ a family of \emph{trap measures} on
a probability space
$(\tilde\Omega, \tilde{\mathcal F}, \tilde{\mathbb P})$ indexed by
$\omega\in\Omega$. Let $\mu$ be the mixture of
$(\mu_{\omega})_{\omega\in\Omega}$ with respect to~$\Pb$, and $Z$ a
simple random walk independent of $\mu$. Then the $\mu$-time changed
random walk $Z[\mu]$ is called \emph{Randomly Trapped Random Walk}
(RTRW) with \emph{trap measure}~$\mu$.
\end{definition}

%de5.2 #&#
\begin{definition}[(Random trapping landscape)]
\label{d:trappinglandscape}
Let $Z[\mu]$ be a RTRW where $\mu$ is the mixture of
$(\mu_\omega)_{\omega\in\Omega}$ w.r.t. $\Pb$. Let
$\bolds\pi:=(\pi_x)_{x\in\mathbb{Z}}\dvtx \Omega\to M_1((0,\infty
))^{\mathbb{Z}}$
be defined by stating that, for each $\omega\in\Omega$,
$\bolds\pi(\omega)$ is the trapping landscape of $Z[\mu
_{\omega}]$.
$\bolds\pi(\omega)$ is called the \emph{random trapping
landscape of} $\mu$.

Let $\mathbf P = \mathbb P\circ\bolds\pi^{-1}$ be the
distribution of $\bolds\pi$ on $M_1((0,\infty))^{\mathbb Z}$. If
$\mathbf P$ is a product measure, that is,
$\mathbf P=\bigotimes_{x\in\Z} P^x$ for some
$P_x\in M_1(M_1((0,\infty)))$, $x\in\mathbb Z$, then the coordinates
of the random trapping landscape $(\pi_x)_{x\in\Z}$ are independent. In
this case, we say that the random trapping landscape $\bolds\pi$
is \emph{independent}. If $\mathbf P=\bigotimes_{x\in\Z} P$ for
some $P\in M_1(M_1((0,\infty)))$, then the $(\pi_x)_{x\in\Z}$ are
i.i.d., and we say the random trapping landscape $\bolds\pi$ is
\emph{i.i.d.}
\end{definition}

As usual, we give some examples of RTRWs.

%ex5.3 #&#
\begin{example}[(Bouchaud trap model)]
The symmetric one-dimensional \emph{Bouchaud trap model} (BTM) is a
symmetric continuous time random walk $X$ on $\Z$ with random jump
rates. More precisely, to each vertex $x\in\Z$ we assign a positive
number $\tau_x$ where $(\tau_x)_{x \in\Z}$ is an i.i.d. sequence of
positive random variables defined on a probability space
$(\Omega,\mathcal F,\Pb)$ such that
%
%e5.2 #&#
\begin{equation}
\label{e:btm-heavytails} \lim_{u\rightarrow\infty}u^\gamma\mathbb{P}[
\tau_z\geq u]=c,\qquad \gamma\in(0,1), c\in(0,\infty).
\end{equation}
Each visit of $X$ to $x\in\Z$ lasts an exponentially
distributed time with mean $\tau_x$.

It can be seen easily that the BTM is a RTRW. Its random trapping
landscape is given by
%
%e5.3 #&#
\begin{equation}
\bolds\pi(\omega)=(\nu_{\tau_x(\omega)})_{x\in\mathbb Z},
\end{equation}
where $\nu_a$ is the exponential distribution with mean $a$. As $\tau_x$
are i.i.d., the random trapping landscape $\bolds\pi$ is i.i.d.
\end{example}

%ex5.4 #&#
\begin{example}[(Trap model with transparent traps)]
\label{ex:faketrap}
The trap model with transparent traps defined in Section~\ref{ss:transparenttraps} is a particular case of RTRW.
In Section~\ref{ss:faketrap} we will study the scaling limits of this process.
\end{example}

The following three examples of RTRW are of geometric nature. The first
(and the easiest) one is studied in this paper, the behavior of the
next two examples will be considered a follow-up paper.

%ex5.5 #&#
\begin{example}[(Comb model)]
\label{ex:combmodel}
The Comb model defined in Section~\ref{ss:combpre} is a RTRW. Its
scaling limits are given in Theorem~\ref{t:phasediagramcombmodel} which
we prove in Section~\ref{ss:combmodel}.
\end{example}

%ex5.6 #&#
\begin{example}[(Incipient critical Galton--Watson tree)]
Let $T$ be a rooted, regular tree of forward degree $g>1$. Let us perform
critical percolation on $T$ and denote by $\mathcal{C}_n$ the
percolation cluster of the root conditioned on reaching level $n$, that
is conditioned on having a vertex whose graph-distance from the root is
$n$. By letting $n\to\infty$ the trees $\mathcal{C}_n$ converge to the
\textit{Incipient infinite cluster} (IIC) (for details of this
construction, see \cite{kesten}). The IIC is an infinite random tree
and it can be shown that it has a single path to infinity, that is,
there is a single unbounded nearest neighbor path started at the root.
Such path is called the \textit{backbone}. The backbone is obviously
isomorphic (as a graph) to $\N$, hence the IIC can be seen as $\N$
adorned with dangling branches. We denote $\mathcal{L}_k$ the branch
emerging from the $k$th vertex of the backbone. Let
$(Y^{\IIC}_k)_{k\in\N}$ be a simple random walk on the IIC starting
from the root. Let $W^{\IIC}$ be the projection of $Y^{\IIC}$ to the
backbone. More precisely, let $(W^{\IIC}_t)_{t\geq0}$ be a
continuous-time random walk taking values in $\N$ defined by stating
that $W^{\IIC}_t=k$ if and only if
$Y^{\IIC}_{\lfloor t \rfloor}\in\mathcal{L}_k$. Then $W^{\IIC}$ is a
RTRW (disregarding for the moment the fact that
it takes values on $\N$ instead of $\Z$). In this case, the branches
$(\mathcal{L}_k)_{k\in\N}$ play the role of traps.
\end{example}

%ex5.7 #&#
\begin{example}[(Invasion percolation cluster)]
\label{ex:ipc}
One can also consider, instead of the incipient infinite cluster, the
\textit{invasion percolation cluster} (IPC) on a regular tree. The
construction of the IPC is as follows: Recall that $T$ denotes a
rooted, regular tree of forward degree $g>1$. Let $(w_x)_{x\in T}$ be
an i.i.d. sequence of random variables uniformly distributed over
$(0,1)$. Set $\mathcal{I}^0:=\{\mbox{root of }T\}$ and
%
%e5.4 #&#
\begin{equation}
\mathcal{I}^{n+1}:=\mathcal{I}^n\cup \bigl\{x\dvtx d
\bigl(x,\mathcal{I}^n\bigr)=1 \mbox{ and } w_x=\min\bigl
\{w_z\dvtx d\bigl(\mathcal{I} ^n,z\bigr)=1\bigr\} \bigr
\},
\end{equation}
where $d$ is the graph distance in $T$. That is, $\mathcal{I}^{n+1}$ is
obtained from $\mathcal{I}^n$ by adding the vertex on the outer
boundary of $\mathcal{I}^n$ with the smallest ``weight.'' The
\emph{invasion percolation cluster} on $T$ is defined as
$\bigcup_{n\in\N}\mathcal{I}^n$. The IPC will be denoted as
$\mathcal{I}^{\infty}$. It can be shown (see \cite{ipc}) that, as the
IPC, the IIC possesses a single path to infinity. We can define a RTRW
$W^{\IPC}$ in the same way we have defined $W^{\IIC}$.
\end{example}

%s5.2 #&#
\subsection{Randomly trapped Brownian motion}

Finally, we define the randomly trapped Brownian motion analogously to RTRW.

%de5.8 #&#
\begin{definition}[(Randomly trapped Brownian motion)]
\label{d:rtrwbm}
Let a random measure $\mu$ be the mixture of
$(\mu_{\omega})_{\omega\in\Omega}$ with respect to $\Pb$, where for
each $\omega\in\Omega$, $\mu_{\omega}$ is a trap measure of a TBM.
Furthermore, let us suppose that $\mu$ is independent of the Brownian
motion $B$. Then $B[\mu]$ is called \emph{randomly trapped Brownian
motion} (RTBM) with \emph{trap measure} $\mu$.
\end{definition}

%ex5.9 #&#
\begin{example}[(FIN diffusion)]
\label{ex:FIN}
Let $\mathcal{P}=(x_i,v_i)_{i\in\mathbb{N}}$ be a Poisson point process
on $\mathbb R\times(0,\infty)$ with intensity measure
$\gamma \,dx\, v^{-1-\gamma} \,dv$, $\gamma\in(0,1)$, defined on a
probability space $(\Omega,\mathcal{F},\mathbb{P})$. For each
$\omega\in\Omega$, let
$\mu_{\omega}:=\sum_{i\in\mathbb{N}}\delta_{x_i(\omega)}\otimes
v_i(\omega)\Leb_+$.
By Proposition~\ref{p:existence}, the mixture of
$(\mu_\omega)_{\omega\in\Omega}$ w.r.t. $\Pb$ exists and thus there
exists the mixture $\mu_\FIN$ of $(\mu_{\omega})_{\omega\in\Omega}$
w.r.t. $\Pb$.

Recalling Example~\ref{ex:speedmeasure}, it is easy to see that
$B[\mu_\omega]$ is a time change of $B$ with speed measure
$\rho(dx) = \sum_{i} v_i(\omega) \delta_{x_i(\omega)}(dx)$. Comparing
this with Definition~\ref{d:FIN}, we see that the RTBM corresponding
to $\mu_\FIN$, $B[\mu_\FIN]$, is a FIN diffusion.
\end{example}

%ex5.10 #&#
\begin{example}[(Spatially subordinated Brownian motion)]
\label{ex:GFIN}
Recall from \eqref{e:fdPi} that $\mathfrak{F}^{\ast}$ is the
set of Laplace exponents of subordinators. Let $\mathbb{F}$ be a
$\sigma$-finite measure on $\mathfrak{F}^{\ast}$ satisfying the
assumption appearing in \eqref{e:assumptiononF} and let
$(x_i,f_i)_{i\ge0}$ be a Poisson point process on
$\mathbb R\times\mathfrak F^\ast$ defined on a probability space
$(\Omega, \mathcal F, \mathbb P)$ with intensity $dx \otimes\mathbb F$.
Let $(S_t^i)_{t\ge0}$, $i\ge0$, be a family of independent
subordinators, Laplace exponent of $S^i$ being $f_i$, defined on a
probability space
$(\tilde\Omega, \tilde{\mathcal F}, \tilde{\mathbb P})$.

For a given realization of $(x_i,f_i)_{i\ge0}$, we set similarly as
in Example~\ref{ex:quenchedGFIN}
%
%e5.5 #&#
\begin{equation}
\mu_{(x_i,f_i)} (dx\otimes dy) = \sum_{i\ge0}
\delta _{x_i}(dx)\otimes dS^i(y).
\end{equation}
It follows that for a fixed realization of $(x_i,f_i)_{i\ge0}$, the
measure $\mu_{(x_i,f_i)} $ is a L\'evy trap measure with
independent increments.

Using Proposition~\ref{p:existence}, we can show that the mixture of
$(\mu_{(x_i(\omega), f_i(\omega))})_{\omega\in\Omega}$
w.r.t.~$\Pb$,
%
%e5.6 #&#
\begin{equation}
\label{e:muGFIN} \mu_\GFIN^{\mathbb F}(\omega,\tilde\omega):=
\mu_{(x_i(\omega),f_i(\omega))} (\tilde\omega)
\end{equation}
is a random measure. The corresponding RTBM is the $\mathbb{F}$-spatially
subordinated Brownian motion introduced in Definition~\ref{d:GFIN}.
\end{example}

%\begin{remark} \label{r:finisrsbm}
% The FIN diffusion presented in def \ref{d:fin} is of course a
% SSBM. Indeed, it suffices to choose $\mathbb F$ to be
% $\int_0^\infty\gamma v^{-1-\gamma} \delta_{\lambda\mapsto v
%\lambda}(df) dv$,
% where $\delta_{\lambda\mapsto v \lambda}(df)$ is a point measure on
% $\mathfrak F^\ast$ concentrated on the linear function $v\lambda$.
%\end{remark}

%s6 #&#
\section{Convergence of processes}%<<<1
\label{s:convergence}

We study now the convergence of various classes of processes introduced in
the previous section.

%s6.1 #&#
\subsection{Convergence of time changed random walks} %<<<2

We start by presenting the basic convergence theorems for $\mu$-time
changed random walks and $\mu$-time changed Brownian motions. These
theorems allow one deduce the convergence of processes (TRWs, TBMs, RTRWs,
RTBMs) from the convergence of their associated random measures. This,
in turn, makes possible to use the well-developed theory of convergence
of random measures; see, for example, \cite{rmeasures}.

First, we need few additional definitions. We say that a random measure
$\mu$ is \emph{dispersed} if
%
%e6.1 #&#
\begin{equation}\qquad
\mu\bigl(\bigl\{(x,y)\in\mathbb H\dvtx y=f(x)\bigr\}\bigr)=0\qquad \mbox{almost
surely, for all $f\in C_0(\mathbb R,\mathbb R_+)$}
\end{equation}
(here, $C_0$ stands for the space of continuous functions with compact
support).
We say that a random measure $\mu$ is \emph{infinite} if
$\mu(\mathbb H)=\infty$, almost surely.
We say that $\mu$ is \emph{dense} if its support is $\mathbb H$, almost surely.

We write $D(\mathbb R_+)$, $D(\mathbb R)$ for the sets of real-valued
\textit{cadlag} functions on $\mathbb R_+$, or $\mathbb R$, respectively.
We endow these sets either with the standard Skorokhod $J_1$-topology, or
with the so called $M_1$-topology, and write $D(\mathbb R_+,J_1)$,
$D(\mathbb R_+,M_1)$ when we want to stress the topology used. Also,
$D(\mathbb R_+,U)$ will denote $D(\mathbb R_+)$ endowed with the
uniform topology. For definitions and properties of these topologies, see
\cite{Whi02}, Chapters~12 and 13.

Let $\mu$ be a random measure and $\varepsilon>0$.
We define the scaled random measure $\mathfrak{S}_{\varepsilon}(\mu
)$ by
%
%e6.2 #&#
\begin{equation}
\mathfrak{S}_{\varepsilon}(\mu) (A):=\mu\bigl(\varepsilon^{-1}A
\bigr)\qquad \mbox{for each $A\in{\mathcal B}(\mathbb{H})$.}
\end{equation}

Our first theorem states that the convergence of $\mu$-time changed
random walks can be deduced from the convergence of associated measures.
As it does not complicate the proof, we allow $\mu$ to be random.

%th6.1 #&#
\begin{theorem}[(Convergence of time changed random walks)]
\label{t:delayedrwconv}
Let $\mu^\varepsilon$, $\varepsilon>0$, be a family of infinite random
measures supported on $\mathbb Z\times\mathbb N$, and let
$Z$ be a simple random walk independent of them.
Assume that there exists a nondecreasing
function $q\dvtx \mathbb{R}_+\to\mathbb{R}_+$ with
$\lim_{\varepsilon\to0}q(\varepsilon)=0$, such that, as
$\varepsilon\to0$,
$q(\varepsilon)\mathfrak{S}_{\varepsilon}(\mu^\varepsilon)$ converges
vaguely in distribution to a dispersed, infinite, dense random measure
$\mu$.
Then the corresponding time changed random walks $Z[\mu^\varepsilon ]$
converge after rescaling to the time changed Brownian motion $B[\mu]$,
%
%e6.3 #&#
\begin{equation}
\bigl(\varepsilon Z\bigl[\mu^{\varepsilon}\bigr]_{q(\varepsilon)^{-1}t}
\bigr)_{t\ge0} \xrightarrow{\varepsilon\to0} \bigl(B[\mu]_t
\bigr)_{t\ge0}
\end{equation}
in distribution on $D(\mathbb R_+,J_1)$. Here $B$ is a Brownian motion
independent of $\mu$.
\end{theorem}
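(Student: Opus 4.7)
The plan is to reduce the convergence of the time-changed walks to a deterministic continuity statement about the maps $\mu \mapsto \phi[\mu,\cdot]$, inversion, and composition, and then feed in Skorokhod's representation together with joint convergence of the random walk and its local times.

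First I would pass to a joint convergence setup. By Donsker's invariance principle together with the well-known convergence of the (rescaled) local times of simple random walk to the local time of Brownian motion, the pair $(\tilde Z^\varepsilon_s, \tilde\ell^\varepsilon(x,s)) := (\varepsilon Z_{\varepsilon^{-2}s},\, \varepsilon L(\varepsilon^{-1} x, \varepsilon^{-2} s))$ converges in distribution to $(B_s, \ell(x,s))$ on $D(\mathbb R_+,J_1)\times C(\mathbb R\times \mathbb R_+)$. Since $Z$ is independent of $\mu^\varepsilon$ and the limit $\mu$ is independent of $B$, this joint convergence combines with the assumed vague convergence $q(\varepsilon)\mathfrak S_\varepsilon(\mu^\varepsilon)\to\mu$ to yield joint convergence of the triple
\begin{equation}
\bigl(\tilde Z^\varepsilon,\, \tilde\ell^\varepsilon,\, q(\varepsilon)\mathfrak S_\varepsilon(\mu^\varepsilon)\bigr) \xrightarrow{\varepsilon\to 0} (B,\ell,\mu).
\end{equation}
Using Skorokhod's representation theorem I assume this convergence holds almost surely on a single probability space.

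Next, a direct change of variables shows that after rescaling the clock process takes the form
\begin{equation}
\bar\phi^\varepsilon_s := q(\varepsilon)\,\phi[\mu^\varepsilon,Z]_{\varepsilon^{-2}s} = \bigl(q(\varepsilon)\mathfrak S_\varepsilon(\mu^\varepsilon)\bigr)\bigl(U_{\tilde\ell^\varepsilon(\cdot,s)}\bigr),
\end{equation}
and similarly $\bar\psi^\varepsilon_t := \varepsilon^2\psi[\mu^\varepsilon,Z]_{q(\varepsilon)^{-1}t}$ is exactly the right-continuous inverse of $\bar\phi^\varepsilon$, so that $\varepsilon Z[\mu^\varepsilon]_{q(\varepsilon)^{-1}t} = \tilde Z^\varepsilon_{\bar\psi^\varepsilon_t}$. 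The key analytic step is then a continuity lemma: if $\nu_\varepsilon\to\nu$ vaguely with $\nu$ dispersed and $f_\varepsilon\to f$ locally uniformly with $f\in C_0$, then $\nu_\varepsilon(U_{f_\varepsilon})\to \nu(U_f)$. Indeed, for compactly supported $f$ the set $U_f$ has compact closure and its boundary is $\{(x,y): y=f(x)\}$, which is $\nu$-null by dispersedness; a standard sandwich with continuous compactly supported approximations from below and above then gives the claim. Applying this pointwise in $s$ to $f_\varepsilon=\tilde\ell^\varepsilon(\cdot,s)$ (which have support eventually contained in a common compact since $\tilde\ell^\varepsilon\to\ell$ uniformly on compacts) yields $\bar\phi^\varepsilon_s\to \phi[\mu,B]_s$, and monotonicity of both sides upgrades this to locally uniform convergence.

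Since $\mu$ is dense and infinite, $\phi[\mu,B]$ is strictly increasing, continuous and unbounded (the denseness ensures strict increase on any interval where $\ell(\cdot,s)$ grows, and the infiniteness prevents finite explosion). Therefore the inversion map is continuous at $\phi[\mu,B]$, giving $\bar\psi^\varepsilon_t\to \psi[\mu,B]_t$ locally uniformly, and $\psi[\mu,B]$ is itself continuous. A standard composition lemma then concludes $\tilde Z^\varepsilon_{\bar\psi^\varepsilon_\cdot}\to B_{\psi[\mu,B]_\cdot}$ in $D(\mathbb R_+,J_1)$, which is exactly $(B[\mu]_t)_{t\ge 0}$.

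The main obstacle I foresee is controlling the composition step cleanly: $\tilde Z^\varepsilon$ converges in $J_1$ but is not continuous, while $\bar\psi^\varepsilon$ can have jumps (flats of $\bar\phi^\varepsilon$), so one must verify that the limiting time-change $\psi[\mu,B]$ avoids the (countable) jump set of any $J_1$-realization in a way compatible with Whitt's continuity theorem for composition. The dispersedness and denseness of $\mu$ are precisely what is needed to rule out pathological alignment of flats of $\bar\phi^\varepsilon$ with jumps of $\tilde Z^\varepsilon$ in the limit, and this is where I expect the most care will be required.
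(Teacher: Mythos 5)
Your overall strategy is the one used in the paper — couple everything onto one probability space, prove convergence of the rescaled clock $\bar\phi^\varepsilon$ to $\phi[\mu,B]$, invert, compose — but your coupling is set up differently (joint weak convergence of walk, local times and measure followed by Skorokhod representation, as opposed to the paper's construction of the walk \emph{from} the Brownian motion via the hitting-time embedding, which gives a.s.\ local-time convergence for free). Either coupling works. The substantive problems are in the middle of your argument.

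First, the claim that $\phi[\mu,B]$ is continuous is false in general: in the SSBM case $\mu(dx\,dy)=\sum_i\delta_{x_i}(dx)\,dS^i(y)$ one has $\phi[\mu,B]_t=\sum_i S^i_{\ell(x_i,t)}$, and every jump of a subordinator $S^i$ hit by $\ell(x_i,\cdot)$ produces a jump of $\phi[\mu,B]$. Consequently your assertion that monotonicity ``upgrades'' pointwise convergence $\bar\phi^\varepsilon_s\to\phi[\mu,B]_s$ to \emph{locally uniform} convergence also fails: a sequence of non-decreasing functions converging pointwise on a dense set to a non-decreasing limit with jumps does not converge uniformly (e.g.\ $f_\varepsilon(s)=\1\{s\ge \varepsilon\}\to\1\{s\ge 0\}$). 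The paper avoids this by working with the $M_1$ topology: pointwise convergence on a dense set of non-decreasing functions gives $M_1$ convergence of $\bar\phi^\varepsilon$ to $\phi[\mu,B]$, and then Whitt's Corollary~13.6.4, applicable because $\phi[\mu,B]$ is strictly increasing and unbounded (density plus infiniteness of $\mu$), yields \emph{uniform} convergence of the inverses $\bar\psi^\varepsilon\to\psi[\mu,B]$ together with continuity of $\psi[\mu,B]$. Once $\psi[\mu,B]$ is known to be continuous, the composition worry in your last paragraph disappears: one is composing a $J_1$-convergent sequence with a sequence of time-changes converging uniformly to a \emph{continuous} limit, which is exactly the setting of Whitt's Theorem~13.2.2, no alignment-of-jumps issue at all.

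Second, the assertion that the supports of $\tilde\ell^\varepsilon(\cdot,s)$ are ``eventually contained in a common compact since $\tilde\ell^\varepsilon\to\ell$ uniformly on compacts'' is not a valid implication; uniform convergence on compacts says nothing about mass escaping to infinity, and uniform convergence on all of $\mathbb R$ still does not force the supports to stay bounded. What actually forces it is that the range of the random walk up to time $\varepsilon^{-2}t$ is, after rescaling, a.s.\ contained in a random compact uniformly in $\varepsilon$; this follows from the functional convergence of the rescaled walk to Brownian motion (in the paper this is Lemma~\ref{l:Kt}, using the embedding and a LLN for the embedding times). You need this explicitly before the sandwich in your continuity lemma makes sense, since without it $\nu_\varepsilon(U_{f_\varepsilon})$ is not controlled by vague convergence on a fixed compact.

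With these two repairs — replace the locally-uniform claim by $M_1$ convergence and use the Whitt inversion lemma, and prove the common-compact-support fact rather than inferring it from local-uniform convergence — your proof becomes the paper's proof in a slightly different coupling dress.
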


The next theorem, which we will not need later in the paper, gives a
similar criteria for convergence of time changed Brownian motions. We present
it as it has intrinsic interest and because its proof is a simplified
version of the proof of Theorem~\ref{t:delayedrwconv}.

%th6.2 #&#
\begin{theorem}
\label{t:delayedbmconv}
Let $\mu^\varepsilon$, $\varepsilon>0$, be a family of infinite random
measures on $\mathbb H$, and let
$B$ be a Brownian motion independent of them.
Assume that, as
$\varepsilon\to0$,
$\mu^\varepsilon$ converges
vaguely in distribution to a dispersed, infinite, dense random measure
$\mu$.
Then the corresponding time changed Brownian motions $B[\mu
^\varepsilon ]$
converge to $B[\mu]$,
%
%e6.4 #&#
\begin{equation}
\bigl(B\bigl[\mu^{\varepsilon}\bigr]_{t}\bigr)_{t\ge0}
\xrightarrow{\varepsilon\to0} \bigl(B[\mu]_t\bigr)_{t\ge0},
\end{equation}
in distribution on $D(\mathbb R_+,J_1)$.
\end{theorem}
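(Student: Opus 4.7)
The plan is to condition on $B$, reduce to an almost-sure statement via Skorokhod representation, and view the map $\mu \mapsto B[\mu]$ as a composition of three continuous operations (evaluating $\mu$ on sublevel sets, right-continuous inversion, composition with $B$). First, since the space of locally finite measures on $\mathbb H$ endowed with the vague topology is Polish, the Skorokhod representation theorem lets us realize $(\mu^\varepsilon, \mu, B)$ on a common probability space so that $\mu^\varepsilon \to \mu$ vaguely almost surely, with $B$ a Brownian motion independent of $\mu$ and of all $\mu^\varepsilon$. I would then condition on $B$ and work for $\Pb$-almost every Brownian path, where the local time $\ell(x,s)$ is jointly continuous, compactly supported in $x$ for each fixed $s$, and strictly increasing in $s$ wherever $B$ visits.

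Step one: show pointwise convergence $\phi[\mu^\varepsilon, B]_s \to \phi[\mu, B]_s$ almost surely for each fixed $s \geq 0$. The set $U_{\ell(\cdot, s)}$ is relatively compact in $\mathbb H$ (by boundedness and compact support of $\ell(\cdot, s)$), and its topological boundary consists of the graph of the continuous function $\ell(\cdot, s)$ together with a piece of the $x$-axis, i.e.~the graph of the zero function. Dispersedness of $\mu$ applied to $\ell(\cdot, s)$ and to $f \equiv 0$ gives $\mu(\partial U_{\ell(\cdot, s)}) = 0$ almost surely, so the portmanteau theorem for vague convergence yields $\mu^\varepsilon(U_{\ell(\cdot, s)}) \to \mu(U_{\ell(\cdot, s)})$ almost surely.

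Step two: upgrade to convergence of the time changes and conclude. Since $\mu$ has full support (density) and $s \mapsto U_{\ell(\cdot, s)}$ is strictly nondecreasing at every $s$, the map $s \mapsto \phi[\mu, B]_s$ is strictly increasing; infiniteness of $\mu$ forces it to diverge to $\infty$. Thus $\phi[\mu, B]$ is continuous, strictly increasing, and surjective onto $\R_+$, so $\psi[\mu, B]$ is continuous. The pointwise convergence of the nondecreasing right-continuous functions $\phi[\mu^\varepsilon, B]$ to a continuous limit upgrades to locally uniform convergence by the standard Dini-type lemma for monotone functions, and a classical lemma on generalized inverses then gives locally uniform convergence $\psi[\mu^\varepsilon, B] \to \psi[\mu, B]$. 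Composing with the continuous path $B$ and using its local uniform continuity yields $B[\mu^\varepsilon] \to B[\mu]$ locally uniformly on $\R_+$, which is stronger than $J_1$-convergence in $D(\R_+)$. Integrating out the conditioning on $B$ gives the claimed distributional convergence.

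The main obstacle is step one: verifying the boundary condition that allows one to pass from vague convergence of measures to convergence of $\mu^\varepsilon(U_{\ell(\cdot, s)})$. This requires careful identification of $\partial U_{\ell(\cdot, s)}$ inside the locally compact space $\mathbb H = \R \times \R_+$ (the top piece being the graph of a continuous function, the bottom being contained in the $x$-axis) and using dispersedness on each piece separately. A secondary technical point is that the pointwise-in-$s$ almost-sure convergence of step one must be arranged to hold simultaneously in $s$ on a single probability-one event, which follows from continuity of the limit together with monotonicity via a countable dense subset of $s$-values and Fubini.
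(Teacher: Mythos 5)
Your overall architecture matches the paper's: Skorokhod representation, pointwise convergence of $\phi$ on a countable dense set of times via dispersedness, passage to the inverse, then composition with $B$. The care you take in identifying $\partial U_{\ell(\cdot,s)}$ (graph of $\ell(\cdot,s)$ together with the part of the $x$-axis outside the support, each handled by dispersedness applied to a $C_0$ function) is correct and slightly more explicit than the paper's treatment.

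However, there is a genuine gap in Step two: the claim that $\phi[\mu,B]$ is continuous is false in general, and nothing in your argument actually establishes it. Consider the FK trap measure $\mu_\FK$ of Example~\ref{ex:FK}: there the paper shows that, conditionally on $B$, the process $\phi[\mu_\FK,B]$ is a $\gamma$-stable subordinator, which has infinitely many jumps on every interval. Atoms of $\mu$ sitting strictly above the $x$-axis are ``discovered'' at the first time the local-time profile $\ell(\cdot,t)$ reaches them, and each produces a jump of $\phi$. Dispersedness gives $\mu(\partial U_{\ell(\cdot,t)})=0$ for each \emph{fixed} $t$ (and hence a countable dense set of $t$), which is exactly what you need for the pointwise convergence in Step one, but it does not control the jump of $\phi$ simultaneously at all uncountably many $t$. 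Because of this, the Dini/P\'olya-type upgrade from pointwise to locally uniform convergence of $\phi[\mu^\varepsilon,B]$ does not apply — that lemma requires the limit to be continuous — and so your route to uniform convergence of $\psi[\mu^\varepsilon,B]$ breaks down.

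The paper circumvents this by never asking for continuity of $\phi[\mu,B]$: pointwise convergence at a dense set of times of nondecreasing right-continuous functions is exactly $M_1$-convergence (Whitt, Theorems 12.5.1 and 13.6.3), and the inverse map is $M_1$-to-uniform continuous at limits that are merely strictly increasing and unbounded (Whitt, Corollary 13.6.4); continuity of the limit plays no role. Density of $\mu$ gives strict increase and infiniteness of $\mu$ gives unboundedness, which is all that is needed. If you replace ``continuous limit + Dini'' with ``$M_1$ convergence + Whitt 13.6.4 applied to a strictly increasing unbounded limit,'' your proof goes through; as written it does not.
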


\begin{pf}%{Proof of Theorem~\ref{t:delayedbmconv}}
As $\mu^{\varepsilon}$ converges vaguely in distribution to $\mu$, in
virtue of the Skorokhod representation theorem, there exist random
measures $(\bar{\mu}^{\varepsilon})_{\varepsilon>0}$ and $\bar{\mu
}$ on
$\mathbb H$ defined on a common probability space
$(\tilde{\Omega},\tilde{{\mathcal F}},\tilde{\mathbb{P}})$, such that
$\bar{\mu}^{\varepsilon}$ is distributed as $\mu^\varepsilon$,
$\bar{\mu}$
is distributed as $\mu$ and $\bar{\mu}^{\varepsilon}$ converges vaguely
to $\bar{\mu}$ as $\varepsilon\to0$, $\tilde{\Pb}$-a.s. Without
loss of generality, we can suppose that on the space
$(\tilde{\Omega},\tilde{{\mathcal F}},\tilde{\mathbb{P}})$ there is
defined a one-dimensional standard Brownian motion
$(B_t)_{t\ge0}$ independent of
$(\bar{\mu}^{\varepsilon})_{\varepsilon>0}$ and~$\bar{\mu}$.

First, we show that
$\phi[\bar{\mu}^{\varepsilon},B] \to\phi[\bar{\mu},B]$ in
$D(\mathbb{R}_+,M_1)$, $\tilde{\Pb}$-a.s. as $\varepsilon\to0$:
Using that $\bar{\mu}$ is a dispersed random measure,
%
%e6.5 #&#
\begin{equation}
\tilde\Pb\bigl[\bar{\mu}(\partial U_{\ell(\cdot,t)})=0, \forall0\le t\in \mathbb
Q\bigr] = 1,
\end{equation}
where $\partial A$ denotes the boundary of $A$ in $\mathbb H$.
Since $U_{\ell(\cdot,t)}$ is a bounded set, this implies that for all
$0\le t\in\mathbb Q$
%
%e6.6 #&#
\begin{equation}
\phi\bigl[\bar{\mu}^{\varepsilon},B\bigr]_t=\bar{
\mu}^{\varepsilon
}(U_{\ell(\cdot,t)}) \xrightarrow{\varepsilon\to0} \bar{
\mu}(U_{\ell(\cdot,t)})=\phi[\bar{\mu},B]_t, \qquad\tilde{\Pb}\mbox{-a.s.}
\end{equation}
Since, by \cite{Whi02}, Theorems 12.5.1 and 13.6.3, on the set of monotonous
functions the convergence on $D(\mathbb R_+,M_1)$ is equivalent to
pointwise convergence on a dense subset including $0$ and
since $\phi[\bar{\mu}^{\varepsilon},B]$ and $\phi[\bar{\mu},B]$ are
nondecreasing in $t$, we know that
%
%e6.7 #&#
\begin{equation}
\label{phiconvergence} \phi\bigl[\bar{\mu}^{\varepsilon},B\bigr]\to\phi[\bar{\mu},B]
\end{equation}
in
$D(\mathbb{R}_+,M_1)$, $\tilde\Pb$-a.s., as claimed.

Since the random measures $\bar{\mu}^{\varepsilon}$ and $\bar{\mu
}$ are
infinite, the functions $\phi[\bar{\mu}^{\varepsilon},B]$ and
$\phi[\bar{\mu},B]$ are unbounded. As, by hypothesis, $\bar{\mu}$ is
dense, then the function $\phi[\bar{\mu},B]$ will be strictly
increasing. Hence, \cite{Whi02}, Corollary~13.6.4, allows us to deduce
uniform convergence of $\psi[\bar{\mu}^\varepsilon,B]$ to
$\psi[\bar{\mu},B]$ from \eqref{phiconvergence}.

Using the continuity of the Brownian paths and
\cite{Whi02}, Theorem~13.2.2, we get that
$B[\bar{\mu}^\varepsilon]_t\to B[\bar{\mu}]_t$ in the $J_1$-topology.
$\bar{\mu}^\varepsilon$ and $\bar{\mu}$ are distributed as the
$\mu^\varepsilon$ and~$\mu$, respectively, the convergence in
distribution of $B[{\mu}^{\varepsilon}]$ to $B[{\mu}]$ follows.
\end{pf}

\begin{pf*}{Proof of Theorem~\ref{t:delayedrwconv}}
As $q(\varepsilon)\mathfrak{S}_{\varepsilon}(\mu^\varepsilon)$
converges vaguely in distribution to $\mu$, we can, in virtue of the
Skorokhod representation theorem, construct random measures
$(\bar{\mu}^{\varepsilon})_{\varepsilon>0}$ and $\bar{\mu}$
defined on a
common probability space
$(\tilde{\Omega},\tilde{{\mathcal F}},\tilde{\mathbb{P}})$, such that
$\bar{\mu}^{\varepsilon}$ is distributed as
$q(\varepsilon)\mathfrak{S}_\varepsilon(\mu^\varepsilon)$, $\bar
{\mu}$
is distributed as $\mu$, and $\bar{\mu}^{\varepsilon}$ converges
vaguely to $\bar{\mu}$ as $\varepsilon\to0$, $\tilde{\Pb}$-a.s. Without
loss of generality, we can suppose that on the space
$(\tilde{\Omega},\tilde{{\mathcal F}},\tilde{\mathbb{P}})$ there is
defined a one-dimensional standard Brownian motion
$(B_t)_{t\ge0}$ independent of
$(\bar{\mu}^{\varepsilon})_{\varepsilon>0}$ and $\bar{\mu}$.

Set $B^{\varepsilon}_t:=\varepsilon^{-1}B_{\varepsilon^2t}$. For each
$\varepsilon>0$, we define a sequence of stopping times
$(\sigma^{\varepsilon}_k)_{k=0}^{\infty}$ by $\sigma^{\varepsilon}_0:=0$,
%
%e6.8 #&#
\begin{equation}
\sigma^{\varepsilon}_k:=\inf{\bigl\{t>\sigma^{\varepsilon}_{k-1}
\dvtx B^{\varepsilon}_t\in\mathbb{Z}\setminus \bigl
\{B^{\varepsilon}_{\sigma^{\varepsilon}_{k-1}}\bigr\}\bigr\}}.
\end{equation}
Then the process $(Z^{\varepsilon}_k)_{k\in\mathbb{N}}$ defined by
$Z^{\varepsilon}_k:=B^{\varepsilon}_{\sigma^{\varepsilon}_k}$ is a
simple symmetric random walk on $\mathbb{Z}$. We define the local time
of $Z^{\varepsilon}$ as
$L^{\varepsilon}(x,s):=\sum_{i=0}^{\lfloor s \rfloor}1_{\{Z^\varepsilon_i=\lfloor x
\rfloor\}}$.
Define
%
%e6.9 #&#
\begin{equation}
\bar\phi_s^{\varepsilon}= q(\varepsilon)^{-1}
\mathfrak{S}_{\varepsilon^{-1}}\bigl(\bar{\mu }^{\varepsilon}\bigr)
(U_{L^{\varepsilon}(\cdot,s)}),\qquad s\ge0,\varepsilon>0.
\end{equation}
Note that
$q(\varepsilon)^{-1}\mathfrak{S}_{\varepsilon^{-1}}(\bar{\mu
}^{\varepsilon})$
is distributed as $\mu^\varepsilon$. Hence,
$(\bar\phi^\varepsilon_t)_{t\ge0}$ is distributed as
$(\mu^\varepsilon(U_{L^1(\cdot,t)}))_{t\ge0} = (\phi[\mu
^\varepsilon]_t)_{t\ge0}$.
Hence, denoting
$\bar\psi^\varepsilon_t:=\inf\{s>0\dvtx \bar\phi^{\varepsilon}_s>t\}
$, we see
that for each $\varepsilon>0$, the process
$(Z^{\varepsilon}_{\bar\psi^{\varepsilon}_t})_{t\geq0}$ is distributed
as $(Z[\mu^{\varepsilon}]_{t})_{t\geq0}$.

The proof of Theorem~\ref{t:delayedrwconv} relies on the following two
lemmas.

%le6.3 #&#
\begin{lemma}
\label{l:Kt}
For each $t\geq0$, there exists a random compact set $K_t$ such that
$\bigcup_{\varepsilon>0} \supp L^{\varepsilon}(\varepsilon
^{-1}\cdot,\varepsilon^{-2}t)$
is contained in $K_t$.
\end{lemma}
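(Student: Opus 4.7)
The plan is to bound the support of $L^\varepsilon(\varepsilon^{-1}\cdot,\varepsilon^{-2}t)$ by the range of $B$ over a controlled time interval, using the Skorokhod coupling $\varepsilon Z^\varepsilon_i=B_{\tau^\varepsilon_i}$ where $\tau^\varepsilon_i:=\varepsilon^2\sigma^\varepsilon_i$ is a stopping time of $B$. First I would observe that the support of $L^\varepsilon(\varepsilon^{-1}\cdot,\varepsilon^{-2}t)$ equals $\bigcup_{i\le \lfloor\varepsilon^{-2}t\rfloor}[\varepsilon Z^\varepsilon_i,\varepsilon Z^\varepsilon_i+\varepsilon)$, so the task reduces to controlling $\max_{i\le\lfloor\varepsilon^{-2}t\rfloor}|\varepsilon Z^\varepsilon_i|=\max_{i}|B_{\tau^\varepsilon_i}|\le \sup_{s\le T^\varepsilon}|B_s|$ with $T^\varepsilon:=\tau^\varepsilon_{\lfloor\varepsilon^{-2}t\rfloor}$. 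It therefore suffices to find a random $T^\ast<\infty$ such that $T^\varepsilon\le T^\ast$ for all $\varepsilon\in(0,1]$ $\tilde{\Pb}$-a.s., for then $K_t:=[-\sup_{s\le T^\ast}|B_s|-1,\sup_{s\le T^\ast}|B_s|+1]$ works by continuity of Brownian paths.

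By the strong Markov property, the increments $\tau^\varepsilon_i-\tau^\varepsilon_{i-1}$ are i.i.d.\ copies of $H_\varepsilon:=\inf\{s:|B_s|=\varepsilon\}$, with $\mathbb E[H_\varepsilon]=\varepsilon^2$ and $\Var(H_\varepsilon)=C\varepsilon^4$. Chebyshev then gives $\tilde{\Pb}(|T^\varepsilon-t|>1)\le C\varepsilon^2 t$; applied along $\varepsilon_k=2^{-k}$ together with Borel-Cantelli, this yields $\sup_k T^{\varepsilon_k}<\infty$ a.s. To pass from the subsequence to all $\varepsilon\in(0,1]$, I would split the range: for $\varepsilon\in[\varepsilon_{k_0},1]$ with $k_0$ deterministic, the crude bound $|\varepsilon Z^\varepsilon_i|\le \varepsilon\lfloor\varepsilon^{-2}t\rfloor\le t/\varepsilon_{k_0}$ already suffices; for $\varepsilon\in(\varepsilon_{k+1},\varepsilon_k]$ with $k\ge k_0$, one compares $T^\varepsilon$ to $T^{\varepsilon_k}$ via a nested-lattice coupling, using stochastic domination $H_\varepsilon\le_{\mathrm{st}} H_{\varepsilon_k}$ while accounting for the four-fold increase in the number of steps.

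The main obstacle is precisely this last step: $T^\varepsilon$ is not visibly monotone in $\varepsilon$, so a geometric-subsequence argument alone does not immediately transfer to the continuum. A cleaner alternative, avoiding subsequences altogether, is to invoke Doob's upcrossing inequality for the martingale $B$: this bounds the number of $\varepsilon$-lattice crossings by $B$ on any fixed interval $[0,T]$ uniformly in $\varepsilon$ and in the interval chosen, from which the bound $T^\varepsilon\le T$ follows directly once the total count of $\varepsilon$-lattice transitions exceeds $\lfloor\varepsilon^{-2}t\rfloor$. Either route produces the required a.s.\ finite $T^\ast$, and hence the random compact enclosure $K_t$.
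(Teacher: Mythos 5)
Your approach is the same as the paper's: control $T^\varepsilon=\varepsilon^2\sigma^\varepsilon_{\lfloor\varepsilon^{-2}t\rfloor}$ uniformly over $\varepsilon>0$, then take $K_t$ to be an interval containing the range of $B$ on $[0,\sup_\varepsilon T^\varepsilon]$ (the paper's choice $K_t=\supp\ell(\cdot,C)$ is the same thing). The paper reaches the uniform bound in one line by invoking ``the strong law of large numbers for triangular arrays''; your Chebyshev and Borel--Cantelli computation along $\varepsilon_k=2^{-k}$, together with the crude bound $|\varepsilon Z^\varepsilon_i|\le t/\varepsilon$ for $\varepsilon$ bounded away from $0$, is exactly what that invocation amounts to, and you have correctly flagged the one genuinely delicate point the paper glosses over: obtaining the bound simultaneously over the continuum of $\varepsilon$, not just along a sequence, given that $T^\varepsilon$ has no visible monotonicity in $\varepsilon$.

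Neither of your two proposed repairs is complete as written, though. The nested-lattice idea is the right one but needs more care: for $\varepsilon\in(\varepsilon_{k+1},\varepsilon_k]$ each step time of the $\varepsilon$-skeleton is stochastically dominated by an independent copy of $H_{\varepsilon_k}$, giving stochastic domination of $T^\varepsilon$ by a sum of roughly $4\lfloor\varepsilon_k^{-2}t\rfloor$ such copies; but stochastic domination is a statement about each fixed $\varepsilon$ separately, and one must still argue that the resulting almost-sure bound holds simultaneously over the uncountable interval $(\varepsilon_{k+1},\varepsilon_k]$, which requires an additional pathwise observation. The Doob upcrossing alternative is in the wrong direction: to conclude $T^\varepsilon\le T$ one needs to show that $B$ has made \emph{at least} $\lfloor\varepsilon^{-2}t\rfloor$ $\varepsilon$-lattice transitions by time $T$, i.e.\ a \emph{lower} bound on the crossing count, whereas Doob's inequality bounds the (expected) number of upcrossings from \emph{above}. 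Moreover the crossing count on $[0,T]$ scales like $\varepsilon^{-2}T$ as $\varepsilon\to 0$, so it cannot be bounded ``uniformly in $\varepsilon$'' in any useful sense, and an expectation bound does not yield the required almost-sure statement anyway.
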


\begin{pf}
By the strong Markov property for the Brownian motion $B$, for each
$\varepsilon>0$,
$(\sigma^{\varepsilon}_k-\sigma^\varepsilon_{k-1})_{k>0}$ is an
i.i.d. sequence with
$\tilde{\mathbb E}[\sigma^{\varepsilon}_{i}-\sigma^{\varepsilon}_{i-1}]=1$.
Thus, by the strong law of large numbers for triangular arrays,
$\tilde{\Pb}$-almost surely, there exists a (random) constant $C$
such that
$\varepsilon^{2}\sigma^{\varepsilon}_{\lfloor\varepsilon
^{-2}t\rfloor}\leq C$
for all $\varepsilon>0$. Thus, for each $\varepsilon>0$, the support
of $L^{\varepsilon}(\varepsilon^{-1}\cdot,\varepsilon^{-2}t)$ is
contained in the support of $\ell(\cdot,C)$. Therefore, it is
sufficient to choose $K_t=\supp(\ell(\cdot,C))$.
\end{pf}

%le6.4 #&#
\begin{lemma}
\label{l:convdereloj}
$(q(\varepsilon)\bar\phi^\varepsilon_{\varepsilon^{-2}t})_{t\ge0}
\xrightarrow{\varepsilon\to0} (\phi[\bar{\mu},B]_t)_{t\ge0}$
$\tilde\Pb$-a.s. on $(D(\mathbb{R}_+),M_1)$.
\end{lemma}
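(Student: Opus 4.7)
The plan. Starting from the defining identity
\begin{equation*}
q(\varepsilon)\bar\phi^\varepsilon_{\varepsilon^{-2}t}
=\mathfrak S_{\varepsilon^{-1}}(\bar\mu^\varepsilon)(U_{L^\varepsilon(\cdot,\varepsilon^{-2}t)})
=\bar\mu^\varepsilon(U_{h^\varepsilon_t}),\qquad
h^\varepsilon_t(x):=\varepsilon L^\varepsilon(x/\varepsilon,\varepsilon^{-2}t),
\end{equation*}
I would first establish the $\tilde\Pb$-a.s.\ uniform convergence $h^\varepsilon_t\to\ell(\cdot,t)$ on the compact set $K_t$ from Lemma \ref{l:Kt}. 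This is the classical invariance principle for local times of simple random walks, and here it comes essentially for free because $Z^\varepsilon$ is realised via the Skorokhod embedding $\sigma^\varepsilon_k$ into $B^\varepsilon$: the visit counts $L^\varepsilon(\lfloor x/\varepsilon\rfloor,\varepsilon^{-2}t)$ are comparable to the Brownian local time $\ell^{B^\varepsilon}(\lfloor x/\varepsilon\rfloor,\sigma^\varepsilon_{\lfloor\varepsilon^{-2}t\rfloor})=\varepsilon^{-1}\ell(\varepsilon\lfloor x/\varepsilon\rfloor,\varepsilon^2\sigma^\varepsilon_{\lfloor\varepsilon^{-2}t\rfloor})$, and the joint continuity of $\ell$ together with the LLN $\varepsilon^2\sigma^\varepsilon_{\lfloor\varepsilon^{-2}t\rfloor}\to t$ already used in the proof of Lemma \ref{l:Kt} finishes the job.

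Fix $t\ge 0$. Uniform convergence, together with $\supp h^\varepsilon_t\cup\supp\ell(\cdot,t)\subseteq K_t$, gives for every $\delta>0$ and all sufficiently small $\varepsilon$ the sandwich
\begin{equation*}
U_{(\ell(\cdot,t)-\delta)_+}\cap(K_t\times\R_+)\ \subseteq\ U_{h^\varepsilon_t}\ \subseteq\ U_{\ell(\cdot,t)+\delta}\cap(K_t\times\R_+).
\end{equation*}
Replacing $\ell(\cdot,t)\pm\delta$ by continuous, compactly supported approximants that coincide with them on a slightly larger neighbourhood of $K_t$, the dispersedness of $\bar\mu$ forces both sandwich sets to be $\bar\mu$-continuity sets $\tilde\Pb$-a.s. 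Combining this with the a.s.\ vague convergence $\bar\mu^\varepsilon\to\bar\mu$ yields
\begin{equation*}
\bar\mu(U_{(\ell(\cdot,t)-\delta)_+})\ \le\ \liminf_{\varepsilon\to0}\bar\mu^\varepsilon(U_{h^\varepsilon_t})\ \le\ \limsup_{\varepsilon\to0}\bar\mu^\varepsilon(U_{h^\varepsilon_t})\ \le\ \bar\mu(U_{\ell(\cdot,t)+\delta}).
\end{equation*}
Sending $\delta\downarrow 0$ along a suitable sequence and invoking dispersedness one more time at the graph of $\ell(\cdot,t)$ itself, both extremes converge to $\bar\mu(U_{\ell(\cdot,t)})=\phi[\bar\mu,B]_t$.

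Running this pointwise argument simultaneously on a countable dense subset of $\R_+$ containing $0$ (by intersecting countably many full-$\tilde\Pb$-measure events) gives pointwise convergence on a dense set. Since both $t\mapsto q(\varepsilon)\bar\phi^\varepsilon_{\varepsilon^{-2}t}$ and $t\mapsto\phi[\bar\mu,B]_t$ are nondecreasing, the $M_1$-convergence criterion for monotone functions (\cite[Thm.~12.5.1]{Whi02}), already invoked in the proof of Theorem~\ref{t:delayedbmconv}, immediately upgrades this pointwise statement to convergence in $D(\R_+,M_1)$, $\tilde\Pb$-a.s. The main obstacle I anticipate is the sandwich step: vague convergence of $\bar\mu^\varepsilon$ only controls fixed continuity sets, whereas the hypographs $U_{h^\varepsilon_t}$ are random and $\varepsilon$-dependent, so dispersedness of $\bar\mu$ must do the work of trapping $\bar\mu(\partial U_{h^\varepsilon_t})$ between vanishing boundary masses as $\delta\downarrow 0$.
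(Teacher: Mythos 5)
Your proposal is correct and follows essentially the same route as the paper: same starting identity, same use of the local-time invariance principle (the paper simply cites Borodin rather than re-deriving it from the Skorokhod embedding), same passage through a continuous compactly supported approximant of $\ell(\cdot,t)\pm\delta$ on $K_t$ combined with dispersedness and vague convergence, and the same upgrade from pointwise convergence on $\mathbb Q_+$ to $M_1$-convergence via monotonicity. The only cosmetic difference is that you state the two-sided sandwich explicitly whereas the paper writes out the upper bound and dismisses the lower bound as "similar."
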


\begin{pf}
It is easy to see that
%
%e6.10 #&#
\begin{equation}
\label{e:1} q(\varepsilon)\bar\phi_{t\varepsilon^{-2}}^{\varepsilon} =
\mathfrak{S}_{\varepsilon^{-1}} \bigl(\bar{\mu}^{\varepsilon}\bigr)
(U_{L^\varepsilon(\cdot,\varepsilon^{-2}t)}) =\bar{\mu}^{\varepsilon}
(U_{\varepsilon L^\varepsilon(\varepsilon^{-1}\cdot,\varepsilon^{-2}t)}).
\end{equation}
By \cite{borodin}, Chapter IV, Theorem~2.1, for each $t\geq0$,
$\tilde{\mathbb P}$-a.s.,
$\varepsilon L^{\varepsilon}(\varepsilon^{-1}x,\varepsilon^{-2}t)\to
\ell(x,t) $
uniformly in $x$, as $\varepsilon\to0$. Thus for any $\eta>0$ there exists
$\varepsilon_\eta$ such that, if $\varepsilon<\varepsilon_\eta$ we
will have that
$\varepsilon L^\varepsilon(\varepsilon^{-1}\cdot,\varepsilon
^{-2}t)\leq\ell(\cdot,t)+\eta$.
Note that $\ell(\cdot,t)+\eta$ is not compactly supported. Let
$h_{\eta}\dvtx \mathbb H\to\mathbb R_+$ be a continuous function which
for every $t\ge0$ coincides with $\ell(\cdot,t)+\eta$ on $K_t$,
$h_\eta(\cdot,t)\leq\eta$ outside $K_t$, and $h_{\eta}(\cdot,t)$ is
supported on $[\inf K_t-\eta, \sup K_t+\eta]$. Using Lemma~\ref{l:Kt}, we find that
$\varepsilon L^\varepsilon(\varepsilon^{-1}\cdot,\varepsilon
^{-2}t)\leq h_\eta(\cdot,t)$.
Thus,
%
%e6.11 #&#
\begin{equation}
\label{e:2} \bar{\mu}^{\varepsilon} (U_{\varepsilon L^\varepsilon(\varepsilon^{-1}\cdot,\varepsilon^{-2}t)}) \leq\bar{
\mu}^\varepsilon(U_{h_\eta(\cdot,t)}).
\end{equation}
As $\bar{\mu}$ is a dispersed random measure, for fixed $t$,
$\bar{\mu}(\partial U_{h_\eta(\cdot,t)}) =\bar{\mu}(\partial
U_{\ell(\cdot,t)})=0$,
$\tilde{\Pb}$-a.s. For any $\delta>0$ and all $\varepsilon$ small
enough (depending on $\delta$), as $\bar{\mu}^{\varepsilon}$
converges vaguely to $\bar{\mu}$,
%
%e6.12 #&#
\begin{equation}
\label{e:3} \bar{\mu}^\varepsilon(U_{h_\eta(\cdot,t)}) \leq\bar{
\mu}(U_{h_\eta(\cdot,t)})+\delta/2.
\end{equation}
For each $\delta> 0$, there exists $\eta>0$ such that
$\bar{\mu}(U_{h_\eta(\cdot,t)})\leq\bar{\mu}(U_{\ell(\cdot,t)})+\delta/2$.
Combining this with \eqref{e:1}--\eqref{e:3}, we find that
%
%e6.13 #&#
\begin{equation}
\limsup_{\varepsilon\to0} q(\varepsilon)\bar\phi_{t\varepsilon^{-2}}^{\varepsilon}
=\limsup_{\varepsilon\to0} \bar{\mu}^{\varepsilon} (U_{\varepsilon L^\varepsilon(\varepsilon^{-1}\cdot,\varepsilon^{-2}t)})
\leq\phi[\bar{\mu},B]_t.
\end{equation}
A lower bound can be obtained in a similar way. Hence, after taking
union over $0\le t \in\mathbb Q$,
%
%e6.14 #&#
\begin{equation}
\tilde\Pb\Bigl[\lim_{\varepsilon\rightarrow0} q(\varepsilon) \bar
\phi_{\varepsilon^{-2}t}^{\varepsilon}= \phi[\bar\mu,B]_t, \forall0\le
t\in\mathbb Q\Bigr] =1.
\end{equation}
Since $\bar\phi^{\varepsilon}_t$ and $\phi[\bar{\mu},B]$ are
nondecreasing in $t$,
$(q(\varepsilon)\bar\phi^{\varepsilon}_{\varepsilon^{-2}t})_{t\ge0}$
converges to $(\phi[\bar{\mu},B]_t)_{t\ge0}$, $\tilde\Pb$-a.s. on
$(D(\mathbb{R}_+),M_1)$, completing the proof of the lemma.
\end{pf}

Theorem~\ref{t:delayedrwconv} then follows from
Lemma~\ref{l:convdereloj} by repeating the arguments of the last paragraph
in the proof of Theorem~\ref{t:delayedbmconv}.
\end{pf*}

%s6.2 #&#
\subsection{Convergence of trapped processes} %<<<2

The class of time changed random walks is very large, and the associated
convergence criteria rather general. Applying these criteria, however,
requires to check the convergence of the underlying random measures,
which might be complicated in many situations.

As we have seen, the underlying random measures of trapped processes
(TRW, TBM) satisfy additional assumptions. This will make checking their
convergence easier than in the general case.

%pr6.5 #&#
\begin{proposition}
\label{p:Levymeasureconv}
\textup{(i)} Let $\mu^\varepsilon$, $\mu$ be L\'evy trap measures with independent
increments (i.e., they are trap measures of some TBMs). Then
$\mu^\varepsilon$ converges vaguely in distribution to $\mu$, iff
$\mu^\varepsilon(I\times[0,1])$ converges in distribution to
$\mu(I\times[0,1])$ for every compact interval $I=[a,b]$ such that
$\mu(\{a,b\}\times\mathbb R_+)=0$, $\tilde{\mathbb P}$-a.s.

\textup{(ii)} The same holds true if
$\mu^\varepsilon= \mathfrak S_\varepsilon(\nu^\varepsilon)$ for a
family of trap measures $\nu^\varepsilon$ of some TRWs.
\end{proposition}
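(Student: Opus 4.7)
The plan is to reduce vague convergence in distribution of the random measures to convergence of Laplace functionals, and then use the L\'evy-plus-independent-increments structure to pull this reduction further down to the single statistic $\mu^\varepsilon(I\times[0,1])$ appearing in the statement.

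The $(\Rightarrow)$ direction of (i) follows from the standard criterion (cf.~\cite{rmeasures}) that vague convergence in distribution of random measures implies convergence in distribution of $\mu^\varepsilon(B)$ for every relatively compact Borel set $B$ with $\mu(\partial B)=0$ almost surely. Applied to $B=I\times[0,1]$ with $I=[a,b]$ satisfying $\mu(\{a,b\}\times\R_+)=0$ a.s., the vertical portion of the boundary has zero mass by hypothesis, and the horizontal portion $[a,b]\times\{0,1\}$ has zero mass because $\mu\langle [a,b]\rangle$ is a L\'evy process and so vanishes at $0$ and has no fixed-time discontinuities.

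For the $(\Leftarrow)$ direction of (i), I would invoke the Laplace functional characterization of vague convergence in distribution: it suffices to prove $\tilde{\E}[e^{-\mu^\varepsilon(f)}]\to\tilde{\E}[e^{-\mu(f)}]$ for every $f\in C_c^+(\mathbb H)$. Approximating such $f$ from above and below by simple functions supported on disjoint rectangles $I_j\times[s_k,t_k]$ whose spatial endpoints are drawn from the co-countable set of $a$ with $\mu(\{a\}\times\R_+)=0$ a.s., the task reduces to controlling the joint Laplace transform of $(\mu^\varepsilon(I_j\times[s_k,t_k]))_{j,k}$. Two structural facts then close the argument: spatial independence factors this joint Laplace transform over the disjoint $I_j$; and for fixed $I$ the process $t\mapsto\mu^\varepsilon(I\times[0,t])$ is a L\'evy process with some Laplace exponent $\psi^\varepsilon_I$, so convergence in distribution of the marginal at $t=1$ forces pointwise convergence $\psi^\varepsilon_I\to\psi_I$, whence $\tilde{\E}[e^{-\lambda\mu^\varepsilon(I\times[0,t])}]=e^{-t\psi^\varepsilon_I(\lambda)}\to e^{-t\psi_I(\lambda)}$ for every $t\geq 0$, and in fact all finite-dimensional distributions of $\mu^\varepsilon\langle I\rangle$ converge to those of $\mu\langle I\rangle$.

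For part (ii), $\mu^\varepsilon=\mathfrak{S}_\varepsilon(\nu^\varepsilon)$ is not L\'evy in the time variable, but it retains spatial independence, since the columns of a TRW trap measure at distinct sites $x\in\Z$ are independent by construction, and the limit $\mu$ is still L\'evy. The same Laplace-functional strategy applies: the spatial factorization step is unchanged, while in the time direction $t\mapsto\mu^\varepsilon(I\times[0,t])$ is a rescaled compound-random-walk-type process with i.i.d.~increments across $\varepsilon\N$; its convergence in distribution at $t=1$ to the infinitely divisible $\mu(I\times[0,1])$ upgrades, by the standard triangular-array L\'evy invariance principle, to convergence of all finite-dimensional distributions to those of the L\'evy process $\mu\langle I\rangle$. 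The main obstacle is precisely this upgrade in case (ii): one must argue that single-marginal convergence of a discrete, non-L\'evy pre-limit pins down its joint behavior in time, which hinges on the infinite divisibility of the limit combined with the stationarity of the rescaled increments coming from the i.i.d.~column structure of $\nu^\varepsilon$.
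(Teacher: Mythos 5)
Your argument is correct and, at its core, identical to the paper's: both reduce vague convergence in distribution to the statistic $\mu^\varepsilon(I\times[0,1])$ by exploiting spatial independence to factor over disjoint intervals and the L\'evy (respectively i.i.d.\ triangular-array) structure in the temporal direction to upgrade a single marginal to all finite-dimensional distributions of $\mu^\varepsilon\langle I\rangle$. The only cosmetic difference is the standard criterion invoked at the top: you work with Laplace functionals and approximate $f\in C_c^+(\mathbb H)$ by simple functions, while the paper shortcuts this step by directly applying the Kallenberg DC-semiring criterion (Proposition~\ref{p:kalconv}) to the collection of rectangles $I\times[c,d]$.
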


\begin{pf}
We will use the well-known criteria for the convergence of random measures
recalled in Proposition~\ref{p:kalconv} in the \hyperref[app]{Appendix}.
When $\mu$ is a L\'evy trap measure with independent increments, the
distribution of $\mu([a,b]\times[c,d])$, $a,b\in\mathbb R$,
$c,d\in\mathbb R_+$, is determined by the distribution of
$\mu([a,b]\times[0,1])$, since by definition $\mu\langle
[a,b]\rangle$
is a L\'evy process. In particular, the assumptions of the proposition
imply the convergence in distribution of $\mu^\varepsilon(A)$ to
$\mu(A)$ for every $A\in\mathcal A$ where $\mathcal A$ is the set of
all rectangles $I\times[c,d]$ with $I$ as in the statement of the
proposition and $d\ge c\ge0$.

As $\mu\langle I\rangle$ is a L\'evy process, we have
$\mathcal A\subset\mathcal T_\mu$ [see \eqref{e:Tmu} for the notation].
Moreover, it is easy to see that
$\mathcal A$ is a DC semiring. The fact that $\mu^\varepsilon$ are
measures with independent increments combined with the well-known
criteria for vague convergence in distributions of random measures (see
Proposition~\ref{p:kalconv} in the \hyperref[app]{Appendix})
then implies claim (i).

The proof of claim (ii) is analogous. It suffices to observe that the
distribution of $\nu^\varepsilon$ is determined by distributions of
$\mu^\varepsilon([a,b]\times[0,1])$, $a,b\in\mathbb R$, as well.
\end{pf}

We apply this proposition in few examples.

%ex6.6 #&#
\begin{example}[(Stone's theorem)]
Let $\rho_\varepsilon\in M(\mathbb R)$, $\varepsilon>0$, be a family
of measures on $\mathbb{R}$. Assume that, as $\varepsilon\to0$,
$\rho_{\varepsilon}$ converges vaguely to a measure
$\rho\in M(\mathbb R)$ whose support is $\mathbb R$. Set
$\mu_\varepsilon= \rho_\varepsilon\otimes\Leb_+$,
$\mu= \rho\otimes\Leb_+$. We have seen in
Example~\ref{ex:speedmeasure} that $\mu_\varepsilon$ and $\mu$ are
L\'evy trap measures with independent increments, and that
$B[\mu_\varepsilon]$ and $B[\mu]$ are a time changes of Brownian
motion with speed measure $\rho_{\varepsilon}$ and $\rho$,
respectively. Let $a,b$ be such that $\rho(\{a,b\})=0$, and thus
$\mu(\{a,b\}\times\mathbb R_+)=0$. By vague convergence of
$\rho_\varepsilon$ to $\rho$,
$\mu_\varepsilon([a,b]\times[0,1])\to\mu([a,b]\times[0,1])$. Also,
$\mu$ is a dispersed, infinite and dense random measure (because the
support of $\rho$ is $\R$). Therefore, by
Proposition~\ref{p:Levymeasureconv}, $\mu_\varepsilon$ converges
vaguely to $\mu$, and thus, by Theorem~\ref{t:delayedbmconv},
$B[\mu_\varepsilon]$ converges in distribution to $B[\mu]$ in
$D(\mathbb R_+,J_1)$.

This result is well known and was originally obtained by Stone
\cite{stone}. His result states that convergence of speed measures
implies convergence of the corresponding time-changed Brownian motions.
Thus, Theorem~\ref{t:delayedbmconv} can be viewed a generalization of
Stone's result.
\end{example}

%ex6.7 #&#
\begin{example}
Let $\mu$, $Z[\mu]$ be as in Example~\ref{ex:MWCTRW} (a continuous-time
random walk \`a la Montroll--Weiss). Then, using
Theorem~\ref{t:delayedrwconv} and Proposition~\ref{p:Levymeasureconv},
we can prove that
$(\varepsilon Z[\mu]_{\varepsilon^{-2/\gamma}t})_{t\ge0}$ converges in
distribution to the FK process. (This result
was previously obtained in \cite{mula}.)

Indeed, let $K_\gamma$ be a positive stable law of index $\gamma$. It
is easy to see that $\mu$ is a trap measure corresponding to a TRW.
Example~\ref{ex:FK} implies that FK process is a trapped Brownian
motion whose corresponding trap measure $\mu_\FK$ is L\'evy. Moreover,
from the fact that $\mu_\FK$ is defined via Poisson point process whose
intensity has no atoms, we see that for every $a\in\mathbb R$,
$\mu_\FK(a\times\mathbb R_+)=0$, $\tilde\Pb$-a.s.

To apply Proposition~\ref{p:Levymeasureconv}, we should check that
$\varepsilon^{2/\gamma}\mathfrak{S}_\varepsilon(\mu)([a,b]\times[0,1])$
converges in distribution to $(b-a)^{1/\gamma}K_{\gamma}$.
However,
%
%e6.15 #&#
\begin{equation}
\label{sum} \varepsilon^{2/\gamma}\mathfrak{S}_\varepsilon(\mu)
\bigl([a,b]\times[0,1]\bigr) =\varepsilon^{2} \sum
_{x=a\varepsilon^{-1}}^{b\varepsilon^{-1}}\sum_{j=1}^{\varepsilon^{-1}}
s_x^j,
\end{equation}
where, by their definition in Example~\ref{ex:MWCTRW}, the
$(s_x^j)_{x\in\Z,j\in\N}$ are i.i.d. random variables in the
domain of attraction of the $\gamma$-stable law. The classical result
on convergence of i.i.d. random variables (see, e.g., \cite{gnedenko})
yields that \eqref{sum} converges in distribution to
$(b-a)^{1/\gamma}K_\gamma$. On the other hand, it is easy to see that
$\mu_\FK$ is an infinite, dispersed and dense random measure. The
convergence of processes then follows from Theorem~\ref{t:delayedrwconv}.
\end{example}

We finish this section with a lemma that shows that the trap measures of
TBMs are always dispersed, which simplifies checking the assumptions of
Theorem~\ref{t:delayedrwconv}.

%le6.8 #&#
\begin{lemma}
\label{l:borde}
Let $ \mu$ be a L\'evy trap measure with independent increments
defined on a
probability space $(\Omega,{\mathcal F},\Pb)$ and
$f\in C_0(\mathbb R, \mathbb R_+)$. Then $\Pb$-a.s.
%
%e6.16 #&#
\begin{equation}
\mu\bigl(\bigl\{(x,y)\in\mathbb H\dvtx y=f(x)\bigr\}\bigr)=0,
\end{equation}
that is $\mu$ is a dispersed trap measure.
\end{lemma}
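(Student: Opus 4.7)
The plan is to decompose $\mu$ via the L\'evy--Khintchine representation of its increments so that every piece is absolutely continuous in the time direction $y$, and therefore cannot charge the graph $\{(x,y):y=f(x)\}$ of a function in $C_0(\R,\R_+)$.

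First, for each bounded Borel $A\subset \R$, the subordinator $\mu\langle A\rangle$ has unique L\'evy--Khintchine data $(\mathtt d(A),\Pi_A)$, with $\mathtt d(A)\ge 0$ the drift and $\Pi_A$ the L\'evy measure on $(0,\infty)$. The independent-increments property of $\mu$ in $A$, combined with the uniqueness of the L\'evy--Khintchine data, forces additivity of both $A\mapsto \mathtt d(A)$ and $A\mapsto \Pi_A(B)$ for each Borel $B\subset(0,\infty)$. A standard extension argument then yields a deterministic measure $\mathtt d$ on $\R$ and a deterministic $\sigma$-finite measure $\tilde \Pi$ on $\R\times(0,\infty)$ with $\mathtt d(A)=\mathtt d(A)$ and $\tilde\Pi(A\times B)=\Pi_A(B)$.

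Using the L\'evy--It\^o decomposition of each $\mu\langle A\rangle$ together with the independence in $A$, one shows that $\mu$ decomposes as $\mu=\mu^d+\mu^j$, where $\mu^d=\mathtt d\otimes \Leb_+$ is deterministic and $\mu^j=\sum_i z_i\,\delta_{(x_i,y_i)}$ for a Poisson point process $(x_i,y_i,z_i)_{i\in\mathbb N}$ on $\R\times\R_+\times(0,\infty)$ with intensity $\tilde\Pi(dx,dz)\otimes \Leb_+(dy)$. Given this, the drift part gives $\mu^d(\{(x,y):y=f(x)\})=\int \mathtt d(dx)\,\Leb_+(\{f(x)\})=0$ deterministically, while for the jump part, for every compact $K\supset\supp f$ and all $0<c<C<\infty$,
\begin{equation*}
  \E\bigl[\#\{i:x_i\in K,\; z_i\in(c,C),\; y_i=f(x_i)\}\bigr]
  =\int_{K\times(c,C)}\tilde\Pi(dx,dz)\,\Leb_+(\{f(x)\})=0,
\end{equation*}
so almost surely no atom of $\mu^j$ lies on the graph of $f$; exhausting $\R\times(0,\infty)$ in $(x,z)$ by such boxes gives $\mu^j(\{(x,y):y=f(x)\})=0$ a.s., and combining with the drift part completes the proof.

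The main obstacle is establishing the decomposition, specifically identifying the joint law of the atoms of $\mu$ (positions in $x$, times in $y$, and sizes $z$) as a genuine Poisson point process on $\R\times\R_+\times(0,\infty)$ with the claimed product intensity. Once that structural statement is in place --- which follows by combining the L\'evy--It\^o representation of each slice $\mu\langle A\rangle$ with the independent-increments property in $A$ through a monotone class argument on joint Laplace functionals --- the conclusion is immediate from the absolute continuity of the intensity in the $y$ direction, which makes the graph of any single-valued function a null set.
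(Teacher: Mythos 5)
Your proof is correct, but it takes a heavier and genuinely different route than the paper. The paper's own argument is a short covering one: cover the graph of $f$ by dyadic boxes $B^{n,i}=I^{n,i}\times[m^{n,i},M^{n,i}]$ whose heights are at most $\delta$ once $n$ is large (by uniform continuity of $f$); use stationarity of the L\'evy process $\mu\langle I^{n,i}\rangle$ to dominate $\mu(B^{n,i})$ in distribution by $\mu(I^{n,i}\times[0,\delta])$; sum over $i$ using independence of increments to get $\mu(\cup_i B^{n,i})$ stochastically dominated by $\mu(J\times[0,\delta])$ for a fixed compact $J$; then let $\delta\to0$ and use the monotonicity of $n\mapsto \mu(\cup_i B^{n,i})$ to upgrade convergence in distribution to a.s.\ convergence. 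No global representation of $\mu$ is needed. You instead derive a L\'evy--It\^o decomposition of the whole measure, $\mu=\mathtt d\otimes\Leb_+ + \sum_i z_i\delta_{(x_i,y_i)}$ with the atom process a Poisson point process whose intensity factors as $\tilde\Pi(dx,dz)\otimes\Leb_+(dy)$, and then read off the conclusion from the absolute continuity of the $y$-marginal. That decomposition is in fact correct --- the L\'evy time-homogeneity of each slice $\mu\langle A\rangle$ forces the Lebesgue $y$-marginal, and stochastic continuity of L\'evy processes rules out fixed atoms --- and it is essentially the relevant special case of Kallenberg's Theorem~9.23, which the paper invokes later (in the classification theorem) but deliberately does not need here. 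The step you flag as the main obstacle, globalizing the per-slice L\'evy--It\^o representations together with independence in $A$ into a single Poisson structure via a monotone-class argument on joint Laplace functionals, is real work that you only sketch; the paper sidesteps it entirely, using only uniform continuity of $f$, stationarity and right-continuity at $0$ of the slices $\mu\langle A\rangle$, and independence of increments. Your approach buys a stronger structural statement than the lemma requires; the paper's buys brevity and elementariness.
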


\begin{pf}
Let $I^{n,i}=[(i-1)2^{-n},i2^{-n})$, and set
$m^{n,i}=\inf\{f(x)\dvtx x\in I^{n,i}\}$,
$M^{n,i}=\sup\{f(x)\dvtx x\in I^{n,i}\}$. Let
$B^{n,i}$ be the boxes
%
%e6.17 #&#
\begin{equation}
B^{n,i}:=I^{n,i}\times\bigl[m^{n,i}, M^{n,i}
\bigr].
\end{equation}
Then for all $n$ we have
%
%e6.18 #&#
\begin{equation}
\label{e:boxes} \bigl\{(x,y)\in\mathbb H\dvtx y=f(x)\bigr\} \subset\bigcup
_i B^{n,i},
\end{equation}
and $B^{n+1,2i-1}\cup B^{n+1,2i} \subset B^{n,i}$,
which implies that $\mu(\bigcup_i B^{n,i})$ is nonincreasing in $n$.

The uniform continuity of $f$ implies that for each $\delta>0$, there
exists $n_\delta$, such that for each $n>n_\delta$ and $i\in\mathbb Z$,
$M^{n,i}-m^{n,i}<\delta$. Since $\mu(B^{n,i})$ is distributed as
$\mu( I^{n,i} \times[0,M^{n,i}-m^{n,i}])$,
$\mu(B^{n,i})$ is stochastically dominated by
$\mu({ I^{n,i}\times[0,\delta]})$.
If $I^{n,i}\cap\supp f=\varnothing$, then
$M^{n,i}=m^{n,i}=0$. Hence, writing $J$ for the $1$-neighborhood
of $\supp f$, in the stochastic domination sense,
%
%e6.19 #&#
\begin{equation}
\mu \biggl(\bigcup_i B^{n,i} \biggr)\leq
\mu\bigl(J \times[0,\delta]\bigr).
\end{equation}
Since
${\mu(J \times[0,\delta])\xrightarrow{\delta\to0}0}$, $\Pb$-a.s.,
we see that $\mu(\bigcup_i B^{n,i})\xrightarrow{n\to\infty} 0$
in distribution. Together with the monotonicity of
$\mu(\bigcup_i B^{n,i})$, this implies that the convergence holds
$\Pb$-a.s. The lemma follows using \eqref{e:boxes}.
\end{pf}

%s7 #&#
\section{Convergence of RTRW to RTBM} %<<<1
\label{s:limits}
\label{s:iidconvergence}

In this section, we give the proofs of the convergence theorems stated in
Section~\ref{s:results}. First we prove
Theorem~\ref{t:scalinglimitclassification}, which gives a complete
characterization of the set of processes that appear as the scaling limit
of such RTRWs. We then provide the proofs of Theorems
\ref{t:BMconvergence}, \ref{t:FKconv}, \ref{t:RSBMconv} and~\ref{t:FINconv}. We recall that these theorems formulate criteria
implying the convergence of RTRWs to several limiting processes. Remark,
however, that our goal is not to characterize completely their domains of
attraction. Instead of this, we try to state natural criteria which can be
easily checked in applications.

%s7.1 #&#
\subsection{Set of limiting processes}%<<<2
\label{ss:classification}
This section contains the proof of Theorem~\ref{t:scalinglimitclassification}.
We need a simple lemma first.

%le7.1 #&#
\begin{lemma}
\label{l:converse}
Let $X$ be a RTRW with i.i.d. trapping landscape $\bolds\pi$ and
random trap measure $\mu$.
Assume that for some
nondecreasing function $\rho(\varepsilon)$ satisfying
$\lim_{\varepsilon\to0} \rho(\varepsilon)=0$, the processes
$X^\varepsilon_\cdot:= \varepsilon X_{\rho(\varepsilon)^{-1}\cdot}$
converge in distribution on $D(\mathbb R_{+}, J_1)$ to some process $U$
satisfying the nontriviality assumption
$\limsup_{t\to\infty} |U_t| = \infty$ a.s. Then the family of measures
$\mu^\varepsilon:=\rho(\varepsilon)\mathfrak S_{\varepsilon}(\mu
)$ is
relatively compact for the vague convergence in distribution.
\end{lemma}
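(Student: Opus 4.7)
The plan is to prove the lemma by contradiction, using the non-triviality of $U$ as the source of the contradiction. By Kallenberg's criterion for vague convergence in distribution of random measures (cf.~the appendix), relative compactness of $(\mu^\varepsilon)_{\varepsilon>0}$ is equivalent to tightness of $\mu^\varepsilon(K)$ in $\mathbb R_+$ for every compact $K\subset \mathbb H$. Suppose this fails for $K_R := [-R,R]\times[0,R]$ for some $R>0$: there exist $\delta>0$ and sequences $\varepsilon_n \to 0$, $M_n \to \infty$ with $\Pb(\mu^{\varepsilon_n}(K_R) > M_n) \ge \delta$ for every $n$.

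The next step is to convert largeness of $\mu^\varepsilon(K_R)$ into slowness of $X^\varepsilon$. Let $Z$ denote the simple random walk underlying $X$ (independent of $\mu$), set $N_\varepsilon := \lceil R/\varepsilon\rceil$, and introduce the $Z$-stopping time
\begin{equation}
T^\varepsilon := \inf\{n\ge 0 : L(x,n) \ge N_\varepsilon \text{ for every } x \in \mathbb Z\cap[-N_\varepsilon, N_\varepsilon]\},
\end{equation}
which is a.s.~finite by recurrence of $Z$. Since $L(x, T^\varepsilon) \ge N_\varepsilon \ge R/\varepsilon$ for the relevant sites, summing trap durations under the graph of $L(\cdot, T^\varepsilon)$ gives
\begin{equation}
\phi[\mu,Z]_{T^\varepsilon} \ge \mu([-R/\varepsilon, R/\varepsilon]\times[0, R/\varepsilon]) = \rho(\varepsilon)^{-1}\mu^\varepsilon(K_R).
\end{equation}
Hence, on $\{\mu^\varepsilon(K_R) > M\}$, for every $t \le M$ the inverse clock satisfies $\psi[\mu,Z]_{\rho(\varepsilon)^{-1}t} \le T^\varepsilon$, and therefore $|X^\varepsilon_t| = \varepsilon|Z_{\psi[\mu,Z]_{\rho(\varepsilon)^{-1}t}}| \le \varepsilon\max_{k \le T^\varepsilon}|Z_k|$.

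I would then invoke the joint invariance principle (Donsker together with the convergence of the random-walk local times to Brownian local times used in the proof of Theorem~\ref{t:delayedrwconv}): setting $\tau_R := \inf\{s : \ell(x,s)\ge R \text{ for every } x \in [-R,R]\}$, which is a.s.~finite, one obtains $\varepsilon\max_{k \le T^\varepsilon}|Z_k| \Rightarrow \max_{s \le \tau_R}|B_s|$ as $\varepsilon \to 0$. In particular, this family is tight, so one may pick $A>0$ with $\Pb(\varepsilon_n\max_{k\le T^{\varepsilon_n}}|Z_k|>A) < \delta/3$ uniformly in $n$. Combining with the previous display, the event $\{\sup_{t \le M_n}|X^{\varepsilon_n}_t| \le A\}$ has probability at least $2\delta/3$.

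To close the contradiction, using $\limsup_t|U_t|=\infty$ a.s.~pick a continuity point $T$ of the law of $\sup_{t\le\cdot}|U_t|$ with $\Pb(\sup_{t \le T}|U_t|>A) > 1-\delta/3$. Because the time-changes entering the $J_1$ metric on $D([0,T])$ are bijections of $[0,T]$, the functional $f \mapsto \sup_{t\le T}|f_t|$ is continuous on $D([0,T], J_1)$, and the continuous mapping theorem yields $\Pb(\sup_{t\le T}|X^{\varepsilon_n}_t| \le A) < \delta/3$ for all large $n$. But for such $n$ we also have $M_n > T$, whence
\begin{equation}
\Pb(\sup_{t\le T}|X^{\varepsilon_n}_t| \le A) \ge \Pb(\sup_{t\le M_n}|X^{\varepsilon_n}_t| \le A) \ge 2\delta/3,
\end{equation}
a contradiction. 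The main technical obstacle is the invariance-principle step above, namely transferring the joint convergence $(\varepsilon Z_{\lfloor\varepsilon^{-2}\cdot\rfloor}, \varepsilon L(\varepsilon^{-1}\cdot, \varepsilon^{-2}\cdot)) \Rightarrow (B, \ell)$ to the random stopping time $T^\varepsilon$ in order to deduce $\varepsilon\max_{k\le T^\varepsilon}|Z_k| \Rightarrow \max_{s\le \tau_R}|B_s|$; this is standard (Donsker combined with a Ray--Knight type result) but requires a modicum of care.
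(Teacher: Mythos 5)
Your proof is conceptually sound and reaches the conclusion by the same contradiction strategy as the paper, but with a genuinely different technical route. Both proofs begin from the observation (Kallenberg) that relative compactness of $(\mu^\varepsilon)$ reduces to tightness of $\mu^\varepsilon(K)$ for each compact $K$, and both convert failure of tightness on a box into the statement that the rescaled walk stays bounded on a long time horizon, contradicting $\limsup_t|U_t|=\infty$.

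Where you differ: you introduce the random stopping time $T^\varepsilon$ (the first time the walk's local time reaches $N_\varepsilon$ simultaneously on all of $[-N_\varepsilon,N_\varepsilon]$) and then need tightness of $\varepsilon\max_{k\le T^\varepsilon}|Z_k|$. As you flag, this is the one genuinely delicate step, since it amounts to pushing the invariance principle for $(\varepsilon Z_{\lfloor\varepsilon^{-2}\cdot\rfloor},\varepsilon L(\varepsilon^{-1}\cdot,\varepsilon^{-2}\cdot))$ through a stopping time defined by the local-time profile; a Ray--Knight argument does close it, but it is the most technical piece of the proof. The paper sidesteps the stopping time: it fixes a large \emph{deterministic} $t$ and uses that, uniformly over $\varepsilon$, the set $U_{\varepsilon L(\varepsilon^{-1}\cdot,\varepsilon^{-2}t)}$ eventually (in $t$) contains any given compact $A$ with probability close to $1$, while Donsker bounds $\sup_{s\le\varepsilon^{-2}t}|\varepsilon Z_s|$. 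The underlying local-time machinery is essentially the same in both cases; the paper's packaging avoids the stopping-time issue, at the cost of a somewhat more compressed justification of the uniform-in-$\varepsilon$ local-time lower bound. You also replace the paper's use of independence of $\mu$ and $Z$ (used there to multiply $\delta$ by $1/2$) with a plain union bound $\delta-\delta/3$, a minor simplification.

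One small point to make explicit: in the final step you should choose $A$ to be both large enough for tightness \emph{and} a continuity point of the law of $\sup_{t\le T}|U_t|$, so that the portmanteau theorem applies cleanly; since there are at most countably many atoms this can always be arranged by increasing $A$ slightly. You do already handle the analogous point about $T$ being a continuity point, which is what makes $f\mapsto\sup_{t\le T}|f_t|$ $\tilde\Pb$-a.s.\ continuous on $D([0,\infty),J_1)$ at the limit.
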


\begin{pf}
By \cite{Kal02}, Lemma~16.15, a sequence $\mu^\varepsilon$ of random
measures on $\mathbb H$ is relatively compact for the vague convergence
in distribution iff for every compact $A\subset\mathbb H$ the family of
random variables $(\mu^\varepsilon(A))_{\varepsilon>0}$ is tight in
the usual sense.

Assume now, by contradiction, that $(\mu^\varepsilon)$ is not
relatively compact. Then there exists $A\subset\mathbb H$ compact and
$\delta>0$ such that
%
%e7.1 #&#
\begin{equation}
\limsup_{\varepsilon\to0}\mathbb P\bigl[\mu^\varepsilon(A)>K\bigr]\ge
\delta \qquad\mbox{for all $K>0$}.
\end{equation}
Let $Z$ be a simple random walk on $\mathbb Z$ independent of $\mu$
such that $X=Z[\mu]$, and let $L(\cdot,\cdot)$ be its local time.
Since, uniformly in $\varepsilon\in(0,1)$,
$\varepsilon U_{L(\cdot,\varepsilon^{-2}t)}\xrightarrow{t\to\infty
}\mathbb H$,
it is possible to choose $t$ and $M$ large such that
%
%e7.2 #&#
\begin{equation}
\liminf_{\varepsilon\to0} \mathbb P \Bigl[ A\subset U_{\varepsilon L(\cdot, \varepsilon^{-2}t)},
\sup_{s\le\varepsilon^{-2}t}\bigl |\varepsilon Z(s)\bigr|\le M \Bigr] \ge1/2.
\end{equation}
Since the simple random walk $Z$ and $\mu$ are independent,
this implies, using the identity
$\rho(\varepsilon)\phi[\mu,Z]_{t\varepsilon^{-2}} = \mu
^\varepsilon(U_{\varepsilon L(\cdot, \varepsilon^{-2}t)})$,
%
%e7.3 #&#
\begin{equation}
\limsup_{\varepsilon\to0} \mathbb P \Bigl[\rho(\varepsilon)\phi[
\mu,Z]_{t\varepsilon^{-2}} \ge K, \sup_{s\le\varepsilon^{-2}t} \bigl|\varepsilon Z(s)\bigr|\le
M \Bigr] \ge\varepsilon/2,
\end{equation}
and thus
%
%e7.4 #&#
\begin{equation}
\limsup_{\varepsilon\to0} \mathbb P \Bigl[\psi[\mu,Z ]_{K\rho(\varepsilon)^{-1}}\le
t\varepsilon^{-2}, \sup_{s\le\varepsilon^{-2}t} \bigl|\varepsilon Z(s)\bigr|\le
M \Bigr] \ge\varepsilon/2.
\end{equation}
As
$X^\varepsilon = \varepsilon Z_{\psi[\mu,Z]_{t\rho(\varepsilon)^{-1}}}$,
and $K$ is arbitrary
%
%e7.5 #&#
\begin{equation}
\limsup_{\varepsilon\to0}\mathbb P \Bigl[\sup_{s<\infty
}\bigl|X^\varepsilon
(s)\bigr|\le M \Bigr]\ge \varepsilon/2,
\end{equation}
which contradicts the nontriviality assumption on the limit $U$.
\end{pf}

\begin{pf*}{Proof of Theorem~\ref{t:scalinglimitclassification}}
Let $\mu$ be the random trap measure of the RTRW $X$, and
$\mu^\varepsilon = \rho(\varepsilon)\mathfrak S_\varepsilon(\mu)$.
In view of Lemma~\ref{l:converse} and the assumptions of the theorem,
the family $(\mu^\varepsilon)$ is relatively compact. Therefore, there
is a sequence $\varepsilon_k$ tending to~$0$ as $k\to\infty$
such that $\mu^{\varepsilon_k}$ converges vaguely in distribution.

To show the theorem, we should thus first characterize all possible
limit points of random trap measures of RTRW's with i.i.d. trapping
landscape.

%le7.2 #&#
\begin{lemma}
\label{l:clas}
Assume that $\mu^\varepsilon$ converges as $\varepsilon\to0$
vaguely in distribution to a nontrivial random measure $\nu$. Then
one of the two following possibilities occurs:
\begin{longlist}[(ii)]
\item[(i)]
$\rho(\varepsilon)= \varepsilon^2 L(\varepsilon)$ for a
function $L$ slowly varying at $0$, and
$\nu= c \Leb_{\mathbb H}$, $c\in(0,\infty)$.

\item[(ii)] $\rho(\varepsilon) = \varepsilon^\alpha L(\varepsilon
)$ for
$\alpha>2$ and a function $L$ slowly varying at $0$, and $\nu$
can be written as
%
%e7.6 #&#
\begin{equation}
\nu = c_1 \mu_\FK^{2/\alpha} + \mu_\GFIN^{\mathbb F},
\end{equation}
where $c_1\in[0,\infty)$, $\mu_\FK^{2/\alpha}$ is the random
measure corresponding to the FK process defined in
Example~\ref{ex:FK}, and $\mu_\GFIN^{\mathbb F}$ is the random
measure of SSBM process given in Example~\ref{ex:GFIN},
$\mu_\FK^{2/\alpha}$ and $\mu_\GFIN^{\mathbb F}$ are mutually
independent. Moreover, the intensity measure $\mathbb F$ determining
the law of $\mu_\GFIN^{\mathbb F}$ satisfies the scaling relation~\eqref{e:Fscaling}.
\end{longlist}
In the both cases the limit measure $\nu$ is dense, infinite and
dispersed.
\end{lemma}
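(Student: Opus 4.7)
The plan is to extract structural properties of $\nu$ from the i.i.d.~nature of the trapping landscape and combine them with a self-similarity argument to pin down its form. Since the column measures $\mu(\{z\}\times\mathbb R_+)$, $z\in\mathbb Z$, are i.i.d.~under $\mathbb P$, the restrictions of $\mu^\varepsilon$ to disjoint spatial Borel sets are independent for every $\varepsilon>0$, and the family is approximately stationary under translations in $\varepsilon\mathbb Z$. Passing to the vague limit as $\varepsilon\to 0$ (via the Laplace functional criterion for random measures recalled in the appendix), $\nu$ therefore has truly independent spatial increments and is stationary under arbitrary spatial translations in the first coordinate.

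I would then apply a L\'evy--It\^o type representation for such random measures on $\mathbb H$. The measure $\nu$ decomposes uniquely as $\nu=\nu_\mathrm{det}+\nu_\mathrm{Poi}+\nu_\mathrm{col}$, where $\nu_\mathrm{det}$ is deterministic and translation invariant in $x$, $\nu_\mathrm{Poi}$ is built from a Poisson point process of isolated atoms in $\mathbb H\times(0,\infty)$, and $\nu_\mathrm{col}$ from a Poisson point process $(x_i,f_i)$ on $\mathbb R\times\mathfrak F^\ast$ attaching to each $x_i$ an independent subordinator Lebesgue--Stieltjes measure with Laplace exponent $f_i$. The identification relies on the fact that, within any column of $\mu^\varepsilon$ at horizontal location $z$, the scaled atom masses $\rho(\varepsilon)s_z^j$ at vertical positions $\varepsilon j$ are conditionally i.i.d.~given $\pi_z$, so any non-trivial vague limit of the column contribution is necessarily the Lebesgue--Stieltjes measure of a subordinator (for columns with diffuse L\'evy component, giving $\nu_\mathrm{col}=\mu_\GFIN^{\mathbb F}$), an isolated atom (when a single $s_z^j$ survives the scaling, contributing to $\nu_\mathrm{Poi}$, whose intensity is forced in the next step to be $\gamma$-stable and thus produce $c_1\mu_\FK^{2/\alpha}$), or bulk averaging to $c\,\Leb_{\mathbb H}$ via the law of large numbers for $\nu_\mathrm{det}$.

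To identify the scaling, I would compare $\mu^{a\varepsilon}$ with $\mu^\varepsilon$ via the identity $\mathfrak S_{a\varepsilon}(\mu)(A)=\mathfrak S_\varepsilon(\mu)(a^{-1}A)$, which passes to the limit to give
\begin{equation*}
  \nu(A)\laweq \lim_{\varepsilon\to 0}\frac{\rho(a\varepsilon)}{\rho(\varepsilon)}\,\nu(a^{-1}A),\qquad a>0.
\end{equation*}
Non-triviality of $\nu$ forces $\rho(a\varepsilon)/\rho(\varepsilon)\to c(a)\in(0,\infty)$, so standard regular variation yields $\rho(\varepsilon)=\varepsilon^\alpha L(\varepsilon)$ with $c(a)=a^\alpha$, and classical domain-of-attraction estimates on sums of i.i.d.~positive variables restrict $\alpha$ to $[2,\infty)$. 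The threshold $\alpha=2$ is precisely the law-of-large-numbers regime, which kills $\nu_\mathrm{Poi}$ and $\nu_\mathrm{col}$ and leaves $\nu=c\,\Leb_{\mathbb H}$, i.e.~case~(i). For $\alpha>2$ the bulk vanishes, so $\nu_\mathrm{det}=0$ and the scaling relation is inherited by $\nu_\mathrm{Poi}+\nu_\mathrm{col}$; pulling the spatial dilation through the two Poisson representations then yields the index $\gamma=2/\alpha$ for $\mu_\FK^{2/\alpha}$ and the scaling relation~\eqref{e:Fscaling} on $\mathbb F$. Density, infiniteness, and dispersedness of $\nu$ are immediate from stationarity plus non-triviality, together with Lemma~\ref{l:borde}.

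The main obstacle will be the L\'evy--It\^o decomposition itself: although the existence of such a representation for stationary random measures with independent spatial increments is classical, cleanly separating the three pieces (deterministic bulk, isolated-atom Poisson, column-Poisson) and matching them to the specific forms of $\Leb_{\mathbb H}$, $\mu_\FK^{2/\alpha}$ and $\mu_\GFIN^{\mathbb F}$ requires a careful disintegration of the Poisson intensity along the atoms-vs.-diffuse dichotomy on $\mathfrak F^\ast$ and verification that the resulting subordinator laws satisfy the integrability condition~\eqref{e:assumptiononF}.
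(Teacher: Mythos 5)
Your overall plan is close in spirit to the paper's: establish structural symmetries of $\nu$ from the i.i.d.~landscape, obtain a representation theorem decomposing $\nu$ into bulk, Poisson-atom, and Poisson-column pieces, then use scaling invariance and local finiteness to pin down the exponents and identify the components with $\Leb_{\mathbb H}$, $\mu_\FK^{2/\alpha}$, and $\mu_\GFIN^{\mathbb F}$. The scaling step at the start is correct and matches the paper's derivation of \eqref{e:rhoregvar}--\eqref{e:nuscaling}.

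The genuine gap is exactly the item you flagged as the ``main obstacle,'' and it is larger than you suggest. There is no classical L\'evy--It\^o representation for random measures on $\mathbb H$ that are merely stationary in $x$ with independent spatial increments; that hypothesis alone leaves the vertical structure of the column measures completely unconstrained, and cannot force them to be Lebesgue--Stieltjes measures of subordinators. The tool the paper actually uses is Kallenberg's representation theorem for \emph{separately exchangeable} random measures on $\mathbb H$ (\cite[Theorem~9.23]{Kal05}, recalled as Theorem~\ref{t:Kallenberg}), which requires proving exchangeability under measure preserving transformations of \emph{both} coordinates. The $y$-exchangeability is where the i.i.d.~structure of the visit durations $(s_z^i)_{i\ge 1}$ enters in a formalizable way; you invoke it only heuristically (``within any column... the scaled atom masses are conditionally i.i.d.''), and heuristics do not produce the full Kallenberg decomposition \eqref{e:Kaldecomp}, which also contains terms ($f$, $g'$, $h'$) you never address — the paper must explicitly rule these out using the independence of spatial increments, by showing that otherwise $\nu\langle A_1\rangle$ and $\nu\langle A_2\rangle$ would jump simultaneously with positive probability, contradicting the fact that for each fixed environment they are independent L\'evy processes. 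Without the representation theorem and the argument eliminating the extra terms, your decomposition $\nu=\nu_{\mathrm{det}}+\nu_{\mathrm{Poi}}+\nu_{\mathrm{col}}$ is an assumption, not a conclusion.

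A second, smaller gap: the constraint $\alpha\in\{2\}\cup(2,\infty)$ does not come from ``classical domain-of-attraction estimates.'' In the paper it is derived component by component from local finiteness of the limiting random measure via \cite[Theorem~9.25]{Kal05}: the FK component requires $\int_0^\infty(1\wedge l(\eta))\,d\eta<\infty$, forcing $l(\eta)=c'\eta^{-\alpha/2}$ with $\alpha>2$; the SSBM components similarly require $\alpha>2$ through the integrability condition \eqref{e:gfinite}, with the careful estimate \eqref{e:Ku} excluding $\alpha\in[1,2]$; and the Lebesgue component $\gamma\Leb_{\mathbb H}$ has scaling exponent exactly $2$, so if $\alpha>2$ it must vanish. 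Your proposal asserts the dichotomy but supplies no mechanism for it, and the domain-of-attraction heuristic by itself cannot exclude, say, a nonzero Lebesgue part coexisting with a nontrivial Poisson part at $\alpha=2$.
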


We first use this lemma to complete the proof of
Theorem~\ref{t:scalinglimitclassification}. By Lemmas~\ref
{l:converse} and
\ref{l:clas}, we can find a sequence $\varepsilon_k$ tending to $0$
as $k\to\infty$ such that $\mu^\varepsilon\to\nu$ vaguely in
distribution and $\nu$ is as in (i) or (ii) of Lemma~\ref{l:clas}, and
$\nu$ is dense, dispersed and infinite. Therefore, by
Theorem~\ref{t:delayedrwconv}, the family $X^\varepsilon$ of
processes converges in distribution on $D(\mathbb R_+, J_1)$ along the
subsequence $\varepsilon_k$ to a RTBM $X[\nu]$. As we assume that
the limit $\lim_{\varepsilon\to0}X^\varepsilon=U $ exists, we see
that $U=X[\mu]$.

The theorem then follows from the fact, that if (i) of
Lemma~\ref{l:clas} occurs, then $U=X[\nu]$ is a multiple of Brownian
motion, and thus (i) of the theorem occurs. On the other hand, if (ii) of
Lemma~\ref{l:clas} occurs, then $U=X[\nu]$ is a FK-SSBM mixture with
the claimed properties.
\end{pf*}

It remains to show Lemma~\ref{l:clas}.

\begin{pf*}{Proof of Lemma~\ref{l:clas}}
The proof that $\rho(\varepsilon)$ must be a regularly varying
function is standard: For $a >0$, $A\in\mathcal B(\mathbb H)$ bounded,
observe that
$\mathfrak S_{\varepsilon a }(\mu)(a A) = \mathfrak S_\varepsilon(\mu)(A)$.
Therefore,
%
%e7.7 #&#
\begin{eqnarray}
\label{e:scaling} \nu(A) &=&\lim_{\varepsilon\to0} \rho(\varepsilon)
\mathfrak S_\varepsilon(\mu) (A)
\nonumber
\\
&=&\lim_{\varepsilon\to0} \frac{\rho(\varepsilon)}{\rho(a\varepsilon)} \rho(a\varepsilon) \mathfrak
S_{a\varepsilon} (\mu) (aA)
\\
&=& \nu(a A) \lim_{\varepsilon\to0} \frac{\rho(\varepsilon)}{\rho(a\varepsilon)}.\nonumber
\end{eqnarray}
As both $\nu(A)$ and $\nu(aA)$ are nontrivial random variables, this
implies that the limit
$\lim_{\varepsilon\to0} \frac{\rho(\varepsilon)}{\rho
(a\varepsilon)}= c_k$
exists and is nontrivial. The theory of regularly varying functions
then yields
%
%e7.8 #&#
\begin{equation}
\label{e:rhoregvar} \rho(\varepsilon) = \varepsilon^{\alpha} L(\varepsilon)
\end{equation}
for $\alpha>0$ and a slowly varying function $L$. Inserting
\eqref{e:rhoregvar} into \eqref{e:scaling} also implies the scaling
invariance of $\nu$,
%
%e7.9 #&#
\begin{equation}
\label{e:nuscaling} a^\alpha \nu (A) \laweq\nu (a A), \qquad A\in\mathcal B(\mathbb
H), a>0.
\end{equation}

We now need to show that $\nu$ is as in (i) or (ii). To this end, we
use the theory of ``random measures with symmetries'' developed by
Kallenberg in \cite{Kal90,Kal05}. We recall from \cite{Kal05}, Chapter~9.1,
that random measure $\xi$ on $\mathbb H$ is said separately
exchangeable iff for any measure preserving transformations $f_1$ of
$\mathbb R$ and $f_2$ of $\mathbb R_+$
%
%e7.10 #&#
\begin{equation}
\xi\circ(f_1\otimes f_2)^{-1} \laweq\xi.
\end{equation}
Moreover, by \cite{Kal05}, Proposition~9.1, to check separate
exchangeability it is sufficient to restrict $f_1$, $f_2$ to
transpositions of dyadic intervals in $\mathbb R$ or $\mathbb R_+$,
respectively.

We claim that the limiting measure $\nu$ is separately exchangeable.
Indeed, restricting $\varepsilon$ to the sequence $\varepsilon_n= 2^{-n}$,
taking $I_1,I_2\subset\mathbb R$ and $J_1,J_2\subset\mathbb R_+$
disjoint dyadic intervals of the same length and defining $f_1$, $f_2$
to be transposition of $I_1, I_2$, respectively, $J_1,J_2$, it is easy
to see, using the i.i.d. property of the trapping landscape
$\bolds\pi$ and independence of $s^i_z$'s, that for all $n$
large enough.
%
%e7.11 #&#
\begin{equation}
\rho(\varepsilon_n)\mathfrak S_{\varepsilon_n }(\mu) \circ
(f_1\otimes f_2)^{-1} \laweq \rho(
\varepsilon_n)\mathfrak S_{\varepsilon_n }(\mu).
\end{equation}
Taking the limit $n\to\infty$ on both sides proves the separate
exchangeability of $\nu$.

The set of all separately exchangeable measures on $\mathbb H$ is
characterized by the following theorem which is a simple modification
of \cite{Kal05}, Theorem~9.23.

%th7.3 #&#
\begin{theorem}
\label{t:Kallenberg}
A random measure $\xi$ on $\mathbb H$ is separately exchangeable iff
almost surely
%
%e7.12 #&#
\begin{eqnarray}
\label{e:Kaldecomp} \xi &= &\gamma \Leb_{\mathbb H} + \sum
_k l(\alpha, \eta_k) \delta_{\rho_k,\rho'_k} +
\sum_{i,j} f\bigl(\alpha,\theta_i,
\theta'_j,\zeta_{ij}\bigr)
\delta_{\tau_i,\tau'_j }
\nonumber
\\
&&{}+\sum_{i,k} g(\alpha,\theta_i,
\chi_{ik}) \delta(\tau_i,\sigma_{ik}) +\sum
_i h(\alpha,\theta_i) (
\delta_{\tau_i}\otimes\Leb_+)
\\
&&{}+\sum_{j,k} g'\bigl(\alpha,
\theta'_j,\chi'_{jk}\bigr)
\delta\bigl(\sigma'_{jk},\tau'_j
\bigr) +\sum_j h'\bigl(\alpha,
\theta'_j\bigr) (\Leb\otimes\delta_{\tau'_j}),\nonumber
\end{eqnarray}
for some measurable functions $f\ge0$ on $\mathbb R_+^4$, $g,g'\ge0$
on $\mathbb R_+^3$, and $h,h',l\ge0$ on $\mathbb R_+^2$, an array of
i.i.d. uniform random variables $(\zeta_{i,j})_{i,j\in\mathbb N}$,
some independent unit rate Poisson processes $(\tau_j,\theta_j)_j$,
$(\sigma'_{ij}, \chi'_{ij})_j$, $i\in\mathbb N$, on $\mathbb H$,
$(\tau'_j, \theta'_j)_j$, $(\sigma_{ij},\chi_{ij})_j$,
$i\in\mathbb N$ on $\mathbb R_+^2$, and $(\rho_j,\rho'_j,\eta_j)_j$
on $\mathbb H\times\mathbb R_+$, and an independent pair of random
variables $\alpha,\gamma\ge0$.
\end{theorem}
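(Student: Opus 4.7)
The plan is to reduce the problem to the classification of separately exchangeable arrays, and then to pass from a discrete representation to the continuous one on $\mathbb H$. By Proposition~9.1 of \cite{Kal05} (which the excerpt already invokes), separate exchangeability of $\xi$ follows from invariance under transpositions of dyadic intervals of equal length in each coordinate. I therefore fix $n\in\mathbb N$, take the dyadic grid of mesh $2^{-n}$, and form the double array
\begin{equation*}
  \xi^n_{ij} := \xi(I^n_i\times J^n_j), \qquad i\in \mathbb Z,\ j\in \mathbb N,
\end{equation*}
where $I^n_i, J^n_j$ are the dyadic intervals of length $2^{-n}$. Separate exchangeability of $\xi$ implies that $(\xi^n_{ij})$ is a separately exchangeable double array, to which the classical Aldous--Hoover representation applies: conditionally on a random variable $\alpha^n$ coding the tail, there exist measurable $F^n$ and independent i.i.d.~uniform families $(\theta^n_i), (\theta'^n_j), (\zeta^n_{ij})$ such that
\begin{equation*}
  \xi^n_{ij} \laweq F^n(\alpha^n,\theta^n_i,\theta'^n_j,\zeta^n_{ij}).
\end{equation*}

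The next step is to take the projective limit along $n$ and organize the seven components of \eqref{e:Kaldecomp}. The joint distribution of the $\xi^n$ at increasing resolution is consistent in the natural way (refining a box sums to it), so the $\alpha^n$, as well as the triples $(\theta^n_i,\theta'^n_j,\zeta^n_{ij})$ associated to boxes that are refined into smaller ones, can be coupled across scales; standard arguments give a single $\alpha$ and a single set of i.i.d.~marks. The classification then proceeds by asking how the mass $\xi(B)$ of a small box $B = I_i\times J_j$ scales as its side length tends to zero. Mass that stays of order (area of $B$) comes from the absolutely continuous part $\gamma\,\Leb_{\mathbb H}$; mass that concentrates on a single column $I_i$ but is distributed in $J_j$ proportionally to $|J_j|$ produces the term $h(\alpha,\theta_i)\,\delta_{\tau_i}\otimes \Leb_+$, and symmetrically one gets $h'(\alpha,\theta'_j)\,\Leb\otimes\delta_{\tau'_j}$; mass localized in a single column at countably many $\mathbb R_+$-positions gives the $g$-term (and similarly $g'$); mass localized at the crossings of a ``row'' and a ``column'' gives the $f$-term; finally, mass that is not visible in the row/column marginals yet consists of point masses in the interior produces the fully Poissonian term with marks $(\rho_k,\rho'_k,\eta_k)$. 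Identifying each of these as a Cox process directed by the marked point configurations of $(\tau_i,\theta_i)$, $(\tau'_j,\theta'_j)$, and $(\rho_k,\rho'_k,\eta_k)$ (unit rate by rescaling $\alpha$ into the kernels $f,g,g',h,h',l$) yields \eqref{e:Kaldecomp}. The reverse direction, that every measure of the form \eqref{e:Kaldecomp} is separately exchangeable, is immediate from invariance of Lebesgue measure and the mapping theorem for Poisson processes.

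The main obstacle is the passage from discrete Aldous--Hoover to the continuous decomposition: one must recognize \emph{which} marked Poisson process on $\mathbb H\times \mathbb R_+$ parameterizes a given atomic or singular component, and one must ensure that the i.i.d.~marks $\theta_i,\theta'_j$ that arise in the discrete representation at scale $2^{-n}$ are actually consistent across scales so that they can be assembled into the marks of a single Poisson process in the limit. This is a question of proving the right martingale/consistency statement: after refinement, the random variable attached to a column has to be unchanged, which forces $\theta_i$ to be a function of the local structure that survives the scale limit, and it forces the dependence of $F^n$ on $\zeta^n_{ij}$ to be either diffuse (giving $\Leb_{\mathbb H}$ or the $h,h'$ parts) or purely atomic (feeding the $f,g,g'$ Poisson processes). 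Once this coupling is set up, computing the Laplace functional of $\xi$ and comparing it to that of the right-hand side of \eqref{e:Kaldecomp} closes the proof.
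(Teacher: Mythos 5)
The paper does not prove this theorem at all: it is quoted verbatim from Kallenberg's monograph as Theorem~9.23 of \cite{Kal05}, and the only commentary supplied is the parenthetical remark that Kallenberg states it on the quadrant $\mathbb R_+^2$ while the statement and proof adapt easily to $\mathbb H=\mathbb R\times\mathbb R_+$. So there is no ``paper's proof'' to compare your argument against; you are attempting to reprove a black-boxed external result.

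As to your sketch itself: the strategy of discretizing on dyadic grids, applying the Aldous--Hoover representation to the resulting double arrays, and passing to a projective limit is the right general approach and is in the spirit of Kallenberg's treatment in Chapter~9 of \cite{Kal05}. However, as written the argument has genuine gaps at precisely the steps you flag as ``the main obstacle,'' and those steps carry essentially all of the mathematical content. First, the Aldous--Hoover representation $\xi^n_{ij}=F^n(\alpha^n,\theta^n_i,\theta'^n_j,\zeta^n_{ij})$ at each scale $n$ is highly non-unique, and the assertion that the codes $(\alpha^n,\theta^n_i,\theta'^n_j,\zeta^n_{ij})$ ``can be coupled across scales'' by ``standard arguments'' is doing a great deal of unexplained work; this requires a concrete martingale or reverse-martingale argument (or the explicit coupling machinery Kallenberg builds) to produce a single $\alpha$ and a single Poisson configuration of marks in the limit, and no such argument is given. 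Second, the classification into the seven components is carried out purely heuristically (``mass that stays of order the area,'' ``mass localized on a column,'' etc.); showing that every separately exchangeable $\xi$ decomposes into exactly these pieces, with no additional singular contributions, and identifying the intensity of each driving Poisson process, is the actual theorem and is not established by the description. Third, the final step of ``computing the Laplace functional of $\xi$ and comparing'' is stated as the closing move but not performed, and with the coupling and classification unresolved it cannot be. The converse direction you state is indeed immediate. In short, your outline points in the right direction, but to count as a proof it would need to execute the scale-coupling and the rigorous decomposition, which together constitute Kallenberg's entire argument.
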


\begin{pf}
Theorem~9.23 of \cite{Kal05} gives analogous characterization for the
separately exchangeable random measures on the quadrant
$\mathbb R_+\times\mathbb R_+$. To transfer this result to
$\mathbb H$, fix an arbitrary measure preserving bijection\break
$f\dvtx \mathbb R_+\to\mathbb R$, and note that the random measure
$\mu$ on the quadrant $\mathbb R_+\times\mathbb R_+$ is separately
exchangeable iff the image measure $\mu\circ(f\otimes\Id)^{-1}$ is
separately exchangeable on $\mathbb H$. The claim of the theorem then
follows by observing that the image of a unit rate Poisson process on
$\mathbb R_+$ under $f$ is a unit rate Poisson process on $\mathbb R$.
\end{pf}

Ignoring for the moment the issue of convergence of the sum in
\eqref{e:Kaldecomp}, let
us describe in words various terms appearing there to make a link
to our result. For this discussion, we ignore the random variable
$\alpha$
and omit it from the notation (later we will justify this step).

The term $\sum_k l(\eta_k) \delta_{\rho_k,\rho'_k}$ has the same law
as the random measure\break $\sum_k z_k \delta_{x_k,y_k}$ for a
Poisson point
process $(x_k,y_k,z_k)$ on $\mathbb H\times\mathbb R_+$ with intensity
$dx \,dy\,  \Pi_l(dz)$ where the measure $\Pi_l$ is given by
%
%e7.13 #&#
\begin{equation}
\label{e:Pil} \Pi_l(A) = \Leb_+\bigl(l^{-1}(A)\bigr), \qquad A
\in\mathcal B(\mathbb R_+).
\end{equation}
Recalling Example~\ref{ex:FK}, this term resembles to the random measure
driving the FK process, the $z$-component of the intensity measure being
more general here.

Similarly, the terms
$\sum_{i,k} g(\theta_i,\xi_{ik}) \delta(\tau_i,\sigma_{ik})
+\sum_i h(\theta_i)(\delta_{\tau_i}\otimes\Leb_+)$
can be interpreted as the random measure $\mu_\GFIN^{\mathbb F}$
defined in
Example~\ref{ex:GFIN}: $\tau_i$'s correspond to $x_i$'s, and
$f_i = f_{h(\theta_i), \Pi_{g(\theta_i,\cdot)}}$ [recall
\eqref{e:fdPi}, \eqref{e:Pil} for the notation]. The intensity
measure $\mathbb F$ used in the definition of SSBM is thus determined
by functions $h$ and~$g$.

The terms with $g'$, $h'$ can be interpreted analogously, with the role
of $x$- and $y$-axis interchanged. Term $\gamma\Leb_{\mathbb H}$ will
correspond to the Brownian motion component of $\nu$ (recall
Example~\ref{ex:speedmeasure}). Finally, the term containing $f$ can
be viewed as a family of atoms placed on the grid
$(\tau_i)_i\times(\tau'_j)_j$; we will not need it later.

We now explain why the limiting measure $\nu$ appearing in
Theorem~\ref{t:scalinglimitclassification} is less general than
\eqref{e:Kaldecomp}. The first reason comes from the fact that the
trapping landscape is i.i.d. This implies that $\nu$ is not only
exchangeable in the $x$-direction, but also that for every disjoint
sets $A_1$,
$A_2\subset\mathbb R$ the processes $\nu\langle A_1\rangle$,
$\nu\langle A_2\rangle$ are independent. As the consequence of this
property, we see that $\alpha$ and $\gamma$ must be a.s. constant (or
$f,h,h',g,g',l$ independent of $\alpha$). We can thus omit $\alpha$
from the notation.

Further, this independence implies that $h'=g'=f\equiv0$. Indeed,
assume that it is not the case. Then it is easy to see that, for
$A_1, A_2$ disjoint, the processes $\nu\langle A_1\rangle$,
$\nu\langle A_2\rangle$ have a nonzero probability to have a jump at
the same time. On the other hand, for every $\omega$ fixed,
$\nu\langle A_1\rangle(\omega)$ and $\nu\langle A_2\rangle(\omega)$
are independent L\'evy processes (they are limits of i.i.d. sums) and,
therefore,
for every $\omega$, $\tilde\Pb$-a.s., they do not jump at the same
time, contradicting the assumption.

The previous reasoning implies that
$\nu = \nu_1 + \nu_2 + \nu_3 + \nu_4$ where $\nu_1, \ldots, \nu
_4$ are
the Brownian, FK, FIN and ``pure SSBM'' component, respectively [by pure
SSBM we understand SSBM with $\mathbb F$ supported on Laplace exponents
with $\mathtt d=0$, see \eqref{e:fdPi}, cf. also Definition~\ref{d:FIN}]
%
%e7.14 #&#
\begin{eqnarray}
\nu_1 &=& \gamma \Leb_{\mathbb H}, \qquad\nu_3 = \sum
_i h(\theta_i) (\delta_{\tau_i}
\otimes\Leb_+),
\nonumber
\\[-8pt]
\\[-8pt]
\nonumber
\nu_2 &=& \sum_k l(\eta_k)
\delta_{\rho_k,\rho'_k},\qquad\nu_4 = \sum_{i,k}
g(\theta_i,\xi_{ik}) \delta(\tau_i,
\sigma_{ik}).
\end{eqnarray}

Observe that the functions $l$, $g$ and $h$ are not determined uniquely
by the law of~$\nu$. In particular, for any measure preserving
transformation $f$ of $\mathbb R_+$, $l$ and $l\circ f^{-1}$ give rise
to the
same law of $\nu$, and similarly for $h$ and $g(\theta,\cdot)$.
Hence, we
may assume that $l$, $h$ are nonincreasing, and $g$ is nonincreasing
in the second coordinate.

The final restriction on $\nu$ comes from its scaling invariance
\eqref{e:nuscaling} and the local finiteness. To complete the proof, we
should thus explore scaling properties of various components of $\nu$.

The Brownian component $\nu_1$ is trivial. It is scale-invariant with
$\alpha=2$.
To find the conditions under which the FK component $\nu_2$ is
scale-invariant, we set
$A=[0,x]\times[0,y]$ and compute the Laplace transform of $\nu_2 A$.
To this end, we use the formula
%
%e7.15 #&#
\begin{equation}
\mathbb E\bigl[e^{-\pi f}\bigr] = \exp \bigl\{-\lambda
\bigl(1-e^{-f}\bigr) \bigr\},
\end{equation}
which holds for any Poisson point process $\pi$ on a measurable space
$E$ with
intensity measure $\lambda\in M(E)$ and $f\dvtx E\to\mathbb R$ measurable.
Using this formula with $\pi= (\rho_i,\rho'_i, \eta_i)$ and
$f(\rho,\rho',\eta)= \1_{A}(\rho,\rho') \lambda l (\eta)$, we
obtain that
%
%e7.16 #&#
\begin{equation}
\label{e:PPPLaplace} \mathbb E\bigl[e^{-\lambda \nu_2 A}\bigr]=\exp \biggl\{-xy\int
_0^\infty \bigl(1-e^{-\lambda l(\eta)}\bigr)\,d\eta
\biggr\}.
\end{equation}
The scaling invariance \eqref{e:nuscaling} then yields
%
%e7.17 #&#
\begin{equation}
a^2 \int_0^\infty
\bigl(1-e^{-\lambda l(\eta)}\bigr)\,d\eta = \int_0^\infty
\bigl(1-e^{-\lambda a^\alpha l(\eta)}\bigr)\,d\eta\qquad \forall\lambda,a >0,
\end{equation}
implying (together with the fact that $l$ is nonincreasing) that
$l(\eta)=c' \eta^{-\alpha/2}$, for a $c'\ge0$, $\alpha>0$. By
\cite{Kal05}, Theorem~9.25, $\nu_2$ is locally finite iff
$\int_0^\infty(1\wedge l(\eta))\,d\eta<\infty$, yielding $\alpha>2$.
Finally, using the observation from the discussion around
\eqref{e:Pil}, we see that $\nu_2= c \mu_\FK^{2/\alpha}$.

The component $\nu_3$ can be treated analogously. Using formula
\eqref{e:PPPLaplace} with $\pi= (\tau_i,\theta_i)$ and
$f=\lambda y h(\theta) \1_{[0,x]}(\tau)$, we obtain
%
%e7.18 #&#
\begin{equation}
\mathbb E\bigl[e^{-\lambda\nu_3 A }\bigr] = \exp \biggl\{-x\int_0^\infty
\bigl(1-e^{-\lambda y h(v)}\bigr) \,dv \biggr\}.
\end{equation}
The scaling invariance and the fact that $h$ is nonincreasing then
yields $h(\theta) = c \theta^{1-\alpha}$, for $c\ge0$, $\alpha\ge1$.
Using \cite{Kal05}, Theorem~9.25, again, $\nu_3$ is locally finite iff
$\int_0^\infty(1\wedge h(\theta))\,d\theta<\infty$, implying
$\alpha>2$.

The component $\nu_4$ is slightly more difficult as we need to deal
with many Poisson point processes. Using formula \eqref{e:PPPLaplace}
for the processes $(\sigma_{ij})_j$ and $(\chi_{ij})_j$ we get
%
%e7.19 #&#
\begin{equation}\qquad
\mathbb E\bigl[e^{-\lambda\nu_4 A}|(\theta_i),(\tau_i)
\bigr] = \exp \biggl\{- \sum_{i}
\1_{[0,x]}(x_i) y \int_0^\infty
\bigl(1-e^{-\lambda g(\theta_i,\chi) }\bigr)\,d\chi \biggr\}.
\end{equation}
Applying \eqref{e:PPPLaplace} again, this time for processes $(\tau_i)$,
$(\theta_i)$, then yields
%
%e7.20 #&#
\begin{equation}
\mathbb E\bigl[e^{-\lambda\nu_4 A}\bigr] = \exp \biggl\{-x \int
_0^\infty \bigl(1- e^{
- y
\int_0^\infty(1-e^{-\lambda g(\theta,\chi) })\,d\chi} \bigr)\,d\theta
\biggr\}.
\end{equation}
Hence, by scaling invariance and trivial substitutions, $g$ should
satisfy
%
%e7.21 #&#
\begin{eqnarray}
\label{e:scalingnufour} &&\int_0^\infty \bigl(1-
e^{ - y \int_0^\infty
(1-e^{-\lambda g(\theta,\chi) })
\,d\chi} \bigr) \,d\theta
\nonumber
\\[-8pt]
\\[-8pt]
\nonumber
&&\qquad= \int_0^\infty \bigl(1- e^{ - y \int_0^\infty
(1-e^{-\lambda a^{-\alpha} g(\theta/a,\chi/a) })
\,d\chi} \bigr)
\,d\theta
\end{eqnarray}
for every $a,y,\lambda>0$.

By \cite{Kal05}, Theorem~9.25, once more, $\nu_4$ is locally finite iff
%
%e7.22 #&#
\begin{equation}
\label{e:gfinite} \int \biggl\{1\wedge\int\bigl(1\wedge g(\theta,\chi)\bigr)\,d \chi
\biggr\} \,d\theta<\infty.
\end{equation}
We use this condition to show that for $\nu_4$ the scaling exponent
must satisfy $\alpha>2$. As $\alpha> 1$ is obvious, we should only
exclude $\alpha\in(1,2]$. By \eqref{e:scalingnufour} and the fact that
Laplace transform determines measures on $\mathbb R_+$,
%
%e7.23 #&#
\begin{eqnarray}
&&\Leb_+ \biggl\{\theta\dvtx \int\bigl(1-e^{-g(\theta, \chi)}\bigr)\,d\chi\ge u \biggr
\}
\nonumber
\\[-8pt]
\\[-8pt]
\nonumber
&&\qquad= \Leb_+ \biggl\{\theta\dvtx \int\bigl(1-e^{-a^{-\alpha}
g(\theta/a, \chi/a )}\bigr)\,d\chi\ge u \biggr
\}.
\end{eqnarray}
For some $c>1$, $c^{-1}(1\wedge x)\le1-e^{-x}\le1\wedge x$, therefore,
for $u\in(0,1)$
%
%e7.24 #&#
\begin{eqnarray}
\label{e:Ku} K(u) &:=& \Leb_+\biggl\{\theta\dvtx \int\bigl(1\wedge g(\theta,
\chi)\bigr)\,d \chi\ge u\biggr\}
\nonumber
\\
&\ge& \Leb_+\biggl\{\theta\dvtx \int\bigl(1-e^{-g(\theta, \chi)}\bigr)\,d\chi\ge u\biggr
\}
\nonumber
\\
&=&\Leb_+\biggl\{\theta\dvtx \int\bigl(1-e^{-a^{-\alpha}
g(\theta/a, \chi/a )}\bigr)\,d\chi\ge u\biggr\}
\nonumber
\\[-8pt]
\\[-8pt]
\nonumber
&\ge& a \Leb_+\biggl\{\theta\dvtx \int\bigl(a^\alpha\wedge g(\theta,\chi)
\bigr)\,d \chi\ge c a^{\alpha-1}u\biggr\}
\\
&\ge& u^{-1/(\alpha-1)} \Leb_+\biggl\{\theta\dvtx \int\bigl(1\wedge g(\theta,\chi)
\bigr)\,d \chi\ge c\biggr\}
\nonumber
\\
&=& u^{-1/(\alpha-1)} K(c),\nonumber
\end{eqnarray}
where for the last inequality we set $a\ge1$ so that $a^{\alpha-1 }u =1$.
Using \eqref{e:Ku}, it can be checked easily that the integral over
$\theta$ in \eqref{e:gfinite} is not finite when $\alpha\in(1, 2]$,
implying $\alpha>2$.

To complete the proof of Theorem~\ref{t:scalinglimitclassification}, it
remains to show the scaling relation~\eqref{e:Fscaling}. This is easy
to be done using the correspondence of $\nu_3+\nu_4$ and
$\mu_\GFIN^{\mathbb F}$.
Indeed, let $\mu_\GFIN^{\mathbb F}$, $\mu_{(x_i,f_i)}$ be as in
Example~\ref{ex:GFIN}. By scaling considerations,
%
%e7.25 #&#
\begin{equation}
a^{-\alpha}\mathfrak S_{a^{-1}}\mu_{(x_i,f_i)} \laweq
\mu_{(x_i/a,\sigma_a^\alpha f_i)},
\end{equation}
from which \eqref{e:Fscaling} follows immediately.

The fact that $\nu$ is dispersed follows from Lemma~\ref{l:borde}, as
in the both cases, (i) and~(ii), $\nu$ is a trap measure of RTBM.
Density of $\nu$ can be easily deduced from its scaling invariance and
infiniteness of $\nu$ is obvious.
\end{pf*}

%s7.2 #&#
\subsection{Convergence to the Brownian motion}%<<<2
\label{ss:BMconvergence}

Here, we present the proof of the convergence to Brownian motion stated in
Theorem~\ref{t:BMconvergence}. For reading the proof, it is useful to
recall the notation introduced when defining RTRW in Section~\ref{ss:rtrw}.

\begin{pf*}{Proof of Theorem~\ref{t:BMconvergence}}
Let $\mu$ be the random trap measure of the RTRW $X$ under consideration.
We recall that $s_z^i$ stands for the duration of the $i$th visit of
$X=Z[\mu]$ to $z\in\Z$.

We use the multidimensional individual ergodic theorem, which we
recall for the sake of completeness in the \hyperref[app]{Appendix},
Theorem~\ref{t:ergodic}.
We apply it for $X=\R_+^{\Z\times\Z}$, $Q$ the distribution of
$(s_z^i)_{z,i\in\mathbb Z}$ under $\Pb\otimes\tilde\Pb$, and
$\mathcal G$ the cylinder field (here we extend $s_z^i$ to negative $i$'s
in the natural way). We define
$(\theta_{i,j})_{i,j\in\Z}\dvtx \R_+^{\Z\times\Z}\to\R_+^{\Z\times
\Z}$ via
$\theta_{x,j}((s_z^i)_{z,i\in\Z})=(s_{x+z}^{i+j})_{z,i\in\Z}$. It is
clear from the construction that $Q$ is stationary under $\theta_{x,j}$.
As the trapping landscape and $(s_z^i)_{i}$, $z\in\mathbb Z$, are
i.i.d., $Q$ is ergodic with respect to every $\theta_{x,j}$ with
$x\neq0$. Hence, the invariant field is trivial. The multidimensional
ergodic theorem then implies that for any two intervals
$I,J\subset\mathbb R$
%
%e7.26 #&#
\begin{equation}
\qquad \frac{1}{n^2}\sum_{z:{z}/n\in I}\sum
_{i:{i}/n\in J} s_z^i\xrightarrow{n\to\infty}
|I||J|(\mathbb E\otimes\tilde{\mathbb E})\bigl[s^i_z
\bigr] = |I||J|M, \qquad Q\mbox{-a.s.}
\end{equation}
Therefore,
$\varepsilon^2 \mathfrak{S}_\varepsilon(\mu)(I\times J)\to
|I||J|M$, and
thus $\varepsilon^2 \mathfrak S_\varepsilon(\mu)$ converges to
$M\times\Leb_{\mathbb H}$,
$\Pb\times\tilde\Pb$-a.s. This together with
Theorem~\ref{t:delayedrwconv} completes the proof.
\end{pf*}

%s7.3 #&#
\subsection{Convergence to the FK process} %<<<2
\label{ss:FKconvergence}
Here, we present the proof of Theorem~\ref{t:FKconv}. As usual, $\mu$
will stand for the random trap measure of the RTRW under consideration.

\begin{pf*}{Proof of Theorem~\ref{t:FKconv}}
To show the convergence in $P^{\bolds\pi}$-distribution, in
$\Pb$-probability, we will show the equivalent statement; see
\cite{Kal02}, Lemma~4.2.
%
%e7.27 #&#
\begin{equation}
\label{e:equivstat}
\quad\begin{tabular}{p{300pt}@{}}
For every sequence $\varepsilon_n$
there exists a subsequence $\varepsilon_{n_k}$ such that as $k\to
\infty$, $(\varepsilon X_{q_{\FK}(\varepsilon_{n_k})^{-1}t})_{t\ge0}$ converges to the FK
process with parameter $\gamma=2/\alpha$, in $P^{\pi}$-distribution, $
\Pb$-a.s.
\end{tabular}
\end{equation}

We thus fix a sequence $\varepsilon_n\to0$
and check \eqref{e:equivstat} for a subsequence
$\varepsilon_{n_k}=:\tilde\varepsilon_k$ satisfying
%
%e7.28 #&#
\begin{equation}
\label{e:FKsummability} \sum_{k=1}^\infty \tilde
\varepsilon_k^{-3} \mathbb E \bigl[ \bigl(1-\hat{\pi}
\bigl(q_{\FK}(\tilde\varepsilon_k)\bigr)
\bigr)^{2} \bigr]< \infty.
\end{equation}
By
Theorem~\ref{t:delayedrwconv}, it is sufficient to show that
$\mu_{\tilde\varepsilon_k}:= q_\FK(\tilde\varepsilon_k
)\mathfrak S_{\tilde\varepsilon_k} (\mu)$
converges vaguely in distribution to $\mu_\FK^\gamma$, $\Pb$-a.s.,
where $\mu_\FK^\gamma$ is the driving measure of the FK process
introduced in Example~\ref{ex:FK}, and $\mu$ is the trap measure of the
RTRW~$X$. For every given $\omega\in\Omega$,
$\mu=\mu(\omega,\tilde\omega)$ is the trap measure of a TRW. We
also know that $\mu_\FK^\gamma$ is L\'evy and has independent
increments. Therefore, we can apply
Proposition~\ref{p:kalconv}, and only check that for every
rectangle $A=[x_1,x_2]\times[y_1,y_2]$ with rational coordinates, $\Pb$-a.s.,
$\mu_\epsk(A)\xrightarrow{k\to\infty}\mu_\FK^\gamma(A)$ (it is
easy to
see that such rectangles form a DC semiring and are in
$\mathcal T_{\mu^\gamma_\FK}$). $\mu_\FK^\gamma(A)$ has a $\gamma$-stable
distribution with scaling parameter proportional to
$\Leb_{\mathbb H}(A)$, and thus its Laplace exponent is
$(x_2-x_1)(y_2-y_1)\lambda^\gamma$. The Laplace transform of
$\mu_\varepsilon(A)$ given $\omega$ [and thus given the trapping
landscape $(\pi_z)_{z\in\mathbb Z}$] is easy to compute. By the
independence of $s_z^i$'s,
%
%e7.29 #&#
\begin{equation}
\tilde{\mathbb E} \bigl[e^{-\lambda\mu_\varepsilon(A)}\bigr] = \prod
_{z=x_1\varepsilon^{-1}}^{x_2\varepsilon^{-1}} \hat\pi_z\bigl(\lambda
q_\FK(\varepsilon)\bigr)^{\varepsilon^{-1}(y_2-y_1)}.
\end{equation}
Hence, taking the $-\log$ to obtain the Laplace exponent, we shall show
that $\Pb$-a.s., for every $x_1<x_2$, $y_1,y_2\in\mathbb Q$,
$0\le\lambda\in\mathbb Q$,
%
%e7.30 #&#
\begin{equation}\qquad
\label{e:FKLLN} \tilde\varepsilon_k^{-1}
(y_2-y_1) \sum_{z=x_1\tilde\varepsilon_k^{-1}}^{x_2\tilde\varepsilon_k^{-1}}
\bigl(-\log\hat\pi_z \bigl(\lambda q_\FK(\tilde
\varepsilon_k)\bigr) \bigr) \xrightarrow{k\to\infty}
(y_2-y_1) (x_2-x_1)
\lambda^\gamma.
\end{equation}
As $\mathbb Q$ is countable, it is sufficient to show this for fixed $x$'s,
$y$'s and $\lambda$. This will follow by a standard
law-of-large-numbers argument as $\pi_z$'s are i.i.d. under $\mathbb P$.
To simplify the notation, we set $x_1=0$, $x_2=1$; $y$'s can be omitted
trivially.

We first consider $\lambda\le1$ and truncate. Using the monotonicity of
$\hat\pi$, $\lambda\le1$, and the Chebyshev inequality
%
%e7.31 #&#
\begin{equation}
\label{e:FKsupremum} \mathbb P\Bigl[\sup_{0\le z \le\epsk^{-1}} \bigl(1- \hat
\pi_z \bigl(q_\FK(\lambda\epsk)\bigr)\bigr)\ge\epsk\Bigr]
\le \epsk^{-3} \mathbb E\bigl[\bigl(1- \hat\pi\bigl(q_\FK(
\epsk)\bigr)\bigr)^2\bigr].
\end{equation}
Equation~\eqref{e:FKsummability} then implies that the above supremum is smaller
than $\epsk$ for all $k$ large enough, $\Pb$-a.s. Hence, for all $k$
large,
%
%e7.32 #&#
\begin{eqnarray}
\label{e:FKLLNtrunc} &&\tilde\varepsilon_k^{-1} \sum
_{z=0}^{\epsk^{-1}} \bigl(-\log\hat\pi_z \bigl(
\lambda q_\FK(\tilde\varepsilon_k)\bigr) \bigr)
\nonumber
\\[-8pt]
\\[-8pt]
\nonumber
&&\qquad=\epsk^{-1} \sum_{z=0}^{\epsk^{-1}}
\bigl(-\log \bigl( (1-\epsk)\vee\hat\pi_z \bigl(\lambda
q_\FK(\tilde \varepsilon_k)\bigr) \bigr) \bigr).
\end{eqnarray}

For any $\delta>0$, there is $\varepsilon$ small so that
%
%e7.33 #&#
\begin{equation}\quad
\label{eq:boundsforlog} (1-x)\le-\log x \le(1-x)+\bigl(\tfrac{1}2 +\delta\bigr)
(1-x)^2,\qquad x\in(1-\varepsilon,1].
\end{equation}
The expectation of the right-hand side of \eqref{e:FKLLNtrunc} is
bounded from above by
%
%e7.34 #&#
\begin{eqnarray}\qquad
\label{e:FKexp}&& \epsk^{-2}\mathbb E\bigl[\epsk\wedge\bigl(1-\hat
\pi_z \bigl(\lambda q_\FK (\tilde \varepsilon_k)
\bigr)\bigr)\bigr] + c \epsk^{-2}\mathbb E \bigl[ \bigl(\epsk\wedge
\bigl(1-\hat\pi_z \bigl(\lambda q_\FK(\tilde
\varepsilon_k)\bigr)\bigr) \bigr)^2 \bigr]
\nonumber
\\[-8pt]
\\[-8pt]
\nonumber
&&\qquad\le\epsk^{-2}\mathbb E\bigl[1-\hat\pi_z \bigl(\lambda
q_\FK (\tilde \varepsilon_k)\bigr)\bigr] + o(1),
\end{eqnarray}
as $k\to\infty$, by \eqref{e:FKsecondmoment}.
On the other hand, using \eqref{eq:boundsforlog},
%
%e7.35 #&#
\begin{equation}
\E \Biggl(\tilde\varepsilon_k^{-1} \sum
_{z=0}^{\epsk^{-1}} \bigl(-\log\hat\pi_z \bigl(
\lambda q_\FK(\tilde\varepsilon_k)\bigr) \bigr) \Biggr)
\geq\epsk^{-2}\mathbb E\bigl[1-\hat\pi_z \bigl(\lambda
q_\FK(\tilde \varepsilon_k)\bigr)\bigr].
\end{equation}

%The lower bound for the
% expectation is then
% \begin{equation}
% \begin{split}
% &\epsk^{-2}\mathbb E[\epsk\wedge(1-\hat\pi_z (\lambda q_\FK(\tilde
% \varepsilon_k)))]
% \\&\ge
% \epsk^{-2}\mathbb E[1-\hat\pi_z (\lambda q_\FK(\tilde
% \varepsilon_k))] + \epsk^{-2} \mathbb P[\hat\pi_z (\lambda q_\FK(
%\tilde
% \varepsilon_k))\le1-\epsk].
% \end{split}
% \end{equation}
% The second term is again $o(1)$ as $k\to\infty$ by a similar estimate
%as
% in \eqref{e:FKsupremum}.
Moreover,
%
%e7.36 #&#
\begin{equation}
\epsk^{-2}\mathbb E\bigl[1-\hat\pi_z \bigl(\lambda
q_\FK(\tilde \varepsilon_k)\bigr)\bigr] =
\frac{\Gamma(\lambda q_{\FK}(\epsk))}{
\Gamma(q_{\FK}(\epsk))} \xrightarrow{k\to\infty} \lambda^{\gamma},
\end{equation}
by the fact that $\Gamma$ is regularly varying. Therefore, the
expectation of \eqref{e:FKLLNtrunc} converges to
$\lambda^\gamma$.

To compute the variance of the right-hand side of \eqref{e:FKLLNtrunc},
we observe that the second moment of one term is, for $k$ large,
bounded by
%
%e7.37 #&#
\begin{equation}\qquad
2 \mathbb E \bigl[ \bigl(\epsk\wedge\bigl(1-\hat\pi_z \bigl(\lambda
q_\FK (\tilde \varepsilon_k)\bigr)\bigr)
\bigr)^2 \bigr] \le 2 \mathbb E \bigl[ \bigl(1-\hat\pi_z
\bigl(q_\FK(\tilde \varepsilon_k)\bigr)
\bigr)^2 \bigr] = o\bigl(\epsk^3\bigr),
\end{equation}
as $k\to\infty$, by \eqref{e:FKsecondmoment}. Since the first moment
of one term is
$O(\epsk^2)$, by the previous computation, we see that
the variance of the right-hand side of \eqref{e:FKLLNtrunc} is bounded
by
%
%e7.38 #&#
\begin{equation}
C \epsk^{-3} \mathbb E \bigl[ \bigl(1-\hat\pi_z
\bigl(q_\FK(\tilde \varepsilon_k)\bigr)
\bigr)^2 \bigr],
\end{equation}
which is summable over $k$, by \eqref{e:FKsummability}. This implies
the strong law of large numbers for \eqref{e:FKLLNtrunc}, and thus
\eqref{e:FKLLN} for $\lambda\le1$. For $\lambda\ge1$,
\eqref{e:FKLLN} follows from the analyticity of Laplace transform. This
proves \eqref{e:FKLLN}, and thus the first claim of the theorem.

To prove the second claim of the theorem, it is sufficient to repeat
the previous argument with $\epsk= k^{-1+\delta/2}$. From the
assumption of the theorem then follows that
$\varepsilon^{-4-\delta} \mathbb E  [ (1-\hat{\pi}(q_{\FK
}(\varepsilon)) )^{2} ]=o(1)$,
and thus
%
%e7.39 #&#
\begin{equation}
\tilde\varepsilon_k^{-3} \mathbb E \bigl[ \bigl(1-\hat{
\pi}\bigl(q_{\FK}(\tilde\varepsilon_k)\bigr)
\bigr)^{2} \bigr] =o\bigl(\epsk^{1+\delta}\bigr) = o
\bigl(k^{(1+\delta) (1-\delta/2)} \bigr),
\end{equation}
and hence \eqref{e:FKsummability} holds. Therefore, $\mathbb P$-a.s. holds
along $\epsk$. To pass from the convergence along $\epsk$ to the
convergence as $\varepsilon\to0$, it is
sufficient to observe that, since
$\tilde\varepsilon_{k+1}^{-1} -\epsk^{-1}\xrightarrow{k\to\infty}
0$, for any
rectangle $A$ and $\varepsilon$ small enough there is $k$ such that
$\mathfrak S_\varepsilon(\mu)(A) = \mathfrak S_\epsk(\mu)(A)$.
\end{pf*}

%s7.4 #&#
\subsection{Convergence to the SSBM process}%<<<2
\label{ss:GFINconvergence}
Next, we prove Theorem~\ref{t:RSBMconv}. Again, $\mu$ stands for
the random trap measure of the RTRW $X$ under consideration.

\begin{pf*}{Proof of Theorem~\ref{t:RSBMconv}}
The proof is based on the following lemma.

%le7.4 #&#
\begin{lemma}
\label{l:GFINcoupling}
There exists a probability space
$(\bar\Omega,\bar{\mathcal F},\bar\Pb)$ and a family of trap measures
$(\bar{\mu}^\varepsilon_{\bar\omega})_{\varepsilon\ge0,\bar
\omega\in\bar\Omega}$
on another probability space
$(\tilde\Omega,\tilde{\mathbb F},\tilde\Pb)$ indexed by
$\bar\omega\in\bar\Omega$, such that, when
$\bar\mu^\varepsilon$, $\varepsilon\ge0$, denotes the mixture of
$\bar\mu^\varepsilon_{\bar\omega}$ w.r.t. $\bar\Pb$, the following
conditions hold:
\begin{longlist}[(a)]
\item[(a)] For every $\bar\omega\in\bar\Omega$ and $\varepsilon>0$,
$\bar\mu_{\bar\omega}^\varepsilon$ is a trap measure of a TRW,
and $\bar\mu_{\bar\omega}^0$ is a trap measure of a TBM.

\item[(b)] For every $\varepsilon>0$, $\bar{\mu}^{\varepsilon}$ is
distributed as $\mu$.

\item[(c)] $\bar\mu^0$ is distributed as $\mu_\GFIN^{\mathbb F}$.

\item[(d)]
$q(\varepsilon)\mathfrak{S}_{\varepsilon}(\bar{\mu}^\varepsilon
_{\bar\omega})$
converges vaguely in $\tilde\Pb$-distribution to $\bar{\mu}^0_{\bar
\omega}$
as $\varepsilon\to0$, for $\bar\Pb$-a.e. $\bar\omega$.
\end{longlist}
\end{lemma}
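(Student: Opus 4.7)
The plan is to construct the coupling in two stages: first match the random trapping landscapes to a limiting Poisson point process of (position, Laplace exponent) pairs via a Skorokhod representation on $(\bar\Omega,\bar{\mathcal F},\bar\Pb)$, then attach sojourn times and subordinators independently on an auxiliary space $(\tilde\Omega,\tilde{\mathcal F},\tilde\Pb)$. Concretely, for each $\varepsilon>0$ I consider the point process
\begin{equation*}
\xi^\varepsilon := \sum_{z \in \mathbb Z} \delta_{(\varepsilon z,\, \Psi_\varepsilon(\pi_z))}
\end{equation*}
on $\mathbb R \times \mathfrak F^\ast$, which is a null array of independent marked atoms on the grid $\varepsilon\mathbb Z$. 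I claim $\xi^\varepsilon$ converges vaguely in distribution to a Poisson point process $\xi^0 = \sum_i \delta_{(x_i,f_i)}$ with intensity $dx\otimes \mathbb F$ and $\mathbb F$ as in \eqref{e:GFINF}. By the classical Poisson-limit criterion for null arrays combined with Proposition \ref{p:kalconv}, this reduces to verifying $\varepsilon^{-1} P[\Psi_\varepsilon(\pi_0)\in C]\to \mathbb F(C)$ on a DC-semiring of sets $C\subset\mathfrak F^\ast$. Conditioning on the mean via the substitution $a = v d(\varepsilon)$, the tail assumption (HT) yields that the measure $\varepsilon^{-1} P[m(\pi_0)/d(\varepsilon)\in \cdot]$ converges vaguely on $(0,\infty)$ to $\gamma v^{-1-\gamma}\,dv$, while (L) combined with the scaling identity \eqref{e:Fv} gives $P[\Psi_\varepsilon(\pi^{vd(\varepsilon)}) \in C] \to \mathbb F_v(C)$; integrating in $v$ matches \eqref{e:GFINF}.

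A Skorokhod representation on the Polish space of locally finite measures on $\mathbb R\times\mathfrak F^\ast$ then furnishes $(\bar\Omega,\bar{\mathcal F},\bar\Pb)$ carrying $(\bar\xi^\varepsilon)_{\varepsilon\ge 0}$ with $\bar\xi^\varepsilon\laweq\xi^\varepsilon$, $\bar\xi^0\laweq\xi^0$, and $\bar\xi^\varepsilon\to\bar\xi^0$ vaguely $\bar\Pb$-a.s. From $\bar\xi^\varepsilon$ I read off the coupled landscape $\bar\pi^\varepsilon=(\bar\pi^\varepsilon_z)_{z\in\mathbb Z}$ (using the injectivity of $\pi\mapsto\Psi_\varepsilon(\pi)$), and from $\bar\xi^0$ the limit sequence $(\bar x_i,\bar f_i)_i$. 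On an independent probability space $(\tilde\Omega,\tilde{\mathcal F},\tilde\Pb)$ I place, for each fixed $\bar\omega$, independent sojourn times $\bar s^{\varepsilon,i}_z$ with law $\bar\pi^\varepsilon_z(\bar\omega)$ and independent subordinators $\bar S^i$ with Laplace exponents $\bar f_i(\bar\omega)$, and define $\bar\mu^\varepsilon_{\bar\omega}$ and $\bar\mu^0_{\bar\omega}$ to be the corresponding trap measures from Definition \ref{d:Trapped random walk} and Example \ref{ex:quenchedGFIN}. Properties (a)--(c) follow at once.

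Property (d) is the analytic heart. By Proposition \ref{p:Levymeasureconv}, it suffices to show convergence in $\tilde\Pb$-distribution of $q(\varepsilon)\mathfrak S_\varepsilon(\bar\mu^\varepsilon_{\bar\omega})(I\times[0,1])$ to $\bar\mu^0_{\bar\omega}(I\times[0,1])$ for compact $I$ with $\bar\mu^0_{\bar\omega}(\partial I\times\mathbb R_+)=0$. Since both conditional laws are infinitely divisible, taking $-\log$ of their Laplace transforms and applying the expansion $-\log(1-x)=x+O(x^2)$ as in the proof of Theorem \ref{t:FKconv} reduces the $\varepsilon$-side Laplace exponent to $\sum_{z:\varepsilon z\in I}\Psi_\varepsilon(\bar\pi^\varepsilon_z)(\lambda)+o(1)$, while the limit-side is $\sum_{i:\bar x_i\in I}\bar f_i(\lambda)$. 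Convergence of the former to the latter is obtained by a deep/shallow truncation on $\mathfrak F^\ast$: atoms in the complement of a neighborhood $C_{v_0}$ of the zero function are finitely many in $I$ and are handled by continuity of $f\mapsto f(\lambda)$ together with the Skorokhod coupling; shallow atoms are controlled uniformly in $\varepsilon$ via Jensen's inequality $\Psi_\varepsilon(\bar\pi^\varepsilon_z)(\lambda)\le \lambda d(\varepsilon)^{-1}m(\bar\pi^\varepsilon_z)$, yielding an expected shallow contribution $O(v_0^{1-\gamma})$ by (HT) and the regular variation of $d$; the corresponding limit-side shallow part is controlled by \eqref{e:assumptiononF}. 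The main obstacle is precisely this truncation on the non-metric space $\mathfrak F^\ast$: one must exhibit a decreasing family $C_{v_0}$ of neighborhoods of the zero function with $\mathbb F(\partial C_{v_0})=0$, verify the uniform shallow bound in $\varepsilon$, and interchange the limits $\varepsilon\to 0$ and $v_0\to 0$. Once this is in hand, the Skorokhod coupling closes the argument.
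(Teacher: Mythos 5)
Your high-level strategy (Skorokhod representation plus a deep/shallow truncation, then close with the mean bound $1-\hat\nu(\mu)\le \mu\,m(\nu)$ on the shallow part) is the same as the paper's, but you implement the coupling differently, and that difference creates a genuine gap. The paper builds the coupling in two stages: it first applies Skorokhod to the rescaled partial-sum process $d(\varepsilon)^{-1}V^\varepsilon_{\varepsilon^{-1}\cdot}$ of the means $m(\pi_z)$, obtaining a.s.\ convergence to a two-sided $\gamma$-stable subordinator $V$ on $(\Omega_1,\mathcal F_1,\Pb_1)$; only then does it attach, on a second space with a kernel $\Pb_2^\omega$, the conditional Laplace exponents $\psi^\varepsilon_z$ and limit marks $f_i$, again Skorokhod-coupled along the matched jumps of $V^\varepsilon$ and $V$. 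You instead do a single Skorokhod representation on the point process $\xi^\varepsilon=\sum_z\delta_{(\varepsilon z,\Psi_\varepsilon(\pi_z))}$ in $\mathbb R\times\mathfrak F^\ast$, converging vaguely (necessarily, on a region bounded away from the zero function $\boldsymbol 0$, since $\mathbb F$ is not locally finite at $\boldsymbol 0$) to the Poisson process of $(x_i,f_i)$.

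The problem is that vague convergence of $\xi^\varepsilon$ in the region away from $\boldsymbol 0$ controls only the finitely many deep atoms; it says nothing about the aggregate of the shallow ones. Part (d) asserts $\bar\Pb$-a.e.\ convergence of the $\tilde\Pb$-distributions, so the shallow-trap contribution to the Laplace exponent,
\begin{equation*}
\sum_{z:\ \varepsilon z\in I,\ z\text{ shallow}}\Psi_\varepsilon(\bar\pi^\varepsilon_z)(\lambda)\ \le\ \lambda\, d(\varepsilon)^{-1}\!\!\sum_{z:\ \varepsilon z\in I,\ z\text{ shallow}} m(\bar\pi^\varepsilon_z),
\end{equation*}
must be made small \emph{pathwise} in $\bar\omega$, uniformly in small $\varepsilon$. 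You bound its $\bar\Pb$-expectation (the $O(v_0^{1-\gamma})$ computation), which does not by itself give a $\bar\Pb$-a.s.\ bound, and your Skorokhod coupling of $\xi^\varepsilon$ offers no pathwise control of $d(\varepsilon)^{-1}\sum_{\text{shallow}}m(\bar\pi^\varepsilon_z)$. This is precisely what the paper's first-stage coupling supplies: from the a.s.\ $J_1$-convergence $d(\varepsilon)^{-1}V^\varepsilon_{\varepsilon^{-1}\cdot}\to V$ together with the jump-matching $d(\varepsilon)^{-1}m^\varepsilon_{z^\varepsilon_i}\to v_i$, the shallow aggregate over $I$ is a.s.\ eventually $\le 2\delta$, and the truncation threshold $\delta'$ is chosen so that $\sum_{i:x_i\in I}v_i\1\{v_i\le\delta'\}\le\delta$. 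You would either have to add a coupling of the mean-process $V^\varepsilon$ (effectively reproducing the paper's first stage), or supply a Borel--Cantelli/uniform-integrability argument promoting your expected bound to an a.s.\ one; as written, the argument does not establish (d). Secondarily, the truncation in your version is on neighborhoods $C_{v_0}$ of $\boldsymbol 0$ in $\mathfrak F^\ast$, which raises the $\mathbb F(\partial C_{v_0})=0$ issue you flag; the paper avoids this entirely by truncating on the scalar mean $v_i>\delta'$, where continuity sets are free.
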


We first complete the proof of Theorem~\ref{t:RSBMconv} using the
previous lemma. As, by~(a), $\bar\mu^0_{\bar\omega}$ is a L\'evy
trap measure for every $\bar\omega$, it is dispersed trap measure for
every $\bar\omega$, by Lemma~\ref{l:borde}. By Assumption (L), $\mu$
and $\mu_\GFIN^{\mathbb F}$ are $\mathbb P\otimes\tilde\Pb$-infinite.
Hence, due to (a)--(c) of the last lemma,
$(\bar{\mu}^\varepsilon_{\bar\omega})_{\varepsilon\ge0}$ are
infinite measures, $\bar\Pb$-a.s. From the scaling
relation \eqref{e:Fv}, one further deduces that $\mathbb F$ is not a
finite measure, so $\bar\mu^0$ is $\bar\Pb$-a.s. dense. Thus, we can
apply Theorem~\ref{t:delayedrwconv} and deduce from (d) the $\bar\Pb
$-a.s. convergence
in $\tilde\Pb$-distribution of
$(\varepsilon Z[\bar\mu_{\bar\omega}^\varepsilon]_{q(\varepsilon
)^{-1}t})_{t\geq0}$
to $(B[\bar\mu^0_{\bar\omega}]_t)_{t\geq0}$. By (b), (c) of the last
lemma, for every $\varepsilon>0$, $Z[\mu]$ is distributed as
$Z[\bar\mu^{\varepsilon}]$, and $B[\mu_\GFIN^{\mathbb F} ]$ is
distributed as $B[\bar\mu^0]$, this implies the claim of the theorem.
\end{pf*}

\begin{pf*}{Proof of Lemma~\ref{l:GFINcoupling}}
The proof of Lemma~\ref{l:GFINcoupling} is split to two parts. In the
first, we construct the coupling that satisfies (a)--(c) of the
lemma. In the second part, we prove that this coupling satisfies the
convergence claim (d).

\emph{Construction of the coupling.}
We consider a probability space $(\Omega_1,\mathcal F_1,
\mathbb P_1)$
on which we construct a Poisson point process
$(x_i,v_i)_{i\in\mathbb N}$ on $\mathbb R\times(0,\infty)$
with intensity $\gamma v^{-\gamma-1} \,dx \,dv$. For
$\omega\in\Omega_1$, we define
$\rho(\omega)=\sum_{i>0}v_i \delta_{x_i}$, and
$V(\omega)\in D(\mathbb R)$ by $V_0(\omega)=0$ and
$V_b(\omega)-V_a(\omega)= \rho((a,b]) (\omega)$, $a<b$, so that
$V$ is a two-sided $\gamma$-stable subordinator.

On the same probability space, we construct for every $\varepsilon>0$
a families of nonnegative random variables
$(m_z^\varepsilon)_{z\in\mathbb Z}$, such that
$(m_z^\varepsilon)_{z\in\mathbb Z}$
has the same distribution as $(m(\pi_z))_{z\in\mathbb Z}$. Similarly as
in \eqref{e:VV}, we define
$V^\varepsilon\in D(\mathbb R)$ by
%
%e7.40 #&#
\begin{equation}
V^\varepsilon_x= \cases{ \displaystyle\sum_{i=1}^{\lfloor x \rfloor}
m^\varepsilon_i, &\quad $x\ge 1$,\vspace*{2pt}
\cr
0,&\quad $x\in[0,1)$,
\vspace*{2pt}
\cr
\displaystyle\sum_{\lfloor x \rfloor+1}^0
m^\varepsilon_i, &\quad $x<0$.}
\end{equation}
By Assumption (HT), using Remark~\ref{rk:assumptionpp},
$d(\varepsilon)^{-1} V^\varepsilon_{\varepsilon^{-1}x}$ converges in
distribution on $(D(\mathbb R),J_1)$ to $V$. By the Skorokhod representation
theorem, we may choose $(m^\varepsilon_z)_{z,\varepsilon}$ so that this
convergence holds $\mathbb P_1$-a.s., and we do so.

For $\omega\in\Omega_1$ for which
$\varepsilon^{1/\gamma} V^\varepsilon_{\varepsilon^{-1}\cdot
}(\omega)\to V(\omega)$,
we fix an injective mapping
$I_\omega^\varepsilon(z)\dvtx \mathbb Z\to\mathbb N$ which satisfies
%
%e7.41 #&#
\begin{equation}
\label{e:Icond} \varepsilon z_i^\varepsilon\xrightarrow{
\varepsilon\to0} x_i, \qquad d(\varepsilon)^{-1}
m^\varepsilon_{z^\varepsilon_i} \xrightarrow{\varepsilon\to0} v_i\qquad
\mbox{for every $i\in\mathbb N$,}
\end{equation}
with $z_i^\varepsilon:= (I_\omega^\varepsilon)^{-1}(i)$,
$i\in\mathbb N$, $\varepsilon>0$. This is possible by the matching of
jumps property of the $J_1$-topology (see, e.g., \cite{Whi02}, Section~3.3). Remark that, as $I_\omega^\varepsilon$ is not
necessarily surjective, $z_i^\varepsilon$ is not defined for all $i$
and $\varepsilon$. On the other hand, \eqref{e:Icond} implicitly
requires that, for every $i\in\mathbb N$, $z_i^\varepsilon$ is
defined for all $\varepsilon$ small enough.

To proceed with the construction, we need a simple lemma.

%le7.5 #&#
\begin{lemma}
\label{l:onetrap}
Let $(v_\varepsilon)_{\varepsilon>0}$ be such that
$v_\varepsilon\to v$ as $\varepsilon\to0$.
Then
%
%e7.42 #&#
\begin{equation}
\Psi_\varepsilon\bigl(\pi^{d(\varepsilon)v_\varepsilon}\bigr) \xrightarrow{\varepsilon
\to0}\mathbb F_v.
\end{equation}
\end{lemma}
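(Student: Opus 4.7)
The plan is to reduce the statement to Assumption (L) by a change of scale. Define $\varepsilon'=\varepsilon'(\varepsilon)$ as the unique solution of $d(\varepsilon') = d(\varepsilon) v_\varepsilon$; this is well-defined for $\varepsilon$ small (and $v>0$), since $d$ is continuous and strictly decreasing by Remark~\ref{rk:assumptionpp}. As $\varepsilon\to 0$ we have $d(\varepsilon)v_\varepsilon\to\infty$, so $\varepsilon'\to 0$. Since $d$ is regularly varying at $0$ with index $-1/\gamma$, its inverse is regularly varying at $\infty$ with index $-\gamma$, which gives
\[
  \frac{\varepsilon'}{\varepsilon}
  = \frac{d^{-1}(v_\varepsilon d(\varepsilon))}{d^{-1}(d(\varepsilon))}
  \xrightarrow{\varepsilon \to 0} v^{-\gamma},
  \qquad
  \frac{q(\varepsilon)}{q(\varepsilon')}
  = \frac{\varepsilon}{\varepsilon'}\cdot v_\varepsilon
  \xrightarrow{\varepsilon \to 0} v^{\gamma+1}.
\]
(The case $v=0$ corresponds to a degenerate conditioning $\pi^0=\delta_0$, giving $\Psi_\varepsilon\equiv 0$, which is consistent with $\mathbb F_0=\delta_{\boldsymbol 0}$; it may be handled separately or excluded.)

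Next, I would use the algebraic identity that, since $\pi^{d(\varepsilon)v_\varepsilon}$ and $\pi^{d(\varepsilon')}$ have the same law, the definition of $\Psi_\varepsilon$ in \eqref{e:defPsi} yields
\[
  \Psi_\varepsilon\bigl(\pi^{d(\varepsilon)v_\varepsilon}\bigr)(\lambda)
  \;\laweq\;
  \frac{\varepsilon'}{\varepsilon}\,\Psi_{\varepsilon'}\bigl(\pi^{d(\varepsilon')}\bigr)
  \!\left(\lambda\,\frac{q(\varepsilon)}{q(\varepsilon')}\right).
\]
By Assumption (L), $\Psi_{\varepsilon'}(\pi^{d(\varepsilon')})\to\mathbb F_1$ in distribution on $\mathfrak F^\ast$. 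Using the Skorokhod representation theorem, we may realize this convergence almost surely. Every $f\in\mathfrak F^\ast$ is continuous, monotone and concave with $f(0)=0$; hence pointwise convergence automatically upgrades to locally uniform convergence (Dini), so $g_n(\lambda_n)\to g(\lambda)$ whenever $g_n\to g$ pointwise in $\mathfrak F^\ast$ and $\lambda_n\to\lambda$. Combined with the two scaling limits above, this shows that, pointwise in $\lambda$ and $\bar{\mathbb P}$-almost surely,
\[
  \Psi_\varepsilon\bigl(\pi^{d(\varepsilon)v_\varepsilon}\bigr)(\lambda)
  \;\longrightarrow\;
  v^{-\gamma}\, g(v^{\gamma+1}\lambda),
  \qquad g\sim\mathbb F_1.
\]

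Finally, I would identify the limiting law: by definition \eqref{e:sigma} of $\sigma_a^\alpha$ with $\alpha=1+1/\gamma$ and $a=v^{-\gamma}$,
\[
  v^{-\gamma}\, g(v^{\gamma+1}\lambda) = \sigma^{1+1/\gamma}_{v^{-\gamma}}(g)(\lambda),
\]
and $\sigma_{v^{-\gamma}}^{1+1/\gamma}$ is the inverse of $\sigma_{v^\gamma}^{1+1/\gamma}$. Therefore the law of the limit is the push-forward of $\mathbb F_1$ under $\sigma_{v^{-\gamma}}^{1+1/\gamma}$, which by \eqref{e:Fv} is exactly $\mathbb F_v$. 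This gives the claimed convergence $\Psi_\varepsilon(\pi^{d(\varepsilon)v_\varepsilon})\to\mathbb F_v$ in distribution on $\mathfrak F^\ast$. The main subtle point in this plan is the joint continuity step — namely turning pointwise convergence in $\mathfrak F^\ast$ together with varying arguments $\lambda q(\varepsilon)/q(\varepsilon')\to v^{\gamma+1}\lambda$ into genuine convergence of the composed expression; this is what makes the monotonicity/concavity of Laplace exponents essential.
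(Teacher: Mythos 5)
Your proof is correct and follows the same route as the paper: both reduce to Assumption~(L) via the change of scale $\varepsilon'=d^{-1}(d(\varepsilon)v_\varepsilon)$ (the paper calls it $t(\varepsilon)$) and then identify the limit law with $\mathbb F_v$ through the map $\sigma^{1+1/\gamma}_{v^{-\gamma}}$ and~\eqref{e:Fv}. Your single algebraic identity packages the paper's chain of nested $\sigma$-transformations a bit more compactly, and you are more explicit about the joint-continuity step (a.s.\ locally uniform convergence of Laplace exponents together with the convergent rescaled argument), which the paper uses implicitly.
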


\begin{pf}
Let $t(\varepsilon)$ be defined by
$d(t(\varepsilon))=d(\varepsilon)v_\varepsilon$ or equivalently
$t(\varepsilon):=d^{-1}(d(\varepsilon)v_\varepsilon)$
(recall that $d$ is strictly decreasing and continuous). Then,
using the function $\sigma_a^\alpha$ introduced in \eqref{e:sigma},
%
%e7.43 #&#
\begin{eqnarray}
\Psi_{\varepsilon}\bigl(\pi^{d(\varepsilon)v_\varepsilon}\bigr) &=&\varepsilon^{-1}
\biggl(1-\int_{\mathbb{R}_+} e^{-q(\varepsilon)\lambda u} \pi^{d(\varepsilon)v_\varepsilon}(du)
\biggr)
\nonumber
\\
&=&\varepsilon^{-1} \biggl(1-\int_{\mathbb{R}_+}
e^{-\varepsilon d(\varepsilon)^{-1}\lambda u} \pi^{d(t(\varepsilon))}(du) \biggr)
\nonumber
\\
& =& \varepsilon^{-1} \biggl(1-\int_{\mathbb{R}_+}
e^{-\varepsilon v_\varepsilon t(\varepsilon)^{-1}
q(t(\varepsilon))\lambda u} \pi^{d(t(\varepsilon))}(du) \biggr)
\\
&= &\frac{t(\varepsilon)}{\varepsilon} \biggl(\frac{\varepsilon v_\varepsilon}{t(\varepsilon)} \biggr) ^{\gamma/{(1+\gamma)} }
\sigma^{1+1/\gamma}_{ ({\varepsilon v_{\varepsilon}}/
{t(\varepsilon)} )^{-{\gamma}/{(\gamma+1)}}} \bigl(\Psi_{t(\varepsilon)}\bigl(
\pi^{d(t(\varepsilon))}\bigr)\bigr)
\nonumber
\\
&= &\biggl(\frac{t(\varepsilon)}{\varepsilon} \biggr)^{{1}/{(\gamma+1)}} v_\varepsilon^{{\gamma}/{(\gamma+1)}}
\sigma^{1+1/\gamma}_{v^\gamma ({\varepsilon v_\varepsilon}/
{t(\varepsilon)} )^{-{\gamma}/{(\gamma+1)}}} \bigl(\sigma^{1+1/\gamma}_{v^{-\gamma}}
\bigl(\Psi_{t_\varepsilon}\bigl(\pi^{d(t(\varepsilon))}\bigr)\bigr) \bigr).\nonumber
\end{eqnarray}
As $d(\varepsilon)$, and thus $d^{-1}(\varepsilon)$ are regularly varying,
%
%e7.44 #&#
\begin{equation}
\frac{t(\varepsilon)}{\varepsilon} =\frac{d^{-1}(v_\varepsilon d(\varepsilon))}{d^{-1}(d(\varepsilon))} \xrightarrow{\varepsilon\to0}
v^{-\gamma}.
\end{equation}
Hence,
%
%e7.45 #&#
\begin{equation}
\biggl(\frac{t(\varepsilon)}{\varepsilon} \biggr)^{{1}/{(\gamma+1)}} v_\varepsilon^{{\gamma}/{(\gamma+1)}}
\xrightarrow{\varepsilon \to0}1,
\end{equation}
and similarly
%
%e7.46 #&#
\begin{equation}
v_\varepsilon^{\gamma} \biggl(\frac{\varepsilon v_\varepsilon}{t(\varepsilon)} \biggr)
^{-{\gamma}/{(\gamma+1)}} \xrightarrow{\varepsilon\to0}1,
\end{equation}
and thus
$\sigma^{1+1/\gamma}_{v_\varepsilon^{\gamma} (
{\varepsilon v_\varepsilon}/{t(\varepsilon)} )^{-{\gamma
}/{(\gamma+1)}}}$
converges to the identity. Assumption (L) together with
$t(\varepsilon) \to0$ and \eqref{e:Fv} then implies the lemma.
\end{pf}

The space $C(\mathbb{R}_+)$, and thus
$\mathfrak F^\ast\subset C(\mathbb R_+)$, endowed with the topology of
uniform convergence over compact sets is separable. It is a known fact
that in the space $\mathfrak F^\ast$ the pointwise convergence and the
uniform convergence over compact sets coincide. (Recall
$\mathfrak F^\ast$ is the space of Laplace exponents. When the Laplace
exponents converge pointwise to an element of $\mathfrak F^\ast$, the
corresponding probability measures converge weakly, which in turns
gives the uniform convergence over compacts.) We deduce that
$\mathfrak F^\ast$ with the topology of pointwise convergence is also
separable.

We further consider a measurable space $(\Omega_2, \mathcal F_2)$
and construct a probability kernel $\mathbb P_2^\cdot$ from $\Omega
_1$ to
$\Omega_2$, and $\mathfrak F^\ast$-valued random variables
$(\psi_z^\varepsilon)_{z\in\mathbb Z, \varepsilon>0}$,
$(f_i)_{i\in\mathbb N}$ on $\Omega_2$
such that under $\mathbb P_2^\omega$ the random variables
$(\psi_z^\varepsilon)_{z\in\mathbb Z} $ are
independent for every $\varepsilon>0$, $\psi_z^\varepsilon$
has the same distribution as
$\Psi_\varepsilon(\pi_z^{m^\varepsilon_z(\omega)})$, and $f_i$,
$i\in\mathbb N$, are i.i.d. with marginal $\mathbb F_1$. As
$v_i^\varepsilon:=d(\varepsilon)^{-1 } m_{z_i^\varepsilon
}^\varepsilon \to v_i$,
by Lemma~\ref{l:onetrap},
%
%e7.47 #&#
\begin{equation}
\label{e:psisconv} \psi^\varepsilon_{z_i^\varepsilon} \xrightarrow{\varepsilon
\to0} \sigma^{1+1/\gamma}_{v_i^{-\gamma}} f_i\qquad \mbox{for all $i\in
\mathbb N$,}
\end{equation}
in distribution on $\mathfrak F^\ast$. Using the separability of
$\mathfrak F^\ast$ and thus of $(\mathfrak F^\ast)^{\mathbb Z}$, by
Skorokhod representation theorem, we may require that
$\psi^\varepsilon_z$'s are such that this convergence holds
$\mathbb P_2^\omega$-a.s.

We take $\bar\Omega= \Omega_1\times\Omega_2$,
$\bar{\mathcal F}=\mathcal F_1\otimes\mathcal F_2$ and we define
$\bar\Pb$ to be a semidirect product
%
%e7.48 #&#
\begin{equation}
\bar\Pb[A] = \int_{\Omega_1} \mathbb P^{\omega_1}_2
\bigl[\bigl\{\omega_2\dvtx (\omega_1,\omega_2)
\in A\bigr\}\bigr] \mathbb P_1(d \omega_1).
\end{equation}
For $\bar\omega=  (\omega_1,\omega_2) \in\bar\Omega$, we
define sequences
of probability measures $(\pi^\varepsilon_z(\bar\omega))_{z\in
\mathbb Z}$,
$\varepsilon>0$, by requiring that
%
%e7.49 #&#
\begin{equation}
\label{e:GFINpsis} \Psi_\varepsilon\bigl(\pi^\varepsilon_z(
\bar\omega)\bigr) =\psi^\varepsilon_z(\bar\omega).
\end{equation}
This determines $\pi^\varepsilon_z(\bar\omega)$ uniquely, because
$\Psi_\varepsilon$
is an affine transformation of the Laplace transform. Since
$(m_z^\varepsilon)_z $ has the same distribution as $(m (\pi_z))_z$ and
$(\psi_z^\varepsilon)_z $ has the same distribution as
$(\Psi(\pi_z^{m_z^\varepsilon}))_z$, it follows that for every
$\varepsilon>0$,
$(\pi_z^\varepsilon)_z$ has the same distribution as $(\pi_z)_z$.

Finally, we set $\bar\mu^\varepsilon_{\bar\omega} $ to be the trap
measure on $(\tilde\Omega,\tilde{\mathcal F},\tilde\Pb)$ with the
trapping landscape $(\pi_z^\varepsilon(\bar\omega) )_z$, and define
$\bar\mu^\varepsilon$ to be mixture of
$\bar\mu^\varepsilon_{\bar\omega} $
w.r.t. $\bar\Pb$. From the previous discussion, it is obvious that
$\bar\mu^\varepsilon$ satisfy (a), (b) of the
Lemma~\ref{l:GFINcoupling}. We define
%
%e7.50 #&#
\begin{equation}
\bar\mu^0_{\bar\omega} =\mu_{(x_i(\omega_1),\sigma^{1+1/\gamma}_{v_i(\omega_1)^{-\gamma}
}f_i(\omega_1))}
\end{equation}
(see Example~\ref{ex:GFIN} for the notation)
and set $\bar\mu^0$ to be mixture of $\bar\mu^0_{\bar\omega}$
w.r.t. $\bar\Pb$. The measure $\bar\mu^0$ clearly satisfies (a), (c) of
the lemma.

\emph{$\bar\Pb$-a.s. convergence of
$\bar\mu^\varepsilon_{\bar\omega}$.} We need to show that
$\bar\Pb$-a.s., the trap measures
$\rho(\varepsilon)\mathfrak S_\varepsilon(\bar\mu^\varepsilon
_{\bar\omega})$
converge to the L\'evy trap measure $\bar\mu^0_{\bar\omega}$ vaguely
in distribution. Using Proposition~\ref{p:Levymeasureconv}, it is
sufficient to check that
%
%e7.51 #&#
\begin{equation}
\label{} \rho(\varepsilon)\mathfrak S_\varepsilon\bigl(\bar
\mu^\varepsilon_{\bar\omega}\bigr) \bigl(I\times[0,1]\bigr) \xrightarrow{
\varepsilon\to0} \bar\mu^0_{\bar\omega}\bigl(I\times[0,1]\bigr)
\end{equation}
$\bar\Pb$-a.s., in distribution, for every interval $I=[a,b]$ whose boundary
points are not in the set $\{x_i\dvtx i\in\mathbb N\}$. Computing Laplace
transforms, and taking $-\log$, the last display is equivalent to
%
%e7.52 #&#
\begin{equation}
\label{e:GFINLap} -\sum_{z:z\varepsilon\in I} \varepsilon^{-1}
\log \bigl(\hat \pi^\varepsilon_z(\bar\omega) \bigl(\rho(
\varepsilon)\lambda\bigr) \bigr) \xrightarrow{\varepsilon\to0} \sum
_{i:x_i\in I} \sigma^{1+1/\gamma}_{v_i(\bar\omega)^{-\gamma}
} f_i(
\bar \omega) (\lambda),
\end{equation}
for all $\lambda\ge0, \bar\Pb\mbox{-a.s.}$

We fix $\delta, \delta'>0$ (depending on $\bar\omega$) such that
%
%e7.53 #&#
\begin{equation}
\label{e:malasuma} \sum_{i:x_i\in I}v_i\1\bigl
\{v_i\le\delta'\bigr\} \le\delta.
\end{equation}
This is always possible as $V$ is an increasing pure jump process and
$V(b)-V(a)$ is $\bar\Pb$-a.s. finite. We define a finite set
$J:=\{i\dvtx x_i\in I, v_i>\delta'\}$. We consider $\varepsilon$ small
enough so that $z_i^\varepsilon$ is defined for all $i\in J$, and set
$J^\varepsilon=\{z_i^\varepsilon\dvtx  i\in J\}$. We consider separately
the sum over $J$ and its complement.

We start with the sum over $J$. Observe that as the boundary points of
$I$ are not in $\{x_i\}_i$, $\varepsilon z^\varepsilon_i\in I$ for all
$\varepsilon$ small enough. By the coupling construction, more
precisely by \eqref{e:psisconv} and \eqref{e:GFINpsis}, using that $J$
is finite and some elementary analysis, we see that for $\delta$,
$\delta'$ fixed, $\bar\Pb$-a.s.,
%
%e7.54 #&#
\begin{eqnarray}
-\sum_{z\in J^\varepsilon} \varepsilon^{-1}\log \bigl(
\hat \pi^\varepsilon_z(\bar\omega) \bigl(\rho(\varepsilon)\lambda
\bigr) \bigr) \xrightarrow{\varepsilon\to0} \sum_{i\in J}
\sigma^{1+1/\gamma}_{v_i(\bar\omega)^\gamma} f_i(\bar \omega) (\lambda)
\nonumber
\\[-8pt]
\\[-8pt]
\eqntext{\forall\lambda\ge0, \bar\Pb\mbox{-a.s.}}
\end{eqnarray}

The contribution of $i\notin J$ might be neglected on the right-hand
side of~\eqref{e:GFINLap}. Indeed, by Remark~\ref{r:fboudedness} and
\eqref{e:malasuma},
%
%e7.55 #&#
\begin{eqnarray}
0&\le&\sum_{i\notin J:x_i\in I} \sigma^{1+1/\gamma}_{v_i(\bar\omega)^{-\gamma} }
f_i(\bar\omega) (\lambda) = \sum_{i\notin J:x_i\in I}
v_i^{-\gamma} f_i\bigl(v_i^{\gamma+1}
\lambda\bigr)
\nonumber
\\[-8pt]
\\[-8pt]
\nonumber
&\le& \lambda \sum_{i\notin J:x_i\in I} v_i
\le\lambda\delta.
\end{eqnarray}

Finally, the contribution of the sum over $z\notin J^\varepsilon$ on
the left-hand side of \eqref{e:GFINLap} is asymptotically negligible.
Indeed, as $J$ is finite,
$\varepsilon^{1/\gamma} m_{z_i^\varepsilon}^\varepsilon\to v_i$ for
every $i\in J$, and
$\varepsilon^{1/\gamma}V^\varepsilon_{\varepsilon^{-1}\cdot} $
converges to $V$, it follows that for $\varepsilon$ small enough
%
%e7.56 #&#
\begin{equation}
d(\varepsilon)^{-1 } \sum_{z\in\varepsilon^{-1}I\setminus J^\varepsilon}
m_z^\varepsilon\le2 \delta.
\end{equation}
It follows that $m_z^\varepsilon\le2\delta d(\varepsilon)$,
and thus
$m^\varepsilon_z \rho(\varepsilon)\xrightarrow{\varepsilon\to0} 0$,
for every $z\notin J^\varepsilon$. From
$m(\pi^\varepsilon_z)=m^\varepsilon_z$, it follows that
$\hat\pi^\varepsilon_z(\rho(\varepsilon) \lambda)\ge
1-m^\varepsilon_z \rho(\varepsilon)\lambda$.
Using the inequality $-\log x \le2(1-x)$ which holds in some interval
$(c,1]$, we obtain
%
%e7.57 #&#
\begin{eqnarray}
0&\le& -\sum_{z\in\varepsilon^{-1}I\setminus J^\varepsilon} \varepsilon^{-1}\log
\bigl(\hat \pi^\varepsilon_z(\bar\omega) \bigl(\rho(\varepsilon)
\lambda\bigr) \bigr) \le2 \sum_{z\in\varepsilon^{-1}I\setminus J^\varepsilon}
\varepsilon^{-1} m_z^\varepsilon \rho(\varepsilon)
\lambda
\nonumber
\\[-8pt]
\\[-8pt]
\nonumber
&=&2\lambda \varepsilon^{1/\gamma} \sum_{z\in\varepsilon^{-1}I\setminus J^\varepsilon}
m_z^\varepsilon\le4\lambda\delta,
\end{eqnarray}
by \eqref{e:malasuma} again. This completes the proof.
\end{pf*}

%s7.5 #&#
\subsection{Convergence to FIN}\label{ss:FINconvergence} %<<<2
Since the FIN diffusion is a special case of the SSBM (see
Definition~\ref{d:FIN}), we can specialize Theorem~\ref{t:RSBMconv}
to obtain criteria for the convergence of
a rescaled RTRW with i.i.d. trapping landscape is the FIN diffusion.
Here, we present the proof of such convergence as stated in Theorem~\ref{t:FINconv}.
We recall that $\mu$ is a trapping measure of a RTRW
$X=Z[\mu]$ with an i.i.d. random trapping landscape $\bolds\pi$
whose marginal is $P$.

\begin{pf*}{Proof of Theorem~\ref{t:FINconv}}
Due to Definition~\ref{d:FIN} and the scaling property~\eqref{e:GFINF},
we only need to verify Assumption (L) with
$\mathbb F_1=\delta_{\lambda\mapsto\lambda}$.
For all positive~$x$, it holds that
$x-\frac{x^2}{2}\leq1-e^{-x}\leq x$.
Inserting this inequality in the definition of $\Psi_\varepsilon$, we
obtain
%
%e7.58 #&#
\begin{eqnarray}
\varepsilon^{-1} \bigl(\lambda q(\varepsilon) m\bigl(
\pi^{d(\varepsilon)}\bigr) -\tfrac{1}2 q(\varepsilon)^2
\lambda^2 m_2\bigl(\pi^{d(\varepsilon)}\bigr) \bigr) &\leq&
\Psi_\varepsilon\bigl(\pi^{d(\varepsilon)} \bigr) (\lambda)
\nonumber
\\[-8pt]
\\[-8pt]
\nonumber
&\leq&\varepsilon^{-1}\lambda q(\varepsilon) m\bigl(\pi ^{d(\varepsilon)}
\bigr).
\end{eqnarray}
Taking the limit $\varepsilon\to0$ in this inequality, recalling
$q(\varepsilon) = \varepsilon d(\varepsilon)^{-1}$, we obtain using
the assumptions of the theorem
%
%e7.59 #&#
\begin{equation}
\lim_{\varepsilon\to0} \Psi_\varepsilon\bigl(\pi^{d(\varepsilon)}
\bigr) (\lambda) = \lambda
\end{equation}
in distribution. This completes the proof.
\end{pf*}

%s8 #&#
\section{Applications} %<<<1
\label{s:applications}

In this section, we make use of the previously developed theory to
prove Theorems \ref{t:phasediagramtransparenttraps} and \ref
{t:phasediagramcombmodel}.

%s8.1 #&#
\subsection{The simplest case of a phase transition} %<<<2
\label{ss:faketrap}
Recall from Definition~\ref{d:faketrap}, that the trap model with
transparent traps is defined using two positive parameters $\alpha$,
$\beta$, a family $(\tau_x)_{x\in\mathbb{Z}}$
of i.i.d. random variables satisfying $\tau_x>1$ and
%
%e8.1 #&#
\begin{equation}
\label{e:newtail} \lim_{u\to\infty}u^{\alpha}\mathbb{P}(
\tau_0>u)=c\in (0,\infty),
\end{equation}
and its i.i.d. trapping
landscape $\bolds\pi= (\pi_x)_{x\in\Z}$, where
%
%e8.2 #&#
\begin{equation}
\pi_x(\omega):=\bigl(1-\tau_x(\omega)^{-\beta}
\bigr)\delta_1 +\tau_x(\omega)^{-\beta}
\delta_{\tau_x(\omega)}.
\end{equation}
In words, given $\tau_x$'s, at site $x$ the walk is trapped for time
$\tau_x$ with
probability $\tau_x^{-\beta}$, otherwise it spends just a unit time
at $x$.
Here, we present the proof of Theorem~\ref{t:phasediagramtransparenttraps}.
%
%re8.1 #&#
\begin{remark}
For the sake of simplicity, during the computations we will replace the
traps $\pi_x:=(1-\tau_x^{-\beta})\delta_1
+\tau_x^{-\beta}\delta_{\tau_x}$ by $(1-\tau_x^{-\beta})\delta_0
+\tau_x^{-\beta}\delta_{\tau_x}$. It should be clear that the
asymptotics should be the same in both cases.
\end{remark}

\begin{pf*}{Proof of Theorem~\ref{t:phasediagramtransparenttraps}}
By the definition of the model,
$m(\pi_z(\omega)) = \tau_z(\omega)^{1-\beta}$,
and thus
%
%e8.3 #&#
\begin{equation}
\label{e:transtail} \lim_{x\to\infty} x^{\alpha/{(1-\beta)}} \Pb\bigl[ m(
\pi_z)\ge x\bigr]=1.
\end{equation}
When $\alpha+\beta>1$, $m(\pi_z)$ has finite expectation, and
Theorem~\ref{t:BMconvergence} yields claim (i).

For claims (ii) and (iv), Condition (HT) is verified due to
\eqref{e:transtail}. The function $d(\varepsilon)$ introduced in
Remark~\ref{rk:assumptionpp} may be chosen to be
$d(\varepsilon) = \varepsilon^{-1/\gamma}$. Conditioning on
$m(\pi_0)=d(\varepsilon)$ is equivalent to conditioning on
$\tau_0^{1-\beta}=\varepsilon^{-1/\gamma}$, which, in turn, is
equivalent to $\tau_0=\varepsilon^{-1/\alpha}$. Hence, conditionally on
$m(\pi_0)=d(\varepsilon)$, $\pi_0$ is deterministic probability
measure
$\pi^{d(\varepsilon)}_z=(1-\varepsilon^{\beta/\alpha})\delta
_0+\varepsilon^{\beta/\alpha}\delta_{\varepsilon^{-1/\alpha}}$,
and
%
%e8.4 #&#
\begin{equation}
\hat{\pi}^{d(\varepsilon)}(\lambda) =1-\varepsilon^{\beta/\alpha} +
\varepsilon^{\beta/\alpha}\exp\bigl(-\lambda\varepsilon^{-1/\alpha}\bigr).
\end{equation}
Therefore, $\Psi_\varepsilon(\hat{\pi}^{d(\varepsilon)})$ is
deterministic,
%
%e8.5 #&#
\begin{equation}
\Psi_\varepsilon\bigl(\hat{\pi}^{d(\varepsilon)}\bigr) (\lambda) =
\varepsilon^{(\beta-\alpha)/\alpha} \bigl(1-\exp\bigl(-\lambda\varepsilon^{(\alpha-\beta)/\alpha}
\bigr)\bigr).
\end{equation}

When $\alpha+ \beta<1$ and $\alpha>\beta$, this implies
$\lim_{\varepsilon\to0}\Psi_\varepsilon(\hat{\pi}^{d(\varepsilon
)})(\lambda)=\lambda$.
Hence, Condition (L) is verified,
and Theorem~\ref{t:RSBMconv} together with Definition~\ref{d:FIN} yields
claim (ii).

Similarly, when $\alpha+ \beta<1$ and $\alpha= \beta$,
$\lim_{\varepsilon\to0}\Psi_\varepsilon(\hat{\pi}^{d(\varepsilon
)})(\lambda)=1-\exp(-\lambda)$,
which implies (iv). Observe that in this case, the traps are
``Poissonian'' in the sense that $\mathbb{F}_1$ is concentrated on
$\lambda\mapsto1-\exp(-\lambda)$, which is the Laplace exponent of a
Poisson process.

When $\alpha+\beta<1$ and $\alpha<\beta$,
$\Psi_\varepsilon(\hat{\pi}^{d(\varepsilon)})$ converges to $0$,
indicating that Theorem~\ref{t:FKconv} should be used instead of
Theorem~\ref{t:RSBMconv}. Recall that
$\Gamma(\varepsilon)=\E(1-\hat{\pi}(\varepsilon))$. We will first show
that $\Gamma(\varepsilon)$ is regularly varying of index $\kappa$ at
$\varepsilon=0$. Let $\nu$ be the distribution of $\tau_0$. Then
%
%e8.6 #&#
\begin{equation}
\E\bigl(1-\hat{\pi}(\varepsilon)\bigr)= \int_0^{\infty}
t^{-\beta}\bigl(1-\exp(-\varepsilon t)\bigr)\nu(dt).
\end{equation}
Changing variables, we obtain
%
%e8.7 #&#
\begin{equation}
\E\bigl(1-\hat{\pi}(\varepsilon)\bigr) =\varepsilon^\beta\int
_0^{\infty}t^{-\beta}\bigl(1-\exp(-t)\bigr)\nu
\bigl(\varepsilon^{-1} \,dt\bigr).
\end{equation}
By \eqref{e:newtail},
$\varepsilon^{-\alpha}\nu(\varepsilon^{-1} \,dt)$ converges weakly to
$c \alpha t^{-1-\alpha}\,dt$. Hence, as $\varepsilon\to0$,
%
%e8.8 #&#
\begin{equation}
\E\bigl(1-\hat{\pi}(\varepsilon)\bigr) =c \alpha\varepsilon^{\alpha+\beta}\int
_0^{\infty}t^{-1-\alpha
-\beta}\bigl(1-\exp(-u)\bigr) \,du
\bigl(1+o(1)\bigr).
\end{equation}
The integral on the right-hand side is finite, so the condition
\eqref{e:qFKcond} [cf. also \eqref{e:qFKcondeq}] of
Theorem~\ref{t:FKconv} is verified with
$q_{\FK}(\varepsilon) = \varepsilon^{2/\kappa}$. Similarly, as
$\varepsilon\to0$,
%
%e8.9 #&#
\begin{eqnarray}
\E\bigl(\bigl(1-\hat{\nu}(\varepsilon)\bigr)^{2}\bigr) &=&\alpha\int
_0^\infty t^{-2\beta}\bigl(1-\exp(-
\varepsilon t)\bigr)^2 \nu(dt)
\nonumber
\\[-8pt]
\\[-8pt]
\nonumber
&=&\alpha\varepsilon^{2\beta+\alpha} \int_0^\infty
u^{-2\beta-1-\alpha}\bigl(1-\exp(-u)\bigr)^2 \,du \bigl(1+o(1)\bigr),
\end{eqnarray}
leading to
%
%e8.10 #&#
\begin{equation}
\varepsilon^{-3}\Pb\bigl(\bigl(1-\hat{\pi}\bigl(q_{\FK}(
\varepsilon)\bigr)\bigr)^2\bigr) \xrightarrow{\varepsilon\to0} 0.
\end{equation}
This verifies the assumptions of Theorem~\ref{t:FKconv} and proves
claim (iii).
\end{pf*}

%s8.2 #&#
\subsection{The comb model} %<<<2
\label{ss:combmodel}
In this section we give the proof of Theorem~\ref{t:phasediagramcombmodel}.

\begin{pf*}{Proof of Theorem~\ref{t:phasediagramcombmodel}}
To prove the theorem, we first need to control the distribution of the time
that the random walk $Y^\comb$ spends in the teeth of the comb.
Therefore, for $N\ge1$, we let $V^N = (V^N_k)_{k\ge0}$ be a random
walk on
$\{0,\ldots,N\}$ with drift $g(N)$, reflected at $N$, started from
$V^N_0=1$. Let $\tau^N = \inf\{n\ge0, V^N_n=0\}$ be the hitting time of
$0$ by $V^N$, and let $\theta^N$ be the law of $\tau^N$.

It is easy to see that the distribution $\pi_z$ of the time that
$X^\comb$ spends on one visit to $z$ coincides with the law of
$1+\sum_{i=1}^G (1+ \xi^z_i)$, where $\xi^z_i$ are i.i.d. with distribution
$\theta^{N_z}$, and $G$ is a geometric random variable with parameter
$\frac{2}3$, $\Pb[G=k]=\frac{2}3(\frac{1}3)^k$, $k\ge0$ (for $G=0$ the
above sum is zero, by definition).
In particular,
%
%e8.11 #&#
%e8.12 #&#
%e8.13 #&#
\begin{eqnarray}
\label{e:firstmomentthetattopi} m(\pi_z)&= &\frac{3+m(\theta^{N_z})}2,
\\
\label{e:secondmomomentthetatopi} m_2(\pi_z)&=&\frac{1}2
\bigl(m_2\bigl(\theta^{N_z}\bigr)+6m\bigl(
\theta^{N_z}\bigr)+m\bigl(\theta^{N_z}\bigr)^2+6
\bigr),
\\
\hat\pi_z(\lambda)&= &\frac{2 }{3e^{\lambda}-\hat\theta^{N_z}
(\lambda)}.
\end{eqnarray}
As a consequence,
%
%e8.14 #&#
\begin{equation}
\label{e:hatthetatopi} 1-\hat\pi_z(\lambda) = \tfrac{1}2 \bigl(3
\lambda + \bigl(1-\hat\theta^{N_z}(\lambda)\bigr) \bigr) \bigl(1+o(1)
\bigr) \qquad\mbox{as $\lambda\to0$}. %(\lambda+ \big(1-\hat\theta^{N_z}(\lambda))/2\big)(1+o(1)),
%  \mbox{as $\lambda\to0$}.
\end{equation}

The distribution $\theta^N$ is characterized by the following lemma.

%le8.2 #&#
\begin{lemma}
Let $p=(1+g(N))/2$, $\xi=(1-p)/p$, and
%
%e8.15 #&#
\begin{equation}
\label{e:chi} \chi(s) = \chi(s,p) = \frac{1+\sqrt{1-4s^2p(1-p)}}{2sp},\qquad s\in(0,1].
\end{equation}
Then the generating function of $\theta^N$ is given by
%
%e8.16 #&#
\begin{eqnarray}
\label{generatingfunction} \hat\theta^N(-\log s) &=& \mathbb E
\bigl[s^{\tau^N}\bigr]
\nonumber
\\[-8pt]
\\[-8pt]
\nonumber
&=&\frac{\xi\chi(s)^{2N-2}(\chi(s)-s)+\xi^{N-1}\chi(s)(s\chi
(s)-\xi)}{
\chi(s)^{2N-1}(\chi(s)-s)+\xi^{N-1}(s\chi(s)-\xi)}.
\end{eqnarray}
\end{lemma}

\begin{pf}
The proof is a standard one-dimensional random walk computation.
Writing $f_x(s) = \mathbb E[s^{\tau^N}|V_0=x]$ for the generating
function of $\tau^N$ for the random walk starting at $x$ [i.e., $\hat
\theta^N(-\log s) = f_1(s)$], we
have the equation
%
%e8.17 #&#
\begin{equation}
\label{recurrence} f_x(s)=spf_{x+1}(s)+s(1-p)f_{x-1}(s)\qquad
\mbox{for }1\leq x\leq N-1,
\end{equation}
with the boundary conditions $f_0(s)=1$, and $f_N(s)=sf_{N-1}(s)$.
Solving this system, we obtain
%
%e8.18 #&#
\begin{equation}
f_x(s)=A_+(s)\lambda_+(s)^x+A_-(s)\lambda_-(s)^x,
\end{equation}
with $\lambda_+(s)=\chi(s) $, $\lambda_-(s)=\xi/\chi(s) $ and
%
%e8.19 #&#
\begin{eqnarray}
A_+(s)&=&\frac{-\lambda_-(s)^{N-1} (\lambda_-(s)-s)}{
\lambda_+(s)^{N-1}(\lambda_+(s)-s)-\lambda_-(s)^{N-1}(\lambda_-(s)-s)},
\\
A_-(s)&=&\frac{\lambda_+(s)^{N-1}(\lambda_+(s)-s)}{
\lambda_+(s)^{N-1}(\lambda_+(s)-s)-\lambda_-(s)^{N-1}(\lambda_-(s)-s)}.
\end{eqnarray}
A simple rearrangement yields the claim.
\end{pf}

Knowing the generating function, the moments of $\theta^N$ can be
obtained easily. We collect the asymptotic behavior of the first
and second moments in the following lemma.
Its proof is an easy asymptotic analysis of the derivatives of the
generating function of $\theta^N$ and is omitted.

%le8.3 #&#
\begin{lemma}
\label{l:moments}
When $\beta>0$, as $N\to\infty$,
the first and second moment of $\theta^N$ satisfy
%
%e8.21 #&#
\begin{equation}
\label{e:moments} m\bigl(\theta^N\bigr) \sim\frac{N^{2\beta+1}}{\beta\log(N)},\qquad
m_2\bigl(\theta^N\bigr)\sim\frac{N^{3+4\beta}}{\beta^3\log^3(N)},
\end{equation}
where $f\sim g$ as $N\to\infty$ means
$\lim_{N\to\infty}f/g=1$. Moreover, when $\beta=0$, then
$m(\theta^N)\sim2 N$.
\end{lemma}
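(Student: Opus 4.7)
My plan is to work directly with the hitting-time moments rather than differentiating the generating function \eqref{generatingfunction}. Set $h_k = \mathbb{E}_k[\tau^N]$ and $m_k^{(2)} = \mathbb{E}_k[(\tau^N)^2]$ for the random walk $V^N$ started at $k$, with $p = (1+g)/2$, $q = 1-p$ and $\xi = q/p$. The Markov one-step decomposition gives the linear recurrences
\begin{equation*}
  h_k = 1 + p\, h_{k+1} + q\, h_{k-1}, \qquad
  m_k^{(2)} = (2 h_k - 1) + p\, m_{k+1}^{(2)} + q\, m_{k-1}^{(2)},
\end{equation*}
for $1 \leq k \leq N-1$, with $h_0 = m_0^{(2)} = 0$ and the reflection conditions $h_N = 1 + h_{N-1}$ and $m_N^{(2)} = 1 + 2 h_{N-1} + m_{N-1}^{(2)}$.

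For the first moment I introduce $d_k = h_{k+1} - h_k$, which satisfies the first-order recurrence $d_k = \xi d_{k-1} - 1/p$ with $d_{N-1} = 1$. Iterating backwards and summing yields the explicit formulas
\begin{equation*}
  h_1 = d_0 = \frac{(1+g)\,\xi^{-(N-1)} - 1}{g}, \qquad
  h_k = -\frac{k}{g} + \frac{(1+g)^2}{2 g^2}\,\xi^{-(N-1)}\bigl(1 - \xi^k\bigr).
\end{equation*}
When $\beta > 0$ and $g = \beta N^{-1}\log N$, the expansion $\log \xi = -2g + O(g^3)$ gives $\xi^{-(N-1)} = \exp[2\beta \log N(1+o(1))] \sim N^{2\beta}$, so $m(\theta^N) = h_1 \sim N^{2\beta+1}/(\beta \log N)$. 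The driftless case $\beta = 0$ is treated separately: one checks by substitution that $h_k = 2Nk - k^2$ solves the $g = 0$ recurrence together with all boundary conditions, so $h_1 = 2N - 1 \sim 2N$.

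The second moment is treated analogously. Setting $D_k = m_{k+1}^{(2)} - m_k^{(2)}$ gives $D_k = \xi D_{k-1} - (2 h_k - 1)/p$ with $D_{N-1} = 1 + 2 h_{N-1}$, and iterating backwards produces
\begin{equation*}
  m_2(\theta^N) = D_0 = \xi^{-(N-1)}\bigl(1 + 2 h_{N-1}\bigr) + p^{-1}\sum_{k=1}^{N-1} \xi^{-k}(2 h_k - 1).
\end{equation*}
Substituting the closed form of $h_k$ and using $\sum_{k=1}^{N-1}\xi^{-k} \sim \xi^{-(N-1)}/(2g) \sim N^{2\beta}/(2g)$, the dominant contribution comes from the piece $p^{-1}(1+g)^2 g^{-2}\,\xi^{-(N-1)}\sum_{k=1}^{N-1}\xi^{-k}$, which carries two factors of $\xi^{-(N-1)} \sim N^{2\beta}$ and three independent factors of $g^{-1} \sim N/(\beta \log N)$; combining gives $m_2(\theta^N) \sim N^{4\beta+3}/(\beta^3 \log^3 N)$ as claimed.

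The main obstacle is the second-moment bookkeeping. Several competing terms of plausible size must be shown to be asymptotically negligible against $N^{4\beta+3}/(\beta^3 \log^3 N)$: the boundary term $\xi^{-(N-1)}(1 + 2 h_{N-1}) \sim N^{4\beta+2}/(\beta^2 \log^2 N)$, the linear piece $-(2/g)\sum_{k=1}^{N-1} k\xi^{-k} \sim -N^{2\beta+3}/(\beta^2 \log^2 N)$, and the constant shift $-(1+g)^2 \xi^{-(N-1)}(N-1)/g^2 \sim -N^{2\beta+3}/(\beta^2 \log^2 N)$. Each falls short of the leading term by at least a factor of $\log N$ or $N^{2\beta}$, so the precise constant $1/\beta^3$ arises cleanly from the product of exactly three $g^{-1}$ factors in the dominant term.
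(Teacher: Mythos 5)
Your proof is correct and takes a genuinely different (and arguably cleaner) route than the paper's sketch. The paper's one-line remark suggests differentiating the explicit generating function \eqref{generatingfunction} twice at $s=1$ and then doing asymptotics; you instead derive first-order difference equations for the moment sequences $h_k$ and $m^{(2)}_k$ directly from the Markov one-step decomposition, solve them in closed form via the difference variables $d_k$, $D_k$, and read off the asymptotics. I have checked the recurrences, the reflection boundary conditions $h_N = 1 + h_{N-1}$, $m_N^{(2)} = 1+2h_{N-1}+m_{N-1}^{(2)}$, the explicit formulas for $h_1$, $h_k$, $D_0$, and the order of each of the four pieces in $D_0$, and all are correct: the dominant piece is $p^{-1}(1+g)^2 g^{-2}\xi^{-(N-1)}\sum_k \xi^{-k} \sim N^{4\beta+3}/(\beta^3\log^3 N)$, the boundary term is $O(N^{4\beta+2}/\log^2 N)$, and the two other pieces are $O(N^{2\beta+3}/\log^2 N)$, all genuinely subleading. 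The chief advantage of your approach is that the cancellation structure is transparent (the constant $1/\beta^3$ falls out as a product of three clean $g^{-1}$ factors), whereas differentiating \eqref{generatingfunction} twice produces a thicket of terms. Two small remarks: you occasionally drop the bounded factor $1/p$ in the order estimates of the subleading terms (harmless, since they are only order bounds), and the shortfall factors you name ($\log N$ or $N^{2\beta}$) are weaker than the actual ones ($N/\log N$ and $N^{2\beta}/\log N$), but since you write ``at least'' the statement remains true. The $\beta = 0$ verification via $h_k = 2Nk - k^2$ is also correct.
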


We further need to control the behavior of
$1-\hat\theta^N(\varepsilon)$ as $\varepsilon\to0$ and for $N$
possibly diverging with $\varepsilon$. This is the content of the next
two lemmas. Both these lemmas contain an asymptotic statement where the
dependence of $N$ on $\varepsilon$ is explicitly given (which later
will be used to control the dominant contributions of various
convergence conditions), and an upper bound that holds uniformly over
$N$ for all $\varepsilon$ small enough (which will be used to
bound the error terms).

%le8.4 #&#
\begin{lemma}
\label{l:tanh}
Let $\beta=0$. Then, for every $y>0$,
%
%e8.22 #&#
\begin{equation}
\frac{(1-\hat\theta^{\lfloor y /\sqrt{2\varepsilon} \rfloor
}(\varepsilon))}{
\sqrt{2\varepsilon}}\xrightarrow{\varepsilon\to0} \tanh(y),
\end{equation}
and there exists $c>0$ such that for all $N\ge1$ and
$\varepsilon\in(0,1/2)$
%
%e8.23 #&#
\begin{equation}
1-\hat\theta^N(\varepsilon)\le c \sqrt{\varepsilon}.
\end{equation}
\end{lemma}

\begin{pf}
From \eqref{generatingfunction}, we obtain
%
%e8.24 #&#
\begin{eqnarray}
\label{e:oneminustheta} &&1 -\hat\theta^N (-\log s)
\nonumber
\\[-8pt]
\\[-8pt]
\nonumber
&&\qquad= \frac{(\chi(s) - \xi)\chi
(s)^{2N-2}(\chi(s) - s) + \xi^{N-1}(1 - \chi(s))(s\chi(s) -
\xi)}{\chi(s)^{2N-1}(\chi(s) - s)+\xi^{N-1}(s\chi(s) - \xi)}.
\end{eqnarray}
When $\beta=0$, then $\xi=1$, and $\chi(s) =(1+\sqrt{1-s^2})/s$
(which is independent of~$N$).
Therefore, setting $s=e^{-\varepsilon} \sim1- \varepsilon$, we find as
$\varepsilon\to0$,
%
%e8.25 #&#
\begin{equation}
\label{e:chichi} \chi(s) - 1 \sim\sqrt{2\varepsilon}.
\end{equation}
For $N=\lfloor y/\sqrt{2\varepsilon}\rfloor$, this
and \eqref{e:oneminustheta} imply that
%
%e8.26 #&#
\begin{equation}
1-\hat\theta^N(\varepsilon)\sim\sqrt{2\varepsilon}
\frac{(1+\sqrt{2\varepsilon})^{2N}-1}{
(1+\sqrt{2\varepsilon})^{2N}+1} \sim\sqrt{2\varepsilon} \tanh(y),
\end{equation}
proving the first claim of the lemma. Further, for
$\varepsilon\in(0,1/2)$ there is $c\in(0,1)$ such that
$c \sqrt{2\varepsilon} \le\chi(s) -1 \le c^{-1} \sqrt{2\varepsilon}$.
Inserting this into \eqref{e:oneminustheta} implies the second claim.
\end{pf}

%le8.5 #&#
\begin{lemma}
\label{l:betagezero}
Let $\beta>0$ and set
%
%e8.27 #&#
%e8.28 #&#
\begin{eqnarray}
u(\varepsilon) &= &\varepsilon^{-1/(2+2\beta)} \log^{1/(1+\beta)}\bigl(
\varepsilon^{-1}\bigr),
\\
v(N,\varepsilon)&=& \frac{2\beta N^{1+2\beta}\log N}{
N^{2+2\beta}+2\beta^2 \varepsilon^{-1} \log^2 N}.
\end{eqnarray}
Then
%
%e8.29 #&#
\begin{equation}
\label{e:astheta} \sup_{u(\varepsilon)^{1/2}\le N \le u(\varepsilon)^{1+\beta/2}} \biggl|\frac{1-\hat\theta^N(\varepsilon)}{v(N,\varepsilon)} -1 \biggr|
\xrightarrow{\varepsilon\to0} 0,
\end{equation}
and there is a constant $c<\infty$ such that for all
$\varepsilon\in(0,1/2)$ and all $N$ in given regimes
%
%e8.30 #&#
\begin{equation}
\label{e:ubtheta} 1-\hat\theta^N (\varepsilon) \le \cases{
\varepsilon m\bigl(\theta^N\bigr), &\quad $N < u(\varepsilon)^{1/2}$,
\vspace *{2pt}
\cr
c g, & \quad$u(\varepsilon)^{1+
\beta/2}<N<u(\varepsilon)^{1+\beta}$,
\vspace*{2pt}
\cr
c \sqrt\varepsilon, &\quad  $N > u(\varepsilon)^{1+ \beta}$.}
\end{equation}
\end{lemma}

\begin{pf}
The first line of \eqref{e:ubtheta} follows from the fact that
$1-\hat\nu(\lambda)\le\lambda m(\nu)$ for every
probability distribution $\nu$ supported on $[0,\infty)$.

For the remaining parts of \eqref{e:ubtheta}, observe that
%
%e8.31 #&#
\begin{equation}
\label{e:thetaxichi} \hat\theta^N (-\log s) \ge\xi/\chi(s).
\end{equation}
To see that this inequality holds, it is
sufficient to replace $\hat\theta^N(-\log s)$ by the right-hand side
of \eqref{generatingfunction}, multiply the resulting inequality by the
denominator (which is always positive) and observe that
$\chi(s) \ge1 \ge\xi$. Using \eqref{e:thetaxichi},
%
%e8.32 #&#
\begin{equation}
\label{e:meziaa} 1-\hat\theta^N(-\log s) \le\bigl(\chi(s) -\xi
\bigr)/\chi(s) \le\chi(s) -\xi.
\end{equation}
From the definition \eqref{e:chi} of $\chi(s)$, it follows that
%
%e8.33 #&#
\begin{equation}
\label{e:chias} \chi(s)-1 = \frac{(1-s)(1+g) -g + \sqrt{2(1-s) -(1-s)^2 + s^2 g^2}}{
s(1+g)}.
\end{equation}
Writing $s = e^{-\varepsilon}\sim1-\varepsilon$ as $\varepsilon\to
0$, and
observing the fact that $1-\xi\sim2g$ as $N\to\infty$ (or equivalently
as $g \to0$), and inserting those into \eqref{e:chias}, we obtain that
%
%e8.34 #&#
\begin{equation}
\label{e:chibound} \chi(s) -1 \le c \biggl(\sqrt{\varepsilon} + \frac\varepsilon g
\biggr)
\end{equation}
for some sufficiently large $c$. [To see that \eqref{e:chibound}
holds, it is useful to observe that
$ \chi(s) -1 \sim\sqrt{2\varepsilon}$ when $ g^2\ll\varepsilon$,
and $\chi(s) -1\sim\frac\varepsilon g$ when
$ 1\gg g^2\gg\varepsilon$.] Going back to \eqref{e:meziaa}, this
implies that
%
%e8.35 #&#
\begin{equation}
1-\hat\theta^N(-\log s) \le c\biggl(\sqrt\varepsilon + \frac
\varepsilon g + g\biggr).
\end{equation}
Observing further that when $N=u(\varepsilon)^{1+\beta}$, then $g^2$
is comparable with $\varepsilon$, the remaining parts of
\eqref{e:ubtheta} follow.

It remains to show \eqref{e:astheta}. A simple analysis of
formula \eqref{e:chias} implies that
$\chi(s) -1 \sim\varepsilon/g$ uniformly over $N$ in
the considered regime [i.e., in the same sense as in
\eqref{e:astheta}]. In addition, $1-\xi\sim2g$, and
thus $\xi^{N-1}\sim N^{-2\beta}$, and $\chi(s)^{2N-1}\sim1$ since
$\varepsilon/g \ll N^{-1}$. Inserting these observations into
\eqref{e:oneminustheta} proves \eqref{e:astheta}.
\end{pf}

We can now proceed with the proof of
Theorem~\ref{t:phasediagramcombmodel}. From
\eqref{e:firstmomentthetattopi} and Lemma~\ref{l:moments}, it follows
that for $\beta\ge0$,
%
%e8.36 #&#
\begin{equation}
\label{eq:heavytailscombmodel} \mathbb P\bigl[m(\pi_0)\ge x\bigr]=x^{-\gamma}L(x),
\end{equation}
for $\gamma= \alpha/(1+2\beta)$ and a slowly varying function $L$.
This implies that $\mathbb E[m(\pi_0)]$ is finite for
$\alpha> 1+2\beta$, and claim (i) follows by applying Theorem~\ref
{t:BMconvergence}.

To show claim (ii), we observe that when $\alpha>1$, then Lemma~\ref
{l:moments} implies that
$m(\theta^N)^{2+\gamma}\gg m_2(\theta^N)$ as $N\to\infty$, which is
sufficient to check the assumptions of Theorem~\ref{t:FINconv}.

The line $\alpha=1,\beta>0$ requires a sharper analysis. Let
$\mathcal N(x)$
be defined by the relation
%
%e8.37 #&#
\begin{equation}
\mathcal N(x)=\inf \bigl\{N\dvtx \tfrac{1}2\bigl(3+m\bigl(
\theta^N\bigr)\bigr)\ge x \bigr\}.
\end{equation}
Then, using \eqref{e:firstmomentthetattopi},
for a constant $c>0$,
%
%e8.38 #&#
\begin{equation}
\mathbb P\bigl[m(\pi_0)\ge x\bigr] =\sum
_{N=\mathcal N(x)}^{\infty} \mathcal{Z}^{-1}N^{-2}
\sim\frac{c} {\mathcal N(x)} \qquad\mbox{as $x\to\infty$}.
\end{equation}
Therefore, the slowly varying function $L$ in
\eqref{eq:heavytailscombmodel} satisfies
%
%e8.39 #&#
\begin{equation}
\label{e:asymptoticsofl} L(x)\sim c\mathcal N(x)^{-1} x^{\gamma}\qquad
\mbox{as $x\to\infty$}.
\end{equation}
From the classical theory of convergence to stable laws (see
\cite{Whi02}, Theorem~4.5.1) it follows that the function
$d(\varepsilon)$ (defined in Remark~\ref{rk:assumptionpp}) satisfies
%
%e8.40 #&#
\begin{equation}
\label{e:asymptoticsofla} \frac{L(d(\varepsilon))}{\varepsilon d(\varepsilon)^{\gamma}} \xrightarrow{\varepsilon\to0}
C_{\gamma},
\end{equation}
where $C_{\gamma}$ is a positive constant.
Combining \eqref{e:asymptoticsofl} and \eqref{e:asymptoticsofla} implies
%
%e8.41 #&#
\begin{equation}
\label{e:asymptoticsofm-1depsilon} \mathcal N\bigl(d(\varepsilon)\bigr)\sim c \varepsilon^{-1}\qquad
\mbox{as $\varepsilon\to0$}.
\end{equation}
Therefore, recalling the definition of $\mathcal N$
and Lemma~\ref{l:moments}, we find that
%
%e8.42 #&#
\begin{equation}
d(\varepsilon)\sim \frac{3+m(\theta^{\lfloor c \varepsilon^{-1}\rfloor})}{2} \sim\frac{c'\varepsilon^{-2\beta-1}}{ \log(\varepsilon^{-1})}
\end{equation}
for some $c'>0$, and thus, using \eqref{e:secondmomomentthetatopi} and
Lemma~\ref{l:moments},
%
%e8.43 #&#
\begin{equation}
m_2\bigl(d(\varepsilon)\bigr)\sim \frac{1}{2}m_2
\bigl(\theta^{\lfloor c\varepsilon^{-1}\rfloor}\bigr) \sim\frac{c \varepsilon^{-3-4\beta}}{\log^3(\varepsilon^{-1})}.
\end{equation}
We are ready to check the condition of Theorem~\ref{t:FINconv}.
It follows from the above computations that
%
%e8.44 #&#
\begin{equation}\qquad
\label{e:criticallineonthecomb} \varepsilon d(\varepsilon)^{-2}m_2\bigl(d(
\varepsilon)\bigr) \sim c\varepsilon\cdot {\varepsilon^{4\beta+2}} { \log
\bigl(\varepsilon^{-1}\bigr)}\cdot \frac{\varepsilon^{-3-4\beta}}{\log^3(\varepsilon^{-1})} \sim c
\log^{-1}\bigl(\varepsilon^{-1}\bigr).
\end{equation}
The right-hand side of the last display converges to $0$
as $\varepsilon\to0$, which verifies the condition of Theorem~\ref{t:FINconv}, and the second part of claim (ii) follows.

For claim (iii), we need to check the assumptions of
Theorem~\ref{t:FKconv}. Using~\eqref{e:hatthetatopi} and dominated
convergence,
%
%e8.45 #&#
\begin{equation}
\label{e:poa} \Gamma(\varepsilon)=\E\bigl(1-\hat\pi_0(\varepsilon)
\bigr) \sim\frac{3\varepsilon}{2} + \frac{1}{2 \mathcal Z} \sum
_{N=1}^{\infty}N^{-1-\alpha}\bigl(1-\hat
\theta^N(\varepsilon)\bigr).
\end{equation}
We now discuss separately the cases $\beta=0 $ and
$\beta>0$.

When $\beta=0$, choosing $\delta>0$ small,
using the first claim of Lemma~\ref{l:tanh},
and the change of variables $y=\sqrt{2\varepsilon}N$ we obtain for
$\varepsilon\to0$
%
%e8.46 #&#
\begin{eqnarray}
\label{e:zxcvv}&& \sum_{N={\delta}/ {\sqrt{2\varepsilon}} }^{{1}/{(\delta\sqrt
{2\varepsilon})} }
N^{-1-\alpha}\bigl(1-\hat\theta^N(\varepsilon)\bigr)
\nonumber
\\
&&\qquad\sim \int_{\delta}^{\delta^{-1}} (2\varepsilon)^{{(1+\alpha)}/{2}}
y^{-1-\alpha}\bigl(1-\hat\theta^{\lfloor y/\sqrt{2\varepsilon}\rfloor
}(\varepsilon)\bigr) (2
\varepsilon)^{-1/2} \,dy
\\
&&\qquad\sim (2\varepsilon)^{{(1+\alpha)}/2} \int_{\delta}^{\delta^{-1}}
y^{-1-\alpha}\tanh(y)\,dy.\nonumber
\end{eqnarray}
The second claim of Lemma~\ref{l:tanh} can be then used to justify that
%
%e8.47 #&#
\begin{equation}
\sum_{N=1 /(\delta\sqrt{2\varepsilon}) }^\infty N^{-1-\alpha}\bigl(1-
\hat\theta^N(\varepsilon)\bigr) \le c \delta^\alpha
\varepsilon^{{(1+\alpha)}/2 }.
\end{equation}
Using elementary\vspace*{1pt} properties of Laplace transform and
Lemma~\ref{l:moments} it follows that
$1-\hat{\theta}^N(\varepsilon)\leq\varepsilon m(\theta
^N)=2N\varepsilon$.
Therefore,
%
%e8.48 #&#
\begin{equation}
\label{e:pob} \sum_{N=1 }^{\delta/\sqrt{2\varepsilon} }
N^{-1-\alpha}\bigl(1-\hat\theta^N(\varepsilon)\bigr) \le c
\delta^{(1-\alpha) /2}\varepsilon^{{(1+\alpha)}/2 }.
\end{equation}
As the integral on the
right-hand side of \eqref{e:zxcvv} converges,
\eqref{e:poa}--\eqref{e:pob} imply that
$\Gamma(\varepsilon)$ is regularly varying with index
$\kappa=(1+\alpha)/2$, that is, \eqref{e:qFKcondeq} and thus
\eqref{e:qFKcond} holds for $q(\varepsilon) = \varepsilon^{2/\kappa}$.

To check \eqref{e:FKsecondmoment}, we estimate
$\mathbb E[(1-\hat\pi_0(\varepsilon))^2]$ first. Splitting the sum in
the same way as for \eqref{e:poa}, using the first claim of
Lemma~\ref{l:tanh}, we obtain
%
%e8.49 #&#
\begin{eqnarray}
\label{e:opi}&& \sum_{N=\delta/\sqrt{2\varepsilon} }^{1/(\delta\sqrt
{2\varepsilon}) }
N^{-1-\alpha}\bigl(1-\hat\theta^N(\varepsilon)
\bigr)^2
\nonumber
\\[-8pt]
\\[-8pt]
\nonumber
&&\qquad \sim (2\varepsilon)^{{(2+\alpha)}/2} \int
_{\delta}^{\delta^{-1}} y^{-1-\alpha}\tanh^2(y)\,dy.
\end{eqnarray}
Using the second claim of Lemma~\ref{l:tanh} and
$1-\hat\theta^N (\varepsilon)\le2\varepsilon N$ again then implies that
%
%e8.50 #&#
\begin{eqnarray}
\label{e:opii}&& \sum_{N=1 /(\delta\sqrt{2\varepsilon}) }^\infty
N^{-1-\alpha}\bigl(1-\hat\theta^N(\varepsilon)
\bigr)^2+ \sum_{N=1 }^{\delta
/\sqrt{2\varepsilon} }
N^{-1-\alpha}\bigl(1-\hat\theta^N(\varepsilon)
\bigr)^2
\nonumber
\\[-8pt]
\\[-8pt]
\nonumber
&&\qquad \le c(\delta)\varepsilon^{{(2+\alpha)}/{2}}
\end{eqnarray}
with
$c(\delta)\to0$ as $\delta\to0$. Replacing $\varepsilon$ by
$q(\varepsilon) = \varepsilon^{2/\kappa}$ in \eqref{e:opi},
\eqref{e:opii}, we obtain
%
%e8.51 #&#
\begin{equation}
\varepsilon^{-3} \E\bigl(\bigl(1-\hat{\pi}\bigl(q(\varepsilon)\bigr)
\bigr)^2\bigr)\sim c\varepsilon^{(1-\kappa)/\kappa}\xrightarrow{
\varepsilon\to0 } 0.
\end{equation}
Hence, the second assumption of Theorem~\ref{t:FKconv} is verified and
claim (iii) is proved for $\beta=0$.

We follow similar steps in the case $\beta>0$, using the estimates from
Lemma~\ref{l:betagezero}. We first get using the first part of
\eqref{e:ubtheta} and Lemma~\ref{l:moments}
%
%e8.52 #&#
\begin{eqnarray}
\sum_{N=1}^{u(\varepsilon)^{1/2}}N^{-1-\alpha}\bigl(1-
\hat\theta ^N(\varepsilon)\bigr) &\le& c \sum
_{N=1}^{u(\varepsilon)^{1/2}}N^{-1-\alpha} \varepsilon
\frac{N^{2\beta+1}} {\beta\log N}
\nonumber
\\[-8pt]
\\[-8pt]
\nonumber
&\le&\varepsilon^{(2\beta+\alpha+3)/(2\beta
+2)}L(\varepsilon) \ll\varepsilon^{\kappa},
\end{eqnarray}
where $L$ is a slowly varying function and
$\kappa=\frac{1+\alpha}{2\beta+2}$, as in the theorem. Further, by
the second part of \eqref{e:ubtheta},
%
%e8.53 #&#
\begin{eqnarray}
\sum_{N=u(\varepsilon)^{1+\beta/2}}^{u(\varepsilon)^{1+\beta}} N^{-1-\alpha}\bigl(1-
\hat\theta^N(\varepsilon)\bigr) &\le& \sum
_{N=u(\varepsilon)^{1+\beta/2}}^{u(\varepsilon)^{1+\beta}} c N^{-2-\alpha} \beta \log N
\nonumber
\\[-8pt]
\\[-8pt]
\nonumber
&\le&\bigl(\varepsilon^\kappa\bigr)^{{(2+\beta)}/2}L (\varepsilon)\ll
\varepsilon^\kappa,
\end{eqnarray}
and by the third part of \eqref{e:ubtheta},
%
%e8.54 #&#
\begin{eqnarray}
\sum_{N=u(\varepsilon)^{1+ \beta}}^{\infty} N^{-1-\alpha}\bigl(1-
\hat\theta^N(\varepsilon)\bigr) &\le& c \sqrt\varepsilon \sum
_{N=u(\varepsilon)^{1+ \beta}}^{\infty} N^{-1-\alpha}
\nonumber
\\[-8pt]
\\[-8pt]
\nonumber
&\le&\bigl(\varepsilon^\kappa\bigr)^{{1}/{(1+\beta)} }L (\varepsilon)\ll
\varepsilon^\kappa.
\end{eqnarray}
Using \eqref{e:astheta}, we then get for the remaining part of the sum
%
%e8.55 #&#
\begin{equation}
\sum_{N= u(\varepsilon)^{1/2}}^{u(\varepsilon)^{1+\beta/2}} N^{-1-\alpha}\bigl(1-
\hat\theta^N(\varepsilon)\bigr) \sim \sum
_{N= u(\varepsilon)^{1/2}}^{u(\varepsilon)^{1+\beta/2}} \frac{2\beta N^{2\beta-\alpha }\log N}{
N^{2+2\beta}+2\beta^2 \varepsilon^{-1} \log^2 N}.
\end{equation}
Substituting $N= u(\varepsilon)y$, an easy analysis yields
%
%e8.56 #&#
\begin{equation}
\sim \int_{u(\varepsilon)^{-1/2}}^{u(\varepsilon)^{\beta/2} } \frac{2\beta u(\varepsilon)^{2\beta-\alpha+1} y^{2\beta-\alpha}
\log(u(\varepsilon) y)}{
u(\varepsilon)^{2(1+\beta)} y^{2(1+\beta)} + 2\beta^2
\varepsilon^{-1}\log^2 (u(\varepsilon) y)} \sim
\varepsilon^\kappa L(\varepsilon).
\end{equation}
Combining all the parts of the sum yields
$\Gamma(\varepsilon)= \varepsilon^\kappa L(\varepsilon)$, that is the
first assumption of Theorem~\ref{t:FKconv} is satisfied with
$q(\varepsilon) = \varepsilon^{2/\kappa} \tilde L(\varepsilon)$.

Analogously, it can be shown that
%
%e8.57 #&#
\begin{eqnarray}
&&\sum_{N= u(\varepsilon)^{1/2}}^{u(\varepsilon)^{1+\beta/2}} N^{-1-\alpha}\bigl(1-
\hat\theta^N(\varepsilon)\bigr)^2
\nonumber
\\
&&\qquad\sim \int_{u(\varepsilon)^{-1/2}}^{u(\varepsilon)^{\beta/2} } \frac{4\beta^2 u(\varepsilon)^{4\beta-\alpha+2} y^{4\beta-\alpha+1}
\log^2(u(\varepsilon) y)}{
(u(\varepsilon)^{2(1+\beta)} y^{2(1+\beta)} + 2\beta^2
\varepsilon^{-1}\log^2 (u(\varepsilon) y))^2}
\\
&&\qquad\sim \varepsilon^{{(2+\alpha)}/ {(2\beta+2)}} L(\varepsilon),\nonumber
\end{eqnarray}
where $L$ is a slowly varying function at $\varepsilon=0$.
Hence,
%
%e8.58 #&#
\begin{eqnarray}
\sum_{N=1}^{u(\varepsilon)^{1/2}} N^{-1-\alpha}\bigl(1-
\hat\theta^N(\varepsilon)\bigr)^2&\le& c \sum
_{N=1}^{u(\varepsilon)^{1/2}}N^{-1-\alpha} \varepsilon
\frac{N^{4\beta+2}} {\beta^2 \log^2 N}
\nonumber
\\[-8pt]
\\[-8pt]
\nonumber
& =&\varepsilon^{{(4\beta+6+\alpha)}/{(4\beta+4)}}L(\varepsilon)\ll \varepsilon^{{(2+
\alpha)}/ {(2\beta+2)}},
\end{eqnarray}
where $L$ is slowly varying.
Similarly,
%
%e8.59 #&#
\begin{eqnarray}
&&\sum_{N=u(\varepsilon)^{1+\beta/2}}^{u(\varepsilon)^{1+\beta}} N^{-1-\alpha}\bigl(1-
\hat\theta^N(\varepsilon)\bigr)^2 \nonumber\\
&&\qquad\le \sum
_{N=u(\varepsilon)^{1+\beta/2}}^{u(\varepsilon)^{1+\beta}} c N^{-3-\alpha} \beta^2
\log^2 N
\\
&&\qquad\le\varepsilon^{{((1+\beta/2) (2+\alpha))}/ {(2+2\beta)}}L (
\varepsilon)\ll \varepsilon^{{(2+\alpha)}/ {(2+2\beta)}},\nonumber
\end{eqnarray}
where $L$ is slowly varying.
Finally,
%
%e8.60 #&#
\begin{eqnarray}
\sum_{N=u(\varepsilon)^{1+ \beta}}^{\infty} N^{-1-\alpha}\bigl(1-
\hat\theta^N(\varepsilon)\bigr)^2 &\le &c \varepsilon \sum
_{N=u(\varepsilon)^{1+ \beta}}^{\infty} N^{-1-\alpha}
\nonumber
\\[-8pt]
\\[-8pt]
\nonumber
&\le &c \varepsilon^{{(2\alpha+1)}/{(2\alpha)}} \ll \varepsilon^{{(2+\alpha)}/ {(2
\beta+2)}}.
\end{eqnarray}
Therefore, for some $L$ slowly varying function at $\varepsilon=0$.
%
%e8.61 #&#
\begin{equation}
\varepsilon^{-3}\E\bigl(\bigl(1-\hat{\pi}\bigl(q(\varepsilon)\bigr)
\bigr)^2\bigr)=\varepsilon ^{{(1-\alpha)}/{(1+\alpha)}}L(\varepsilon)
\xrightarrow{\varepsilon\to0}0.
\end{equation}
Hence, \eqref{e:FKsecondmoment} holds
claim (iii) for $\beta>0$ then follows from Theorem~\ref{t:FKconv}. This
completes the proof.
\end{pf*}

\begin{appendix}\label{app}
%s9 #&#
\section*{Appendix: Random measures} %<<<1

In this appendix, we collect frequently used notation and recall few
known theorems from the theory of random measures.

For any Polish topological space $E$, $\mathcal B(E)$ stands for the Borel
$\sigma$-field of $E$. We write $M(E)$ for the set of positive Radon
measures on $E$, that is, for the set of positive Borel measures on $E$
that are finite over compact sets. We will endow $M(E)$ with the topology
of vague convergence. $M_1(E)$ stands for the space of probability
measures over $E$ endowed with the weak convergence.

It is a known fact (\cite{rmeasures}, Lemmas 1.4 and 4.1), that the
$\sigma$-field
$\mathcal{B}(M(E))$ coincides with the field generated by the functions
$\{\mu\mapsto\mu(A)\dvtx A\in{\mathcal B}(E)$ bounded$\}$, as
well as
with the with the $\sigma$-field generated by the functions
$\{\mu\mapsto\int_{E} f \,d\mu\dvtx f \in C_0(E) \}$.

For every measure $\nu\in M((0,\infty))$, we define its Laplace
transform $\hat\nu\in C(\mathbb{R}_+)$ as
%
%e9.1 #&#
\setcounter{equation}{0}
\begin{equation}
\label{e:Laplace} \hat{\nu}(\lambda):=\int_{\mathbb{R}_+}\exp(-\lambda
t)\nu(dt).
\end{equation}

We recall that $\mu$ is a \emph{random measure on $\mathbb H$} defined on
a probability space
$(\tilde{\Omega},\tilde{\mathcal{F}},\tilde{\mathbb{P}})$ iff
$\mu\dvtx \tilde{\Omega}\to M(\mathbb H)$ is a measurable function from the
measurable space $(\tilde{\Omega},\tilde{\mathcal F})$ to the measurable
space $(M(\mathbb H),{\mathcal B}(M(\mathbb H)))$ (see \cite{rmeasures}).
Equivalently, $\mu$ is a random measure iff
$\mu(A)\dvtx \tilde{\Omega} \to\bar{\mathbb{R}}_+$ is a measurable function
for every $A\in\mathcal{B}(\mathbb H)$. The law induced by $\mu$ on
$M(\mathbb H)$ will be denoted $P_\mu$,
%
%e9.2 #&#
\begin{equation}
\label{e:Pmu} P_\mu= \tilde{\mathbb P} \circ\mu^{-1}.
\end{equation}

Let $\mu$ be a random measure on $\mathbb H$ defined on a
probability space $(\tilde{\Omega},\tilde{\mathcal{F}},\tilde{\Pb})$
and $f\dvtx \mathbb H\to\R_+$ be a measurable function. We define Laplace
transforms
%
%e9.3 #&#
\begin{equation}
L_\mu(f)= \tilde{\mathbb E} \biggl[\exp \biggl\{-\int
_{\mathbb H}f(t)\mu(dt) \biggr\} \biggr].
\end{equation}
The following proposition is well known (see Lemma~1.7 of
\cite{rmeasures}).

%pr9.1 #&#
\begin{propositionn}\label{p:existence}
Let $(\Omega,\mathcal{F},\mathbb{P})$ be a probability space and let
$(\mu_\omega)_{\omega\in\Omega}$ be a family of random measures on
$(\tilde\Omega,\tilde{\mathcal F}, \tilde\Pb)$ indexed by
$\omega\in\Omega$. Then there
exists a probability measure $\mathcal{P}$ on $M(\mathbb H)$ given by
[recall \eqref{e:Pmu} for the notation]
%
%e9.4 #&#
\begin{equation}
\mathcal{P}(A)=\int_{\Omega}P_{\mu_\omega}(A)\mathbb{P}(d
\omega)\qquad \mbox{for each }A\in\mathcal{B}\bigl(M(\mathbb H)\bigr)
\end{equation}
if and only if the mapping $\omega\mapsto L_{\mu_{\omega}}(f)$ is
$\mathcal{F}$-measurable for each $f\in C_0(\mathbb H)$. The random
measure $\mu\dvtx \Omega\times\tilde\Omega\to M(\mathbb H)$ given by
$\mu(\omega,\tilde\omega)= \mu_\omega(\tilde\omega)$ whose
distribution is $\mathcal{P}$ is called the \emph{mixture of
$(\mu_{\omega})_{\omega\in\Omega}$ with respect to $\Pb$}.
\end{propositionn}

Let $\mu$ be a random measure. Denote
%
%e9.5 #&#
\begin{equation}
\label{e:Tmu} \mathcal{T}_\mu:=\bigl\{A\in\mathcal{B}(\mathbb H)
\dvtx \mu(\partial A)=0\ \tilde\Pb\mbox{-a.s.}\bigr\}.
\end{equation}
By a \emph{DC semiring} we shall mean a semiring
$\mathcal{U}\subset\mathcal{B}(\mathbb H)$ with the property that, for
any given $B\in\mathcal{B}(\mathbb H)$ bounded and any $\varepsilon>0$,
there exist some finite cover of $B$ by $\mathcal{U}$-sets of diameter
less than $\varepsilon$. The following is a known fact.

%pr9.2 #&#
\begin{propositionn}[(Theorem~4.2 of \cite{rmeasures})]
\label{p:kalconv}
Let $\mu$ be a random measure and suppose that $\mathcal A$ is a DC
semiring contained in $\mathcal{T}_\mu$. To prove vague convergence in
distribution of random measures $\mu^\varepsilon$ to $\mu$ as
$\varepsilon\to0$, it suffices to prove convergence in distribution of
$(\mu^{\varepsilon}(A_i))_{i\leq k}$ to $(\mu(A_i))_{i\leq k}$ as
$\varepsilon\to0$ for every finite family $(A_i)_{i\leq k}$ of bounded,
pairwise disjoints sets in $\mathcal{A}$.
\end{propositionn}

Finally, we recall here the multidimensional individual ergodic theorem.
For its proof for square domains, see, for example, \cite{georgii}, Theorem~14.A5.
The proof can be easily adapted to rectangles.

%th9.3 #&#
\begin{theoremm}[(Multidimensional ergodic theorem)]
\label{t:ergodic}
Let $(X,\mathcal{G},Q)$ be a probability space and
$\Theta=(\theta_{i,j})_{(i,j)\in\Z^2}$ be a group of $Q$ preserving
transformations on $X$ such that
$\theta_{(i_1,j_1)}\circ\theta_{(i_2,j_2)}=\theta_{(i_1+i_2,j_1+j_2)}$.
Let $\mathcal I$ be the field of $\Theta$-invariant sets,
$a\leq0 < b$ and $c\leq0<d$ be real numbers, and
$\Delta_n=[\lfloor an\rfloor,\lfloor bn\rfloor]\times[\lfloor
cn\rfloor,\lfloor dn\rfloor]$.
Then, for any $Q$-measurable $f$ with $Q(|f|)<\infty$
%
%e9.6 #&#
\begin{equation}
\lim_{n\to\infty}\frac{1}{|\Delta_n|} \sum
_{i\in\Delta_n}f\circ\theta_i=Q(f|\mathcal{I}), \qquad Q
\mbox{-a.s.}
\end{equation}
\end{theoremm}
\end{appendix}
%\bibliographystyle{amsplain} %<<<1
%%\bibliography{rtrw}

\begin{thebibliography}{30}

%b1 #&#
\bibitem{AldousCRT3}
%
\begin{barticle}[mr]
\bauthor{\bsnm{Aldous},~\bfnm{David}\binits{D.}}
(\byear{1993}).
\btitle{The continuum random tree. {III}}.
\bjournal{Ann. Probab.}
\bvolume{21}
\bpages{248--289}.
\bid{issn={0091-1798}, mr={1207226}}
\end{barticle}
%
\bptok{imsref}%
% NOT OUTPUTED:
% issn = 0091-1798
% url =
%http://links.jstor.org/sici?sici=0091-1798(199301)21:1<248:TCRTI>2.0.CO;2-1&origin=MSN
% number = 1
% coden = APBYAE
% fjournal = The Annals of Probability
\endbibitem

%b2 #&#
\bibitem{ipc}
%
\begin{barticle}[mr]
\bauthor{\bsnm{Angel},~\bfnm{Omer}\binits{O.}},
\bauthor{\bsnm{Goodman},~\bfnm{Jesse}\binits{J.}},
\bauthor{\bsnm{den Hollander},~\bfnm{Frank}\binits{F.}} \AND
\bauthor{\bsnm{Slade},~\bfnm{Gordon}\binits{G.}}
(\byear{2008}).
\btitle{Invasion percolation on regular trees}.
\bjournal{Ann. Probab.}
\bvolume{36}
\bpages{420--466}.
\bid{doi={10.1214/07-AOP346}, issn={0091-1798}, mr={2393988}}
\end{barticle}
%
\bptok{imsref}%
% NOT OUTPUTED:
% issn = 0091-1798
% url = http://dx.doi.org/10.1214/07-AOP346
% number = 2
% coden = APBYAE
% fjournal = The Annals of Probability
\endbibitem

%b3 #&#
\bibitem{BarlowKumagai2006}
%
\begin{barticle}[mr]
\bauthor{\bsnm{Barlow},~\bfnm{Martin~T.}\binits{M.~T.}} \AND
\bauthor{\bsnm{Kumagai},~\bfnm{Takashi}\binits{T.}}
(\byear{2006}).
\btitle{Random walk on the incipient infinite cluster on trees}.
\bjournal{Illinois J. Math.}
\bvolume{50}
\bpages{33--65 (electronic)}.
\bid{issn={0019-2082}, mr={2247823}}
\end{barticle}
%
\bptok{imsref}%
% NOT OUTPUTED:
% issn = 0019-2082
% isbn = 0-9746986-1-X
% url = http://projecteuclid.org/euclid.ijm/1258059469
% number = 1-4
% coden = IJMTAW
% fjournal = Illinois Journal of Mathematics
\endbibitem

%b4 #&#
\bibitem{BenCab14}
%
\begin{bmisc}[auto:STB|2014/06/18|12:29:53]
\bauthor{\bsnm{Ben Arous},~\bfnm{G{\'e}rard}\binits{G.}} \AND
\bauthor{\bsnm{Cabezas},~\bfnm{Manuel}\binits{M.}}
(\byear{2014}).
\bhowpublished{Scaling limits for the random walks on the incipient infinite
cluster and invasion percolation cluster on regular trees. Preprint.}
\end{bmisc}
%
\bptok{imsref}%
% NOT OUTPUTED:
% sortkey = Gerard
% howpublished = G\'erard Ben Arous and ,
%\emph{
\endbibitem

%b5 #&#
\bibitem{BC05}
%
\begin{barticle}[mr]
\bauthor{\bsnm{Ben Arous},~\bfnm{G{\'e}rard}\binits{G.}} \AND
\bauthor{\bsnm{{\v{C}}ern{\'y}},~\bfnm{Ji{\v{r}}{\'{\i}}}\binits{J.}}
(\byear{2005}).
\btitle{Bouchaud's model exhibits two different aging regimes in
dimension one}.
\bjournal{Ann. Appl. Probab.}
\bvolume{15}
\bpages{1161--1192}.
\bid{doi={10.1214/105051605000000124}, issn={1050-5164}, mr={2134101}}
\end{barticle}
%
\bptok{imsref}%
% NOT OUTPUTED:
% issn = 1050-5164
% url = http://dx.doi.org/10.1214/105051605000000124
% number = 2
% fjournal = The Annals of Applied Probability
\endbibitem

%b6 #&#
\bibitem{bcnotes}
%
\begin{bincollection}[mr]
\bauthor{\bsnm{Ben Arous},~\bfnm{G{\'e}rard}\binits{G.}} \AND
\bauthor{\bsnm{{\v{C}}ern{\'y}},~\bfnm{Ji{\v{r}}{\'{\i}}}\binits{J.}}
(\byear{2006}).
\btitle{Dynamics of trap models}.
In \bbooktitle{Mathematical Statistical Physics}
\bpages{331--394}.
\bpublisher{Elsevier},
\blocation{Amsterdam}.
\bid{doi={10.1016/S0924-8099(06)80045-4}, mr={2581889}}
\end{bincollection}
%
\bptok{imsref}%
% NOT OUTPUTED:
% url = http://dx.doi.org/10.1016/S0924-8099(06)80045-4
\endbibitem

%b7 #&#
\bibitem{BC07}
%
\begin{barticle}[mr]
\bauthor{\bsnm{Ben Arous},~\bfnm{G{\'e}rard}\binits{G.}} \AND
\bauthor{\bsnm{{\v{C}}ern{\'y}},~\bfnm{Ji{\v{r}}{\'{\i}}}\binits{J.}}
(\byear{2007}).
\btitle{Scaling limit for trap models on {$\mathbb{Z}^d$}}.
\bjournal{Ann. Probab.}
\bvolume{35}
\bpages{2356--2384}.
\bid{doi={10.1214/009117907000000024}, issn={0091-1798}, mr={2353391}}
\end{barticle}
%
\bptok{imsref}%
% NOT OUTPUTED:
% issn = 0091-1798
% url = http://dx.doi.org/10.1214/009117907000000024
% number = 6
% coden = APBYAE
% fjournal = The Annals of Probability
\endbibitem

%b8 #&#
\bibitem{BC08}
%
\begin{barticle}[mr]
\bauthor{\bsnm{Ben Arous},~\bfnm{G{\'e}rard}\binits{G.}} \AND
\bauthor{\bsnm{{\v{C}}ern{\'y}},~\bfnm{Ji{\v{r}}{\'{\i}}}\binits{J.}}
(\byear{2008}).
\btitle{The arcsine law as a universal aging scheme for trap models}.
\bjournal{Comm. Pure Appl. Math.}
\bvolume{61}
\bpages{289--329}.
\bid{doi={10.1002/cpa.20177}, issn={0010-3640}, mr={2376843}}
\end{barticle}
%
\bptok{imsref}%
% NOT OUTPUTED:
% issn = 0010-3640
% url = http://dx.doi.org/10.1002/cpa.20177
% number = 3
% coden = CPAMA
% fjournal = Communications on Pure and Applied Mathematics
\endbibitem

%b9 #&#
\bibitem{BCM06}
%
\begin{barticle}[mr]
\bauthor{\bsnm{Ben Arous},~\bfnm{G{\'e}rard}\binits{G.}},
\bauthor{\bsnm{{\v{C}}ern{\'y}},~\bfnm{Ji{\v{r}}{\'{\i}}}\binits
{J.}} \AND
\bauthor{\bsnm{Mountford},~\bfnm{Thomas}\binits{T.}}
(\byear{2006}).
\btitle{Aging in two-dimensional {B}ouchaud's model}.
\bjournal{Probab. Theory Related Fields}
\bvolume{134}
\bpages{1--43}.
\bid{doi={10.1007/s00440-004-0408-1}, issn={0178-8051}, mr={2221784}}
\end{barticle}
%
\bptok{imsref}%
% NOT OUTPUTED:
% issn = 0178-8051
% url = http://dx.doi.org/10.1007/s00440-004-0408-1
% number = 1
% coden = PTRFEU
% fjournal = Probability Theory and Related Fields
\endbibitem

%b10 #&#
\bibitem{borodin}
%
\begin{barticle}[mr]
\bauthor{\bsnm{Borodin},~\bfnm{A.~N.}\binits{A.~N.}}
(\byear{1987}).
\btitle{A weak invariance principle for local times}.
\bjournal{Zap. Nauchn. Sem. Leningrad. Otdel. Mat. Inst. Steklov. (LOMI)}
\bvolume{158}
\bpages{14--31, 169}.
\bid{doi={10.1007/BF01129888}, issn={0373-2703}, mr={0907006}}
\end{barticle}
%
\bptok{imsref}%
% NOT OUTPUTED:
% issn = 0373-2703
% url = http://dx.doi.org/10.1007/BF01129888
% number = Probl. Teor. Veroyatn. Raspred. X
% fjournal = Zapiski Nauchnykh Seminarov Leningradskogo Otdeleniya
%Matematicheskogo Instituta imeni V. A. Steklova Akademii Nauk SSSR
%(LOMI)
\endbibitem

%b11 #&#
\bibitem{weakergodicitybreaking}
%
\begin{barticle}[auto:STB|2014/06/18|12:29:53]
\bauthor{\bsnm{Bouchaud},~\bfnm{J.-P.}\binits{J.-P.}}
(\byear{1992}).
\btitle{Weak ergodicity breaking and aging in disordered systems}.
\bjournal{J. Phys.~I (France)}
\bvolume{2}
\bpages{1705--1713}.
\end{barticle}
%
\bptok{imsref}%
\endbibitem

%b12 #&#
\bibitem{phys}
%
\begin{bbook}[auto:STB|2014/06/18|12:29:53]
\bauthor{\bsnm{Bouchaud},~\bfnm{J.-P.}\binits{J.-P.}},
\bauthor{\bsnm{Cugliandolo},~\bfnm{L.}\binits{L.}},
\bauthor{\bsnm{Kurchan},~\bfnm{J.}\binits{J.}} \AND
\bauthor{\bsnm{Mezard},~\bfnm{M.}\binits{M.}}
(\byear{1998}).
\btitle{Out of Equilibrium Dynamics in Spin-Glasses and Other Glassy Systems}.
\bpublisher{World Scientific},
\blocation{Singapore}.
\end{bbook}
%
\bptok{imsref}%
\endbibitem

%b13 #&#
\bibitem{bouchaud-1995}
%
\begin{barticle}[auto:STB|2014/06/18|12:29:53]
\bauthor{\bsnm{Bouchaud},~\bfnm{J.-P.}\binits{J.-P.}} \AND
\bauthor{\bsnm{Dean},~\bfnm{D.~S.}\binits{D.~S.}}
(\byear{1995}).
\btitle{Aging on Parisi's tree}.
\bjournal{J. Phys.~I (France)}
\bvolume{5}
\bpages{265}.
\end{barticle}
%
\bptok{imsref}%
\endbibitem

%b14 #&#
\bibitem{BovierFaggionato05}
%
\begin{barticle}[mr]
\bauthor{\bsnm{Bovier},~\bfnm{Anton}\binits{A.}} \AND
\bauthor{\bsnm{Faggionato},~\bfnm{Alessandra}\binits{A.}}
(\byear{2005}).
\btitle{Spectral characterization of aging: The REM-like trap model}.
\bjournal{Ann. Appl. Probab.}
\bvolume{15}
\bpages{1997--2037}.
\bid{doi={10.1214/105051605000000359}, issn={1050-5164}, mr={2152251}}
\end{barticle}
%
\bptok{imsref}%
% NOT OUTPUTED:
% issn = 1050-5164
% url = http://dx.doi.org/10.1214/105051605000000359
% number = 3
% fjournal = The Annals of Applied Probability
\endbibitem

%b15 #&#
\bibitem{Cerny06}
%
\begin{barticle}[mr]
\bauthor{\bsnm{{\v{C}}ern{\'y}},~\bfnm{Ji{\v{r}}{\'{\i}}}\binits{J.}}
(\byear{2006}).
\btitle{The behaviour of aging functions in one-dimensional
{B}ouchaud's trap model}.
\bjournal{Comm. Math. Phys.}
\bvolume{261}
\bpages{195--224}.
\bid{doi={10.1007/s00220-005-1447-x}, issn={0010-3616}, mr={2193209}}
\end{barticle}
%
\bptok{imsref}%
% NOT OUTPUTED:
% issn = 0010-3616
% url = http://dx.doi.org/10.1007/s00220-005-1447-x
% number = 1
% coden = CMPHAY
% fjournal = Communications in Mathematical Physics
\endbibitem

%b16 #&#
\bibitem{rwrt}
%
\begin{barticle}[mr]
\bauthor{\bsnm{Croydon},~\bfnm{David}\binits{D.}}
(\byear{2008}).
\btitle{Convergence of simple random walks on random discrete trees to
{B}rownian motion on the continuum random tree}.
\bjournal{Ann. Inst. Henri Poincar\'e Probab. Stat.}
\bvolume{44}
\bpages{987--1019}.
\bid{doi={10.1214/07-AIHP153}, issn={0246-0203}, mr={2469332}}
\end{barticle}
%
\bptok{imsref}%
% NOT OUTPUTED:
% issn = 0246-0203
% url = http://dx.doi.org/10.1214/07-AIHP153
% number = 6
% fjournal = Annales de l'Institut Henri Poincar\'e Probabilit\'es et
%Statistiques
\endbibitem

%b17 #&#
\bibitem{fin02}
%
\begin{barticle}[mr]
\bauthor{\bsnm{Fontes},~\bfnm{L.~R.~G.}\binits{L.~R.~G.}},
\bauthor{\bsnm{Isopi},~\bfnm{M.}\binits{M.}} \AND
\bauthor{\bsnm{Newman},~\bfnm{C.~M.}\binits{C.~M.}}
(\byear{2002}).
\btitle{Random walks with strongly inhomogeneous rates and singular
diffusions: Convergence, localization and aging in one dimension}.
\bjournal{Ann. Probab.}
\bvolume{30}
\bpages{579--604}.
\bid{doi={10.1214/aop/1023481003}, issn={0091-1798}, mr={1905852}}
\end{barticle}
%
\bptok{imsref}%
% NOT OUTPUTED:
% issn = 0091-1798
% url = http://dx.doi.org/10.1214/aop/1023481003
% number = 2
% coden = APBYAE
% fjournal = The Annals of Probability
\endbibitem

%b18 #&#
\bibitem{georgii}
%
\begin{bbook}[mr]
\bauthor{\bsnm{Georgii},~\bfnm{Hans-Otto}\binits{H.-O.}}
(\byear{1988}).
\btitle{Gibbs Measures and Phase Transitions}.
\bseries{de Gruyter Studies in Mathematics}
\bvolume{9}.
\bpublisher{de Gruyter},
\blocation{Berlin}.
\bid{doi={10.1515/9783110850147}, mr={0956646}}
\end{bbook}
%
\bptok{imsref}%
% NOT OUTPUTED:
% isbn = 0-89925-462-4
% url = http://dx.doi.org/10.1515/9783110850147
% fpage = xiv+525
\endbibitem

%b19 #&#
\bibitem{gnedenko}
%
\begin{bbook}[mr]
\bauthor{\bsnm{Gnedenko},~\bfnm{B.~V.}\binits{B.~V.}} \AND
\bauthor{\bsnm{Kolmogorov},~\bfnm{A.~N.}\binits{A.~N.}}
(\byear{1968}).
\btitle{Limit Distributions for Sums of Independent Random Variables}.
%\bseries{Translated from the Russian, Annotated, and Revised by K. L.
%Chung. With Appendices by J. L. Doob and P. L. Hsu. Revised Edition}.
\bpublisher{Addison-Wesley},
\blocation{Reading, MA}.
\bid{mr={0233400}}
\end{bbook}
%
\bptok{imsref}%
% NOT OUTPUTED:
% fpage = ix+293
\endbibitem

%b20 #&#
\bibitem{JaraLandimTexeira11}
%
\begin{barticle}[mr]
\bauthor{\bsnm{Jara},~\bfnm{Milton}\binits{M.}},
\bauthor{\bsnm{Landim},~\bfnm{Claudio}\binits{C.}} \AND
\bauthor{\bsnm{Teixeira},~\bfnm{Augusto}\binits{A.}}
(\byear{2011}).
\btitle{Quenched scaling limits of trap models}.
\bjournal{Ann. Probab.}
\bvolume{39}
\bpages{176--223}.
\bid{doi={10.1214/10-AOP554}, issn={0091-1798}, mr={2778800}}
\end{barticle}
%
\bptok{imsref}%
% NOT OUTPUTED:
% issn = 0091-1798
% url = http://dx.doi.org/10.1214/10-AOP554
% number = 1
% coden = APBYAE
% fjournal = The Annals of Probability
\endbibitem

%b21 #&#
\bibitem{rmeasures}
%
\begin{bbook}[mr]
\bauthor{\bsnm{Kallenberg},~\bfnm{Olav}\binits{O.}}
(\byear{1983}).
\btitle{Random Measures},
\bedition{3rd} ed.
\bpublisher{Akademie-Verlag},
\blocation{Berlin}.
\bid{mr={0818219}}
\end{bbook}
%
\bptok{imsref}%
% NOT OUTPUTED:
% isbn = 0-12-394960-2
% fpage = 187
\endbibitem

%b22 #&#
\bibitem{Kal90}
%
\begin{barticle}[mr]
\bauthor{\bsnm{Kallenberg},~\bfnm{Olav}\binits{O.}}
(\byear{1990}).
\btitle{Exchangeable random measures in the plane}.
\bjournal{J. Theoret. Probab.}
\bvolume{3}
\bpages{81--136}.
\bid{doi={10.1007/BF01063330}, issn={0894-9840}, mr={1031426}}
\end{barticle}
%
\bptok{imsref}%
% NOT OUTPUTED:
% issn = 0894-9840
% url = http://dx.doi.org/10.1007/BF01063330
% number = 1
% coden = JTPREO
% fjournal = Journal of Theoretical Probability
\endbibitem

%b23 #&#
\bibitem{Kal02}
%
\begin{bbook}[mr]
\bauthor{\bsnm{Kallenberg},~\bfnm{Olav}\binits{O.}}
(\byear{2002}).
\btitle{Foundations of Modern Probability},
\bedition{2nd} ed.
%\bseries{Probability and Its Applications (New York)}.
\bpublisher{Springer},
\blocation{New York}.
\bid{doi={10.1007/978-1-4757-4015-8}, mr={1876169}}
\end{bbook}
%
\bptok{imsref}%
% NOT OUTPUTED:
% isbn = 0-387-95313-2
% url = http://dx.doi.org/10.1007/978-1-4757-4015-8
% fpage = xx+638
\endbibitem

%b24 #&#
\bibitem{Kal05}
%
\begin{bbook}[mr]
\bauthor{\bsnm{Kallenberg},~\bfnm{Olav}\binits{O.}}
(\byear{2005}).
\btitle{Probabilistic Symmetries and Invariance Principles}.
%\bseries{Probability and Its Applications (New York)}.
\bpublisher{Springer},
\blocation{New York}.
\bid{mr={2161313}}
\end{bbook}
%
\bptok{imsref}%
% NOT OUTPUTED:
% isbn = 978-0387-25115-8; 0-387-25115-4
% fpage = xii+510
\endbibitem

%b25 #&#
\bibitem{kesten}
%
\begin{barticle}[mr]
\bauthor{\bsnm{Kesten},~\bfnm{Harry}\binits{H.}}
(\byear{1986}).
\btitle{Subdiffusive behavior of random walk on a random cluster}.
\bjournal{Ann. Inst. Henri Poincar\'e Probab. Stat.}
\bvolume{22}
\bpages{425--487}.
\bid{issn={0246-0203}, mr={0871905}}
\end{barticle}
%
\bptok{imsref}%
% NOT OUTPUTED:
% issn = 0246-0203
% url = http://www.numdam.org/item?id=AIHPB_1986__22_4_425_0
% number = 4
% coden = AHPBAR
% fjournal = Annales de l'Institut Henri Poincar\'e. Probabilit\'es et
%Statistique
\endbibitem

%b26 #&#
\bibitem{mula}
%
\begin{barticle}[mr]
\bauthor{\bsnm{Meerschaert},~\bfnm{Mark~M.}\binits{M.~M.}} \AND
\bauthor{\bsnm{Scheffler},~\bfnm{Hans-Peter}\binits{H.-P.}}
(\byear{2004}).
\btitle{Limit theorems for continuous-time random walks with infinite
mean waiting times}.
\bjournal{J. Appl. Probab.}
\bvolume{41}
\bpages{623--638}.
\bid{issn={0021-9002}, mr={2074812}}
\end{barticle}
%
\bptok{imsref}%
% NOT OUTPUTED:
% issn = 0021-9002
% number = 3
% coden = JPRBAM
% fjournal = Journal of Applied Probability
\endbibitem

%b27 #&#
\bibitem{montrollweiss}
%
\begin{barticle}[mr]
\bauthor{\bsnm{Montroll},~\bfnm{Elliott~W.}\binits{E.~W.}} \AND
\bauthor{\bsnm{Weiss},~\bfnm{George~H.}\binits{G.~H.}}
(\byear{1965}).
\btitle{Random walks on lattices. {II}}.
\bjournal{J. Math. Phys.}
\bvolume{6}
\bpages{167--181}.
\bid{issn={0022-2488}, mr={0172344}}
\end{barticle}
%
\bptok{imsref}%
% NOT OUTPUTED:
% issn = 0022-2488
% fjournal = Journal of Mathematical Physics
\endbibitem

%b28 #&#
\bibitem{Mourrat11}
%
\begin{barticle}[mr]
\bauthor{\bsnm{Mourrat},~\bfnm{Jean-Christophe}\binits{J.-C.}}
(\byear{2011}).
\btitle{Scaling limit of the random walk among random traps on
{$\mathbb{Z}^d$}}.
\bjournal{Ann. Inst. Henri Poincar\'e Probab. Stat.}
\bvolume{47}
\bpages{813--849}.
\bid{doi={10.1214/10-AIHP387}, issn={0246-0203}, mr={2841076}}
\end{barticle}
%
\bptok{imsref}%
% NOT OUTPUTED:
% issn = 0246-0203
% url = http://dx.doi.org/10.1214/10-AIHP387
% number = 3
% fjournal = Annales de l'Institut Henri Poincar\'e Probabilit\'es et
%Statistiques
\endbibitem

%b29 #&#
\bibitem{stone}
%
\begin{barticle}[mr]
\bauthor{\bsnm{Stone},~\bfnm{Charles}\binits{C.}}
(\byear{1963}).
\btitle{Limit theorems for random walks, birth and death processes,
and diffusion processes}.
\bjournal{Illinois J. Math.}
\bvolume{7}
\bpages{638--660}.
\bid{issn={0019-2082}, mr={0158440}}
\end{barticle}
%
\bptok{imsref}%
% NOT OUTPUTED:
% issn = 0019-2082
% fjournal = Illinois Journal of Mathematics
\endbibitem

%b30 #&#
\bibitem{Whi02}
%
\begin{bbook}[mr]
\bauthor{\bsnm{Whitt},~\bfnm{Ward}\binits{W.}}
(\byear{2002}).
\btitle{Stochastic-Process Limits: An Introduction to
Stochastic-Process Limits and Their Application to Queues}.
%\bseries{Springer Series in Operations Research}.
\bpublisher{Springer},
\blocation{New York}.
\bid{mr={1876437}}
\end{bbook}
%
\bptok{imsref}%
% NOT OUTPUTED:
% isbn = 0-387-95358-2
% fpage = xxiv+602
\endbibitem
\end{thebibliography}
%
%\def\cprime{$'$}
%\newcommand{\bysame}{\leavevmode\mbox to3em{\hrulefill}\thinspace}
%\newcommand{\MR}{\relax\ifhmode\unskip\space\fi MR }
%% \MRhref is called by the amsart/book/proc definition of \MR.
%\newcommand{\MRhref}[2]{%
% \href{http://www.ams.org/mathscinet-getitem?mr=#1}{#2}
%}
%\newcommand{\href}[2]{#2}
% imsref loaded by akundreckaite, 2014-07-14 15:18:33
%
% imsref loaded by akundreckaite, 2014-07-14 15:45:46

%\begin{appendix}
%\section{}
%\end{appendix}

% zodis "Acknowledgments" paliekamas pagal autoriu
%\section*{Acknowledgments}

%\begin{supplement}[id=suppA]
%\sname{Supplement A}
%\stitle{}
%\slink[doi]{10.1214/00-AOPXXXXSUPP} %[doi,text={...}] - jei reikia
%suskaldyti doi
%\sdatatype{.pdf}
%\sfilename{aopXXXX\_supp.pdf}
%\sdescription{}
%\end{supplement}

%\begin{thebibliography}{99}
%\bibitem[\protect\citeauthoryear{}{}]{r1}
%\bibitem{r1}
%\end{thebibliography}

\printaddresses
\end{document}